\newtheorem{theorem}{Theorem}[section]
\newtheorem{lemma}[theorem]{Lemma}
\newtheorem{question}[theorem]{Question}
\newtheorem{proposition}[theorem]{Proposition}
\newenvironment{proof}[1][Proof]{\noindent\textbf{#1.} }{\ \rule{0.5em}{0.5em}}
\begin{document}
\title{Homogeneous Finsler spaces with only one orbit of prime closed geodesics\thanks{Supported by NSFC (No.  11771331) and Beijing Natural Science Foundation
(No.  1182006).}}
\author{Ming Xu
\\ \\
School of Mathematical Sciences\\
Capital Normal University\\
Beijing 100048, P. R. China\\
Email:mgmgmgxu@163.com
}
\date{}
\maketitle
\begin{abstract}
When a closed Finsler manifold admits continuous isometric actions, estimating the number of orbits of prime closed geodesics seems a more reasonable substitution for estimating the number of prime closed
geodesics. To generalize the works of H. Duan, Y. Long, H.B. Rademacher, W. Wang and others on the existence of two prime closed geodesics to the equivariant situation, we purpose the question if a closed Finsler manifold has only one orbit of prime closed geodesic if and only if it is a compact rank-one Riemannian symmetric space. In this paper, we study this problem in homogeneous Finsler geometry, and get a positive answer
when the dimension is even or the metric is reversible. We guess the rank inequality and algebraic techniques in this paper may continue to play an important role
for discussing our question in the non-homogeneous situation.

{\bf Key words:} Homogeneous Finsler space, closed geodesic, compact rank-one symmetric space, connected isometry group, Killing vector field.

{\bf MSC(2010):} 53C60, 53C30, 53C22.
\end{abstract}

\section{Introduction}

In Riemannian geometry, it has been conjectured for many decades
that on any closed manifold $M$ with $\dim M>1$, there exist
infinitely many prime closed geodesics. In Finsler geometry, this is not true
because of the Katok spheres \cite{Katok-1973} found in 1973. Katok spheres are Randers spheres of constant flag curvature which were much recently classified in \cite{BRS-2004}. Based on the
Katok spheres, D.V. Anozov purposed another conjecture, claiming the existence of $2[\frac{n+1}{2}]$ prime closed geodesics on
the Finsler sphere $(S^n,F)$ \cite{An-1974}. See  \cite{BL-2010} and \cite{Wa2012} for some recent progress on this conjecture.

Generally speaking, finding the first prime closed geodesic
on a compact Finsler manifold is relatively easy (see \cite{Fe1965} or \cite{Kl-1978}).
Finding the second is already a hard problem if no topological
obstacle from \cite{GM-1969} and \cite{PS-1976} is accessible. It
was relatively recent that H. Duan and Y. Long \cite{DL-2007} and H.B. Rademacher \cite{Ra-2010}\cite{Ra-2017} provided different proofs of the following theorem.
\begin{theorem}\label{cite-thm-1}
A bumpy and irreversible Finsler metric on a sphere $S^n$
of dimension $n\geq 3$ carries two prime closed geodesics.
\end{theorem}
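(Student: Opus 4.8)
The plan is to argue by contradiction inside $S^1$-equivariant Morse theory on the free loop space $\Lambda=\Lambda S^n$. Recall that the closed geodesics of $(S^n,F)$ are precisely the critical $S^1$-orbits of the energy functional $E\colon\Lambda\to\mathbb R$ lying strictly above the manifold $\Lambda^0$ of constant loops, and that at least one prime closed geodesic $c$ exists by Fet's theorem (\cite{Fe1965}); suppose $c$ were the only one. Since $F$ is bumpy, every iterate $c^m$, $m\ge1$, is a non-degenerate critical orbit, and $\{c^m:m\ge1\}$ together with $\Lambda^0$ exhausts $\mathrm{Crit}(E)$. I would then invoke two external facts. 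First, the rational $S^1$-equivariant homology $H^{S^1}_*(\Lambda S^n,\Lambda^0;\mathbb Q)$ has been computed (Rademacher): its Poincar\'e series $\overline P(t)=\sum_k\overline b_k t^k$ has infinitely many non-zero coefficients, with an eventually periodic pattern whose precise shape depends sharply on the parity of $n$. Second, at a non-degenerate critical orbit $S^1\cdot c^m$ the local $S^1$-equivariant homology is $\mathbb Q$ concentrated in degree $i(c^m)$ when $c^m$ is a good iterate (a parity condition on $i(c^m)$ relative to $i(c)$) and is zero otherwise.

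The engine is index iteration theory. By Bott's iteration formula the indices $i(c^m)$ are governed by the mean index $\hat i(c)=\lim_{m\to\infty}i(c^m)/m$, with the uniform window $|i(c^m)-m\hat i(c)|\le n-1$, and the good iterates form a subset of $\mathbb N$ of positive density. If $\hat i(c)=0$ then all $i(c^m)$ lie in the bounded range $[0,n-1]$, so the total Morse contribution of $c$ is a polynomial; this already contradicts the infinitude of non-zero $\overline b_k$ forced by the equivariant Morse inequalities, so $\hat i(c)>0$. The heart of the matter is the quantitative comparison that follows. Via the equivariant Morse inequalities --- equivalently, through Rademacher's resonance identity $\sum_{\text{prime }c_j}\hat\chi(c_j)/\hat i(c_j)=B(n)$, where $\hat\chi$ is the average Euler characteristic assembled from the local homologies and $B(n)>0$ is the average of the $\overline b_k$ extracted from $\overline P(t)$ --- the single geodesic $c$ is forced to satisfy $\hat\chi(c)/\hat i(c)=B(n)$. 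Combined with the elementary estimate $|\hat\chi(c)|\le1$ and a lower bound for $\hat i(c)$ valid on $(S^n,F)$ when $n\ge3$ and $F$ is irreversible (the setting that also makes the $S^1$-, rather than $O(2)$-, equivariant theory the relevant one), this forces $B(n)=\hat\chi(c)/\hat i(c)$ to be strictly below its true value in the odd-dimensional case --- the contradiction. In the even-dimensional case $B(n)$ is much smaller and the averaged identity no longer suffices; there I would instead match the precise periodic shape of $\overline P(t)$ against the almost-arithmetic sequence $\{i(c^m)\}_m$ carrying its good/bad labels, and show that no single such sequence can reproduce $H^{S^1}_*(\Lambda S^n,\Lambda^0;\mathbb Q)$ modulo $(1+t)$-torsion.

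The main obstacle is precisely this last comparison: extracting from the global equivariant Morse inequalities enough rigidity on the single labelled sequence $\{(i(c^m),\varepsilon(m))\}_m$ to contradict the known homology of $\Lambda S^n/S^1$. It rests on having all the sharp inputs of index iteration in hand at once --- Bott's formula and the $n-1$ window, the positive density of good iterates, the bound $|\hat\chi(c)|\le1$, and, most delicately, a good enough lower estimate for the mean index $\hat i(c)$ on an irreversible Finsler sphere --- and on handling the odd case (resonance identity) and the even case (periodic pattern matching) separately. The remaining ingredients are by now standard equivariant Morse theory on loop spaces.
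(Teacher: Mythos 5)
First, a point of comparison: this paper does not prove Theorem \ref{cite-thm-1} at all; it is quoted as background and attributed to Duan--Long \cite{DL-2007} and Rademacher \cite{Ra-2010}\cite{Ra-2017}. So your proposal can only be measured against those cited proofs, and what you have written is a reconstruction of their general strategy ($S^1$-equivariant Morse theory on $\Lambda S^n$, nondegeneracy from bumpiness, local equivariant homology concentrated in degree $i(c^m)$ for good iterates, Bott's iteration bounds $|i(c^m)-m\hat i(c)|\le n-1$, and Rademacher's resonance identity from \cite{Ra-1989}). Those ingredients are correctly assembled, and the easy reduction (if $\hat i(c)=0$ the indices are bounded, contradicting infinitely many nonzero equivariant Betti numbers) is fine.

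However, there is a genuine gap where the theorem actually lives. First, the input you lean on in the odd case --- ``a lower bound for the mean index $\hat i(c)$ valid on an irreversible Finsler sphere'' --- does not exist in the bumpy setting without curvature/reversibility pinching hypotheses (such bounds, as in Rademacher's positively-curved results, require assumptions absent here). In the actual proofs the logic runs the other way: with a single prime geodesic the resonance identity pins down $\hat i(c)$ exactly as $\hat\chi(c)/B(n)$, and the contradiction is then extracted by a detailed, case-by-case confrontation of the resulting labelled index sequence $\{(i(c^m),\varepsilon(m))\}$ (via Bott's formula, or the precise iteration formulas and the classification of the possible linearized Poincar\'e maps) with the known degreewise equivariant homology of $(\Lambda S^n,\Lambda^0;\mathbb{Q})$. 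Second, you explicitly defer exactly this confrontation --- ``show that no single such sequence can reproduce $H^{S^1}_*(\Lambda S^n,\Lambda^0;\mathbb{Q})$'' --- in the even case, and implicitly in the odd case as well; but that matching argument is the substance of the Duan--Long and Rademacher papers, not a routine verification. As it stands, the proposal is a correct road map with the decisive steps missing, so it does not yet constitute a proof.
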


More generally, when $S^n$ is changed to other compact manifold,
H. Duan, Y. Long and W. Wang proved the following theorem in
\cite{DLW-2016}.
\begin{theorem}\label{cite-thm-2}
There exist always at least two prime closed geodesics on
every compact simply connected bumpy irreversible Finsler manifold
$(M,F)$.
\end{theorem}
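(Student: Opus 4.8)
The plan is to argue by contradiction via $S^1$-equivariant Morse theory on the free loop space together with Long's index iteration theory; the sphere case is already Theorem~\ref{cite-thm-1}, and the task is to run the argument over an arbitrary compact simply connected $M$. Let $\Lambda M=W^{1,2}(S^1,M)$ with energy functional $E$: it is $S^1$-invariant, satisfies Palais--Smale, and its nonconstant critical orbits are exactly the circles $S^1\cdot c^m$ of iterates of prime closed geodesics. Suppose $(M,F)$ carries a unique prime closed geodesic $c$. Since $F$ is bumpy, every $c^m$ is a nondegenerate critical orbit, so the local $S^1$-equivariant rational homology of $S^1\cdot c^m$ is one-dimensional, concentrated in degree $i(c^m)$, and it survives into the global equivariant Morse series precisely when the negative normal bundle over the critical circle is orientable --- a condition governed by the parity of $i(c^m)$ and $i(c^{2m})$.

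First I would assemble the index-iteration input. By Bott's iteration formula $m\mapsto i(c^m)$ grows linearly with slope the average index $\hat i(c)=\lim_m i(c^m)/m$, and the nullities $\nu(c^m)$ are read off from the Bott function at $m$-th roots of unity; bumpiness makes every $c^m$ nondegenerate, which pins down $\hat i(c)>0$ (automatic for simply connected $M$) and, via the splitting of the index form over the eigenspaces of the linearized Poincar\'{e} map, excludes resonant coincidences among the $i(c^m)$. Feeding the precise index iteration formula into the common index jump theorem of Long and Zhu produces, for arbitrarily large $N$, finitely many iterates whose indices cluster tightly in a prescribed window around $2N$ while all neighbouring iterates stay outside it. The upshot is a sharp count: up to degree $q$, the geodesic $c$ and its iterates contribute at most $O(q)$ generators to $H^{S^1}_*(\Lambda M,\Lambda^0 M;\mathbb Q)$, so by the Morse inequalities the total $S^1$-equivariant Betti numbers in degrees $\le q$ are $O(q)$ as well.

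Next I would confront this with the topology of $\Lambda M$. If $M$ is not a rational-cohomology compact rank-one symmetric space, then $H^*(M;\mathbb Q)$ needs at least two ring generators, so by the Vigu\'{e}-Poirrier--Sullivan theorem the rational Betti numbers of $\Lambda M$ --- and hence the $S^1$-equivariant ones relative to the constant loops --- tend to infinity, their partial sums growing superlinearly; this already contradicts the $O(q)$ bound above. If instead $M$ is a rational-cohomology CROSS, the Betti numbers of $\Lambda M$ stay bounded and one argues through Rademacher's resonance identity, which for a single bumpy $c$ reads $\hat i(c)=\chi(c)/B(M,F)$ with $B(M,F)>0$ the mean Euler characteristic of $\Lambda M$; this pins $\hat i(c)$ to a definite rational value, and substituting it back into the common index jump theorem forces more iterates $c^m$ to survive into the Morse series, with a rigid parity/orientation pattern among their indices, than the now completely known equivariant homology of $\Lambda M$ can accommodate. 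This is exactly the mechanism behind the proofs of Theorem~\ref{cite-thm-1} by Duan--Long and Rademacher, and it is in this case that irreversibility of the bumpy metric is used essentially: the reversible symmetric metrics on CROSSes are the genuine exceptions.

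I expect the main obstacle to be this interface between the Morse-theoretic count and the topological lower bound --- turning ``the equivariant homology of $\Lambda M$ is large, with mean Euler characteristic $B(M,F)$'' into a genuine numerical contradiction with the clustered indices. This needs the full force of the precise index iteration formula and the common index jump theorem, delicate sign and orientation bookkeeping for the negative bundles over the circles $S^1\cdot c^m$, and a case split by the rational homotopy type of $M$; the rational-CROSS case is the delicate one, and it is there, and only there, that both hypotheses --- bumpy and irreversible --- are indispensable.
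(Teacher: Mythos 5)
This statement is not proved in the paper at all: it is Theorem \ref{cite-thm-2}, quoted from Duan, Long and Wang \cite{DLW-2016}, and the author's only ``proof'' is the citation. So there is no internal argument to compare your proposal against; what can be said is how your sketch relates to the published proof you are implicitly reconstructing.

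Your outline does follow the actual strategy of \cite{DLW-2016} (and of the sphere case \cite{DL-2007}, \cite{Ra-2010}): $S^1$-equivariant Morse theory on $\Lambda M$, bumpiness giving nondegenerate critical circles, Bott/precise index iteration plus the Long--Zhu common index jump theorem, the Vigu\'{e}-Poirrier--Sullivan theorem when $H^*(M;\mathbb{Q})$ has at least two generators, and Rademacher's resonance identity in the rational-CROSS case. But as it stands it is a roadmap, not a proof: the two places where you say you ``expect the main obstacle'' are exactly where the entire content of the theorem lies. Concretely, (i) the claim that the iterates of a single geodesic contribute only $O(q)$ generators up to degree $q$, with the orientation/parity bookkeeping for the negative bundles over $S^1\cdot c^m$, has to be established, not asserted; (ii) in the rational-CROSS case the passage from ``resonance identity pins $\hat i(c)$'' to a contradiction with the completely computed equivariant homology of $\Lambda M$ is a long, delicate numerical argument split by the cases $S^n$, $\mathbb{C}\mathrm{P}^n$, $\mathbb{H}\mathrm{P}^n$, $\mathbb{O}\mathrm{P}^2$ and by parities of the index data, and nothing in your sketch carries it out; and (iii) the parenthetical ``$\hat i(c)>0$ (automatic for simply connected $M$)'' is not automatic --- ruling out $\hat i(c)=0$ itself requires a separate argument (otherwise the index-jump machinery does not even start). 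So the proposal is a correct identification of the known route, but it does not constitute a proof, and within this paper the honest substitute for it is precisely the citation to \cite{DLW-2016}.
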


In this paper, we will assume the Finsler manifold $(M,F)$ admits
nontrivial continuous isometries and discuss an equivalent analog of above theorems. Some thought and technique were purposed in \cite{BFIMZ} and further developed in \cite{Xu-2018-1} and \cite{Xu-2018-2}, while studying the geodesics
in a Finsler sphere of constant curvature.

It was suggested in \cite{Xu-2018-2} that,
when the connected isometry group $G=I_0(M,F)$ has a positive dimension, estimating the number of prime closed
geodesic seems more reasonable to be switched to estimating the number of orbits of prime closed geodesics, with respect to the action of $\hat{G}=G\times S^1$ (the precise description for this action will be explained at the end of Section 2).
Though there are examples
of compact Finsler manifolds with only one orbit of prime closed geodesics, they are very rare. The only known examples are compact rank-one Riemannian symmetric spaces, i.e. $S^n$, $\mathbb{C}\mathrm{P}^n$, $\mathbb{H}\mathrm{P}^n$
and $\mathbb{O}\mathrm{P}^2$ when simply connected, and $\mathbb{R}\mathrm{P}^n$ othewise, all endowed with the their standard metrics.

Based on above observations and inspired by Theorem \ref{cite-thm-1}
and Theorem \ref{cite-thm-2}, we would like to ask
\begin{question}\label{main-question}
Assume $(M,F)$ is a closed connected Finsler manifold such that
$G=I_0(M,F)$ has a positive dimension and $(M,F)$ has only
one $\hat{G}$-orbit of prime closed geodesics. Must $M$ be
one of the compact rank-one Riemannian symmetric spaces?
\end{question}

We will show some clue for a positive answer to Question \ref{main-question}.

We assume $(M,F)$ is a Finsler manifold as
described in Question \ref{main-question}, and discuss its properties. In particular, we prove that each closed geodesic on $(M,F)$ is homogeneous, i.e. the orbit of a one-parameter subgroup in $G=I_0(M,F)$ (see Lemma \ref{lemma-2}). It implies that the set of all prime closed geodesics on $(M,F)$ is a $G$-orbit.

The union $N$ of all the closed geodesics in $M$ is crucial for discussing Question \ref{main-question}.
It is a $G$-orbit in $M$, which can be presented as $N=G/H$. Notice that the $G$-action on $N$ is almost effective.
The first important theorem we prove in this paper is the rank inequality for $N$, i.e. $\mathrm{rk}G\leq\mathrm{rk}H+1$. To be
precise, $\mathrm{rk}G=\mathrm{rk}H$ when $\dim N$ is even,
and $\mathrm{rk}G=\mathrm{rk}H+1$ otherwise (see Theorem \ref{thm-rank-inequality}).

It is an important problem to explore if or when $N$ is totally
geodesic in $(M,F)$. We will discuss this problem in subsequent works.
In this paper, we only discuss the case that $N=M$, and present a
classification for the compact connected homogeneous Finsler space $(M,F)=(G/H,F)$ with $G=I_0(M,F)$ and only one orbit of prime closed geodesics.

The two classification theorems, Theorem \ref{classification-thm-1} and Theorem \ref{classification-thm-2}
can be summarized as following.

\begin{theorem}\label{main-thm}
Assume $(M,F)$ is a compact connected homogeneous Finsler space with only one orbit
of prime closed geodesics. If $\dim M$ is odd, we further assume
$F$ is reversible. Then $(M,F)$ must be
a Riemannian symmetric $\mathrm{S}^n$, $\mathbb{R}\mathrm{P}^n$,
$\mathbb{C}\mathrm{P}^n$, $\mathbb{H}\mathrm{P}^n$ or
$\mathbb{O}\mathrm{P}^2$.
\end{theorem}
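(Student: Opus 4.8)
The plan is to combine the rank inequality of Theorem~\ref{thm-rank-inequality} with an analysis of the one-parameter subgroup generating the unique congruence class of prime closed geodesics, so as to reduce the classification to a short list of homogeneous pairs $(G,H)$ from which the conclusion can be read off. Write $M=G/H$ with $G=I_0(M,F)$ compact connected, fix a reductive decomposition $\mathfrak{g}=\mathfrak{h}\oplus\mathfrak{m}$, identify $T_oM\cong\mathfrak{m}$ at $o=eH$, and let $S_o=\{v\in\mathfrak{m}:F(v)=1\}$ be the indicatrix. By Lemma~\ref{lemma-2} every prime closed geodesic is homogeneous; fix one, $\gamma$, through $o$, generated by a Killing field associated to $X\in\mathfrak{g}$ normalised so that $v_0:=X_{\mathfrak{m}}\in S_o$, so that $X$ is a geodesic vector and $\exp(t_0X)\in H$ for a minimal $t_0>0$. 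Two elementary observations drive everything. First, a closed geodesic $g\gamma$ passes through $o$ precisely when $g\in H\exp(\mathbb{R}X)$, and it is then generated by $\mathrm{Ad}(h)X$ for some $h\in H$; hence the closed geodesics through $o$ have unit tangents at $o$ forming the single isotropy orbit $\mathrm{Ad}(H)v_0\subseteq S_o$, and in particular all prime closed geodesics of $(M,F)$ share a common length $L$. Second, $T_X:=\overline{\exp(\mathbb{R}X)}$ is a torus with $T_X\cdot o=\overline{\gamma}=\gamma$, so $T_X\cap H$ has corank one in $T_X$; combined with Theorem~\ref{thm-rank-inequality} this strongly constrains how a torus through $X$ can sit inside $G$ relative to a maximal torus of $H$, and I would first try to deduce from the one-orbit condition that $X$ may be taken regular.

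The algebraic core is to list the admissible pairs $(G,H)$. Using that $G$ is the full connected isometry group acting transitively with $\bigcup_{g\in G}g\gamma=M$, that $X$ solves the Finsler homogeneous-geodesic equation $g_{v_0}([X,Z]_{\mathfrak{m}},v_0)=0$ for all $Z\in\mathfrak{m}$, and the corank-one and rank constraints above, one should obtain a finite list: the full-isometry presentations of the compact rank-one symmetric spaces, together with a handful of ``squashed'' candidates, such as the Berger-type presentations $S^{2k+1}=SU(k+1)/SU(k)$ and $S^{4k+3}=Sp(k+1)/Sp(k)$ and the even-dimensional $Sp(k)/(Sp(k{-}1)U(1))=\mathbb{C}\mathrm{P}^{2k-1}$, whose isotropy fails to be transitive on $S_o$. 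Each squashed candidate must then be killed by exhibiting a second, non-congruent homogeneous closed geodesic --- generated by a different type of element of $\mathfrak{g}$, e.g.\ a ``Hopf'' direction versus a ``horizontal'' one --- and showing its length cannot equal $L$. In odd dimensions reversibility of $F$ is used exactly here, to block the Katok-type phenomenon by which an irreversible deformation could destroy the competing closed geodesic; in even dimensions the positivity of the Euler characteristic, which $\mathrm{rk}\,G=\mathrm{rk}\,H$ guarantees, forces enough closed geodesics to persist. I expect this elimination to be the main obstacle, and the point at which the parity and reversibility hypotheses genuinely enter.

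What remains is a full-isometry presentation of a compact rank-one symmetric space, for which $H$ acts transitively on $S_o$. For such an $F$ the $H$-invariant convex body bounded by $S_o$ equals its John ellipsoid: that ellipsoid is $H$-invariant by uniqueness, it meets $S_o$ in some point, hence by transitivity along all of $S_o$, and two concentrically nested convex bodies with the same boundary coincide; thus $F$ is Riemannian. Then $(M,F)$ is a Riemannian homogeneous space whose isotropy is transitive on the unit sphere, i.e.\ a two-point homogeneous space, and by the classification of such spaces it is a compact rank-one symmetric space with its symmetric metric. As $M$ is compact this gives $S^n$, $\mathbb{R}\mathrm{P}^n$, $\mathbb{C}\mathrm{P}^n$, $\mathbb{H}\mathrm{P}^n$ or $\mathbb{O}\mathrm{P}^2$; in odd dimension only the odd spheres and odd-dimensional real projective spaces survive, in agreement with the reversibility assumption imposed there.
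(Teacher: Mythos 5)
Your skeleton---rank inequality plus homogeneity of closed geodesics, the observation that the unit tangents at $o$ of closed geodesics form a single $\mathrm{Ad}(H)$-orbit $\mathcal{C}$ of common length, elimination of non-symmetric candidates by exhibiting a second orbit, and the final step that isotropy transitive on the indicatrix forces $F$ to be Riemannian and then symmetric via the two-point homogeneous classification---does match the paper's strategy in outline, and your John-ellipsoid finish is correct (the paper in effect uses the simpler remark that an invariant norm under a group transitive on directions is a multiple of the bi-invariant Euclidean norm). But the decisive step is exactly the one you leave as an assertion: that the rank and corank-one constraints yield a finite list consisting of the CROSS presentations plus a few ``squashed'' candidates. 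That is not a consequence of the constraints you state, and your list is wrong as written. In even dimensions $\mathrm{rk}\,G=\mathrm{rk}\,H$ admits, among many others, the flag manifolds $\mathrm{SU}(3)/T^2$, $\mathrm{Sp}(3)/\mathrm{Sp}(1)^3$, $\mathrm{F}_4/\mathrm{Spin}(8)$ and $\mathrm{Sp}(n)/\mathrm{U}(1)\mathrm{Sp}(n-1)$; the paper needs a genuinely new lemma (Lemma \ref{lemma-classification-even-3}: no two root planes $\mathfrak{g}_{\pm\alpha},\mathfrak{g}_{\pm\beta}\subset\mathfrak{m}$ with $\alpha\pm\beta\notin\Delta$, proved by a fixed-point-set argument producing a totally geodesic piece covered by $\mathrm{S}^2\times\mathrm{S}^2$ carrying arbitrarily long closed geodesics) to reduce to Wallach's list, and then separate fundamental-group and fixed-point-torus arguments to kill the flag manifolds. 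In odd dimensions $\mathrm{rk}\,G=\mathrm{rk}\,H+1$ allows a large zoo of pairs, and the paper's proof is a root-system case-by-case analysis (Cases I--III, Tables \ref{table-2}--\ref{table-6}) driven by the $g_u^F$-orthogonality lemmas of Section \ref{Section-algebraic-setup-ortho}, which manufacture closed geodesics tangent to specific root planes and then derive contradictions either from the fact that $\mathcal{C}$ is a single $\mathrm{Ad}(H)$- (hence Weyl-) orbit (Lemma \ref{key-lemma-4}) or from arbitrarily long geodesics on totally geodesic flat tori and product spaces. Nothing in your sketch substitutes for this machinery, and ``one should obtain a finite list'' is where the proof actually lives.

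You also misidentify where reversibility enters. It is not used to compare the lengths of two competing geodesics on a squashed sphere, nor through any Katok-type deformation consideration; it is used to prove the extra orthogonality of Lemma \ref{ortho-lemma-2-5} (an element of $T_H$ acts by $u\mapsto -u$ on $\hat{\mathfrak{m}}_{\pm\alpha'}$, and reversibility makes $g^F_u$ invariant under this action), which feeds Lemma \ref{key-lemma-3} and hence the construction of closed geodesic directions in root planes used throughout Case III. Similarly, ``positivity of the Euler characteristic forces enough closed geodesics to persist'' is not an argument the paper makes, nor one you supply. So the proposal is a plausible outline whose middle portion---deriving the admissible pairs $(G,H)$ and eliminating the non-symmetric ones---is missing, and the candidate list you do write down omits precisely the cases that require most of the paper's work.
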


The proof of Theorem \ref{main-thm} (i.e. the proofs of Theorem \ref{classification-thm-1} and Theorem \ref{classification-thm-2}) mainly uses the rank inequality
in Theorem \ref{thm-rank-inequality} and the discussion for compact Lie algebras. Amazingly, it rhythms with the classification of positively curved homogeneous Finsler spaces
\cite{XD-normal-homogeneous}\cite{XD-towards-odd}\cite{XDHH}\cite{XZ-2017}.
Many algebraic techniques are borrowed from those works.

There might be an alternative approach proving Theorem \ref{main-thm}, by the following theorem of J. McCleary and W. Ziller in \cite{MZ-1987}.

\begin{theorem}\label{cite-thm-3}
Let $M$ be a compact connected simply connected homgeneous space
which is not diffeomorphic to a symmetric space of rank 1. Then
the betti numbers $b_i(\Lambda M,\mathbb{Z}_2)$ are unbounded.
\end{theorem}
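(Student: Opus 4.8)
The plan is to split into a rational case, handled by classical results, and a genuinely mod‑$2$ case, where the hypothesis does its work. First recall the theorem of Sullivan and Vigu\'{e}-Poirier on the closed geodesic problem: if $M$ is a simply connected closed manifold whose rational cohomology ring $H^*(M,\mathbb{Q})$ requires at least two algebra generators, then the Betti numbers $b_i(\Lambda M,\mathbb{Q})$ are unbounded. Since $b_i(\Lambda M,\mathbb{Z}_2)\ge b_i(\Lambda M,\mathbb{Q})$ by the universal coefficient theorem, we are done in that case. So I may assume from now on that $H^*(M,\mathbb{Q})$ is generated by one element, i.e. $M$ has the rational cohomology of $S^n$, $\mathbb{C}\mathrm{P}^n$, $\mathbb{H}\mathrm{P}^n$ or $\mathbb{O}\mathrm{P}^2$.

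Next I distinguish according to $H^*(M,\mathbb{Z}_2)$. If $H^*(M,\mathbb{Z}_2)$ is also generated by one element, it must have the form $\mathbb{Z}_2[x]/(x^{m+1})$; combined with the monogenicity of $H^*(M,\mathbb{Q})$ and the fact that $M$ is a compact simply connected homogeneous space, this pins down the integral cohomology ring of $M$ as that of a rank‑one symmetric space, and then the classification of transitive actions on homotopy spheres and on cohomology projective spaces (Montgomery--Samelson, Borel, and later refinements) forces $M$ to be diffeomorphic to that rank‑one symmetric space — which is excluded by hypothesis. Hence $H^*(M,\mathbb{Z}_2)$ requires at least two algebra generators. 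This is exactly where $2$‑torsion in $G$ or $H$ enters: for the Wu manifold $SU(3)/SO(3)$ one acquires $Sq^1x$ as a second generator, for a Stiefel manifold $V_2(\mathbb{R}^{2k+1})$ the Gysin sequence of its sphere‑bundle structure supplies one, and similarly in the remaining cases.

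Now I run the Eilenberg--Moore spectral sequence of the homotopy pullback square exhibiting $\Lambda M\simeq M\times_{M\times M}M$; its $E_2$‑term is the Hochschild homology $\mathrm{HH}_*(H^*(M,\mathbb{Z}_2))$, and it converges to $H^*(\Lambda M,\mathbb{Z}_2)$ because $M$ is simply connected. A direct computation with the bimodule resolution shows that for a finite‑dimensional connected graded $\mathbb{Z}_2$‑algebra $A$, the dimensions $\dim_{\mathbb{Z}_2}\mathrm{HH}_i(A)$ are bounded in $i$ exactly when $A\cong\mathbb{Z}_2[x]/(x^{m+1})$; since our $H^*(M,\mathbb{Z}_2)$ is not of this form, the $E_2$‑page already has unbounded Betti numbers (growing at least linearly, the rate depending on whether $H^*(M,\mathbb{Z}_2)$ is a complete intersection). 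The remaining task is to pass from $E_2$ to the abutment, and this is the crux: it cannot be avoided, because in the rational setting the analogous page $\mathrm{HH}_*(H^*(M,\mathbb{Q}))$ is unbounded while $H^*(\Lambda M,\mathbb{Q})$ stays bounded, so the differentials genuinely must be controlled. I would establish that for the homogeneous spaces occurring here the spectral sequence degenerates at $E_2$, equivalently that $C^*(M,\mathbb{Z}_2)$ is formal; for these spaces this is accessible from their explicit cell decomposition, or from the collapse at the prime $2$ of the Serre spectral sequence of $H\to G\to M$, and in low dimensions it can be cross‑checked against the explicit mod‑$2$ loop space computations of Ziller and of Cohen--Jones--Yan. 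Given the collapse, $H^*(\Lambda M,\mathbb{Z}_2)\cong\mathrm{HH}_*(H^*(M,\mathbb{Z}_2))$ and the unboundedness follows.

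The main obstacle is precisely this last step, the $E_2$‑to‑$E_\infty$ passage: a naive dimension count on the $E_2$‑page does not suffice, so one must actually prove mod‑$2$ formality of the relevant homogeneous spaces — a delicate point, since mod‑$2$ formality can fail where Steenrod operations are active — or else replace it by a careful bookkeeping of the differentials of the Eilenberg--Moore spectral sequence, or of the evaluation fibration $\Omega M\to\Lambda M\to M$, which carries the constant‑loop section so that $H^*(M,\mathbb{Z}_2)$ injects and the (unbounded, by the $2$‑torsion in $\pi_*(M)$) cohomology $H^*(\Omega M,\mathbb{Z}_2)$ can be transgressed away only at a controlled rate in each fibre degree. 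The classification input used in the second paragraph is also substantial, but it is available in the literature.
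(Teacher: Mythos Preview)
This theorem is not proved in the present paper: it is quoted as a result of McCleary and Ziller \cite{MZ-1987} and is only mentioned as a possible alternative route to Theorem~\ref{main-thm}. Consequently there is no ``paper's own proof'' of Theorem~\ref{cite-thm-3} against which to compare your attempt; the comparison you were asked to make is vacuous here.

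That said, a few words on your proposal on its own merits. The overall architecture --- reduce via Sullivan--Vigu\'e-Poirier to the rationally monogenic case, then pass to $\mathbb{Z}_2$-coefficients and analyze $\mathrm{HH}_*(H^*(M;\mathbb{Z}_2))$ through the Eilenberg--Moore spectral sequence for $\Lambda M$ --- is indeed the shape of the McCleary--Ziller argument. You have correctly located the crux: unboundedness on the $E_2$-page does not by itself give unboundedness at $E_\infty$, and homogeneous spaces are \emph{not} in general $\mathbb{Z}_2$-formal, so your suggested shortcut (``establish that \ldots\ $C^*(M,\mathbb{Z}_2)$ is formal'') will not go through as stated. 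What McCleary and Ziller actually do is work with an explicit Koszul-type cochain model for $G/H$ built from $H^*(BG;\mathbb{Z}_2)$ and $H^*(BH;\mathbb{Z}_2)$, and control the differentials of the loop-space spectral sequence directly from that model rather than appealing to formality; the growth of Betti numbers is then read off from the model. Your alternative suggestion in the final paragraph --- bookkeeping the differentials via the evaluation fibration $\Omega M\to\Lambda M\to M$ with its section --- is closer in spirit to what is required, but still needs the specific algebraic input that the homogeneous-space model provides.

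The classification step in your second paragraph (monogenic $\mathbb{Q}$- and $\mathbb{Z}_2$-cohomology forces a diffeomorphism to a rank-one symmetric space) is also nontrivial and leans on substantial external results; it is correct for compact simply connected homogeneous spaces, but you should be explicit about invoking Onishchik's classification of transitive actions rather than leaving it as a sketch.
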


In this topological approach, we might need the
non-degenerate condition for all orbits of closed geodesics to
apply a generalized Gromoll-Meyer's theorem; see Theorem 3.1 in
\cite{Ra-1989}. More discussions are needed to cover the non-simply connected case and the gap from
determining the manifold to determining the metric. After all,
Theorem \ref{cite-thm-3} is a very hard topological theorem.
Our proof of Theorem \ref{main-thm}, which mainly uses Lie theory, seems much more fundamental.

Theorem \ref{main-thm} answers Question \ref{main-question} affirmatively in homogeneous Riemannian geometry, as well as for
many homogeneous Finsler spaces. Many techniques for proving
Theorem \ref{main-thm} might be
extended to non-homogeneous context and play an important
role in future works on Question \ref{main-question}.

At the end, we discuss the special case of homogeneous Finsler spheres as the application of these techniques. We prove that a homogeneous Finsler sphere $(M,F)$ has
only one orbit of prime closed geodesics only when it is a
Riemannian symmetric sphere (see Proposition \ref{prop-final}). In this case, we do not need to assume the reversibility of $F$ in advance.

This paper is organized as following. In Section 2, we summarize
some backgroup knowledge in general and homogeneous Finsler geometry. In Section 3, we prove the rank inequality, i.e. Theorem \ref{thm-rank-inequality}. In Section 4,
we discuss some examples of homogeneous Finsler spaces, with only one, or with more orbits of prime closed geodesics. In Section 5,
we introduce the algebraic setup and some orthogonality lemmas in homogeneous Finsler geometry. In Section 6, we classify even dimensional homogeneous Finsler spaces with only one orbit of prime closed geodesics. In Section 7 and Section 8, we classify odd dimensional reversible homogeneous Finsler spaces
with only one orbit of prime closed geodesics.

{\bf Acknowledgement.}
The author would like to sincerely thank Wolfgang Ziller, Hans-Bert Rademacher, Shaoqiang Deng, Huagui Duan and Zaili Yan for the helpful discussions.

\section{Preliminaries}

Here we briefly summarize some fundamental knowledge on general and homogeneous Finsler geometry. See \cite{BCS-2000} and \cite{De2012} for more details.

Unless otherwise specified, we will only discuss
closed connected smooth manifold in this paper.

A {\it Finsler metric} on an $n$-dimensional manifold $M$ is
a continuous function $F:TM\rightarrow[0,+\infty)$, satisfying
the following conditions:
\begin{description}
\item{\rm (1)} $F$ is positive and smooth on the slit tangent bundle $TM\backslash0$;
\item{\rm (2)} $F$ is positively homogeneous of degree one, i.e.
for any $x\in M$, $y\in T_xM$ and $\lambda\geq 0$, we have
$F(x,\lambda y)=\lambda F(x,y)$;
\item{\rm (3)} $F$ is strictly convex, i.e. for any standard
local chart $x=(x^i)\in M$ and $y=y^j\partial_{x^j}\in T_xM$,
the Hessian matrix $(g_{ij}(x,y))=(\frac12[F^2(x,y)]_{y^iy^j})$
is positive definite whenever $y\neq 0$.
\end{description}
We will also call $(M,F)$ a {\it Finsler space} or a
{\it Finsler manifold}. The restriction of $F$ to each
tangent space is called a {\it Minkowski norm}.

A Finsler metric $F$ is called {\it reversible} if
for any $x\in M$ and $y\in T_xM$, $F(x,y)=F(x,-y)$.

The Hessian matrix $(g_{ij}(x,y))$, where $y\in T_xM$ is nonzero, defines an inner product on $T_xM$, i.e. for any $u=u^i\partial_{x^i}$ and $v=v^j\partial_{x^j}$ in $T_xM$,
$$\langle u,v\rangle^F_y=\frac{1}{2}\frac{\partial^2}{\partial s\partial t}[F^2(y+su+tv)]_{y^iy^j}=g_{ij}(x,y)u^iv^j.$$
Sometimes, we simply denote this inner product as $g_y^F$.

A {\it geodesic} $c(t)$ on $(M,F)$ is a nonconstant smooth curve satisfying
the local minimizing principle for the arch length functional.
Usually we parametrize it to have a constant positive speed. Then
the curve $(c(t),\dot{c}(t))$ in $TM\backslash 0$ is an integration curve of the geodesic spray vector field
$\mathbf{G}=y^i\partial_{x^i}-2\mathbf{G}^i\partial_{y^i}$ where
$$\mathbf{G}_i=\frac14g^{il}([F^2]_{x^ky^l}y^k-[F^2]_{x^l}). $$

A geodesic is called {\it reversible} if it is still a geodesic with its direction reversed. A closed geodesic is called {\it prime} if it is not the multiple rotation of another. When we count the closed
geodesics, we only count the prime ones. On the other hand, we specify the directions, i.e. a prime reversible closed geodesic is counted as two. Different closed geodesics are {\it geometrically the same} if their images are the same subsets in $M$.

The isometry group $I(M,F)$ of the compact connected
Finsler space $(M,F)$
is a compact Lie group \cite{DH2002}. Its identity component $I_0(M,F)$
is called the {\it connected isometry group}. We call $(M,F)$ {\it homogeneous} if
$I_0(M,F)$ acts transitively on $M$.
A homogeneous Finsler space $(M,F)$ may have different presentations $M=G/H$ where $G$ is a closed connected
subgroup of $I_0(M,F)$ which actions transitively, and $H$
is the compact isotropy subgroup at $o=eH\in G/H=M$. Denote
$\mathfrak{g}=\mathrm{Lie}(G)$, $\mathfrak{h}=\mathrm{Lie}(H)$,
and $\mathfrak{m}$ any $\mathrm{Ad}(H)$-invariant complement of
$\mathfrak{h}$ in $\mathfrak{g}$, then we call $\mathfrak{g}=\mathfrak{h}+\mathfrak{m}$ a {\it reductive
decomposition} for $G/H$. The $G$-invariant Finsler metric $F$
is one-to-one determined by its restriction to $\mathfrak{m}=T_o(G/H)$, which is an $\mathrm{Ad}(H)$-invariant
Minkowski norm.

The Lie algebra $\mathfrak{g}$ of $G=I_0(M,F)$ can be identified
with the linear space
of all Killing vector field on $(M,F)$. For any vector $u\in\mathfrak{g}$,
we denote $X^u$ the Killing vector field that $u$ defines on $(M,F)$. We denote $\mathrm{rk}G=\mathrm{rk}\mathfrak{g}$ the {\it rank} of the compact Lie group $G$ and its Lie algebra, which is the dimension of the maximal torus or the Cartan subalgebra.

Isometries and Killing vector fields play an important role
in studying geodesics, with the following
two frequently used lemmas.

\begin{lemma}\label{preparation-lemma-totally-geodesic}
In a closed Finsler space $(M,F)$,
the common fixed point set of a family of isometries in $I(M,F)$, or the common zero point set of a family of Killing vector fields of $(M,F)$, is a finite disjoint union of connected imbedded
totally geodesic submanifolds.
\end{lemma}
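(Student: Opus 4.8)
The plan is to reduce the statement to two classical facts about isometries of Finsler manifolds and then patch them together. First I would recall that for a single isometry $\sigma\in I(M,F)$, the fixed point set $\mathrm{Fix}(\sigma)$ is a closed totally geodesic submanifold: near a fixed point $p$, the differential $d\sigma_p$ is a linear isometry of the Minkowski norm $F_p$ on $T_pM$, the geodesic spray $\mathbf{G}$ is $\sigma$-invariant, and the Finsler exponential map $\exp_p$ (which is $C^1$ at the origin and a local homeomorphism, smooth off $0$) conjugates the linear action of $d\sigma_p$ to the action of $\sigma$; hence $\mathrm{Fix}(\sigma)$ is locally the image under $\exp_p$ of the fixed linear subspace $V_p=\ker(d\sigma_p-\mathrm{id})\subseteq T_pM$. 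This gives that $\mathrm{Fix}(\sigma)$ is an imbedded submanifold near each of its points, with tangent space $V_p$, and the fact that a geodesic issuing from $p$ in a direction of $V_p$ stays pointwise fixed shows total geodesy. Since $M$ is compact, $\mathrm{Fix}(\sigma)$ has finitely many connected components.

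For a family $\{\sigma_\alpha\}$ of isometries, the common fixed set is $\bigcap_\alpha \mathrm{Fix}(\sigma_\alpha)$. I would localize at a common fixed point $p$ and observe that $\bigcap_\alpha\mathrm{Fix}(\sigma_\alpha)$ is, near $p$, the image under $\exp_p$ of $\bigcap_\alpha V_p^{(\alpha)}$, a linear subspace of $T_pM$; here one uses that the finitely-many-distinct-differentials phenomenon (the closure of the group generated by the $\sigma_\alpha$ is a compact Lie group, so only finitely many subspaces $V_p^{(\alpha)}$ actually occur) keeps the intersection a single totally geodesic submanifold rather than a bad set. Again compactness of $M$ forces finitely many components. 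The Killing vector field case is analogous, or can be deduced from the isometry case: the common zero set of a family $\{X^{u_\alpha}\}$ of Killing fields equals the common fixed point set of the one-parameter groups $\{\exp(tu_\alpha)\}$, so it falls under the statement already proven.

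The main obstacle, compared with the Riemannian situation, is the low regularity of the Finsler exponential map at the origin: $\exp_p$ is only $C^1$ there in general, not smooth, so one cannot naively invoke the implicit function theorem to get $\mathrm{Fix}(\sigma)$ as a smooth submanifold. The standard fix, which I would spell out, is that $\mathrm{Fix}(\sigma)$ is itself a (closed) Finsler submanifold because the restriction of $F$ to it is again a Minkowski norm on each tangent space $V_p$, and geodesics of $(M,F)$ tangent to $V_p$ remain in $\mathrm{Fix}(\sigma)$ and are geodesics of the induced metric; the smooth structure on $\mathrm{Fix}(\sigma)$ then comes from the ambient smooth structure via the fact that it is a union of images of complete geodesics, i.e. one works with normal coordinates along a fixed geodesic instead of at the origin. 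This is essentially the argument of Deng–Hou for totally geodesic submanifolds; I would cite it rather than reproduce it in full, and the proof in the paper presumably does the same.
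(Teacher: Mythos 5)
The paper itself gives no proof of this lemma: it declares it a well-known fact and refers to \cite{De2008} for the Killing-field case, so there is no internal argument for your write-up to match; what you have produced is the standard proof that the cited literature carries out, and it is essentially sound. The equivariance $\sigma\circ\exp_p=\exp_p\circ d\sigma_p$ gives the inclusion $\exp_p(V_p)\subseteq\mathrm{Fix}(\sigma)$ near $p$; for the reverse inclusion (which you should state explicitly) one uses that a fixed point $q$ in a small forward ball around $p$ is joined to $p$ by a unique minimizing geodesic, which $\sigma$ must carry to itself, so its initial velocity lies in $V_p$. Your reduction of the zero set of a Killing field to the fixed-point set of its flow, the total-geodesy argument, and the use of compactness of $M$ for finiteness of components are all correct. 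Two comments. First, the appeal to a ``finitely-many-distinct-differentials phenomenon'' is unnecessary: an arbitrary intersection of linear subspaces of $T_pM$ is again a linear subspace, so the family case needs no compact-group input at that point. Second, the genuinely delicate issue is the one you flag, namely smoothness of $\exp_p(V_p)$ at $p$ (the Finsler exponential map is only $C^1$ there), and your proposed fix via ``normal coordinates along a fixed geodesic'' is too vague to check. A cleaner route, in the spirit of the paper's use of the compactness of $I(M,F)$: let $K$ be the closure in $I(M,F)$ of the group generated by the given isometries (or by the flows of the Killing fields); $K$ is a compact Lie group with the same common fixed set, and averaging any Riemannian metric over $K$ produces a $K$-invariant Riemannian metric, whose classical fixed-point theory gives the smooth imbedded submanifold structure; the Finsler metric $F$ then enters only through your geodesic argument, which shows each component is totally geodesic for $F$. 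With that adjustment (or simply by citing \cite{De2008}, as the paper does), your argument is complete.
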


If a totally geodesic submanifold has a positive dimension, then
its geodesics, with respect to the submanifold metric,
are also geodesics for the ambient Finsler space.

\begin{lemma}\label{preparation-lemma-Killing-geodesic}
Let $X$ be a Killing vector field on the Finsler space $(M,F)$,
and $x\in M$ a critical point for the function $f(\cdot)=F(X(\cdot))$ with $X(x)\neq 0$. Then the integration
curve of $X$ at $x$ is a geodesic. In particular, when $X$
generates an $\mathrm{S}^1$-subgroup in $I_0(M,F)$, its integration curve
at $x$ is a closed geodesic.
\end{lemma}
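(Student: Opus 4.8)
The plan is to reduce the statement to an application of the first variation formula for arc length, exploiting that the flow of a Killing field preserves $F$ and hence moves geodesics to geodesics. First I would set up notation: let $\varphi_t$ denote the one-parameter group of isometries generated by $X$, so that $t\mapsto\varphi_t(x)$ is the integration curve $c(t)$ of $X$ through $x=c(0)$, with $\dot c(t)=X(c(t))$. Since $X(x)\neq0$, the function $f=F(X(\cdot))$ is smooth near $x$, and along $c$ it is constant: $f(c(t))=F(X(\varphi_t(x)))=F((\varphi_t)_*X(x))=F(X(x))$, because $X$ is $\varphi_t$-invariant and $\varphi_t$ is an isometry. Thus $c$ has constant speed $r:=F(X(x))>0$, so $c$ is a candidate to be a constant-speed geodesic and it suffices to show it is critical for the length functional among curves with the same endpoints.

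The core step is the first variation argument. Fix a small interval $[0,T]$ and consider a variation $c_s(t)$ of $c=c_0$ with fixed endpoints $c_s(0)=x$, $c_s(T)=c(T)$. The first variation of the length $L(c_s)=\int_0^T F(\dot c_s(t))\,dt$ at $s=0$ is, by the standard Finsler first variation formula,
\[
\left.\frac{d}{ds}\right|_{s=0}L(c_s)=\Bigl[g^F_{\dot c}\bigl(\partial_s c_s|_{s=0},\tfrac{\dot c}{F(\dot c)}\bigr)\Bigr]_0^T-\int_0^T g^F_{\dot c}\bigl(\partial_s c_s|_{s=0},D_{\dot c}\tfrac{\dot c}{F(\dot c)}\bigr)\,dt,
\]
where $D$ denotes the Chern (or any natural) covariant derivative along $c$ with reference vector $\dot c$; the boundary term vanishes because the variation fixes endpoints. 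So I must show the covariant acceleration of the unit-speed reparametrization of $c$ vanishes, equivalently that $\mathbf G(\dot c(t))$ satisfies the geodesic spray equation. To see this I use that $x$ is a critical point of $f=F(X(\cdot))$: for any tangent vector $v\in T_xM$, extending $v$ to a vector field and differentiating $F(X(\cdot))$ gives $0=df_x(v)=g^F_{X(x)}\bigl(\nabla_v X,\,X(x)/F(X(x))\bigr)$ after using the homogeneity identity $dF_y(w)=g^F_y(w,y)/F(y)$ together with the chain rule, where $\nabla_v X$ is the appropriate derivative of $X$ in the direction $v$. Matching this with the geodesic equation for the integral curve of a Killing field — the curve $c$ is a geodesic precisely when the "tension" term, which is controlled by $\nabla X$ evaluated against $X$, vanishes — yields that $c$ is a geodesic on $(M,F)$.

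The main obstacle, and the point requiring care, is the precise relation in Finsler geometry between the derivative of the Killing field $X$ along itself and the geodesic spray: unlike the Riemannian case there is no single Levi-Civita connection, the Killing condition takes the form of the Finslerian Killing equation (a condition on $g^F_y(\nabla^y_u X,v)+g^F_y(u,\nabla^y_v X)$ with reference vector $y$ varying), and one must be careful that the reference vector used throughout is $y=X(x)=\dot c(0)$. I would handle this by working along $c$ with all metric quantities referenced to $\dot c$, using the constancy of $F(\dot c)$ established above so that the reference vector is "frozen" in norm along the curve, and then invoking the Killing equation at the single point $x$ with $y=X(x)$ to conclude that the spray equation $\ddot c^i+2\mathbf G^i(\dot c)=0$ holds at $t=0$; by homogeneity of the flow $\varphi_t$ (which commutes with the geodesic spray since it is generated by an isometry, hence preserves $\mathbf G$), the same identity then propagates to all $t$. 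Finally, when $X$ generates an $\mathrm S^1$-subgroup, $\varphi_t$ is periodic, so $c(t)=\varphi_t(x)$ is a closed curve, and being a geodesic it is a closed geodesic; this gives the last sentence of the statement.
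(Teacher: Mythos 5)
Your proposal is essentially correct, but be aware that the paper itself does not prove this lemma: it simply notes that the statement ``follows immediately from Lemma 3.1 in \cite{DX-2014}'', i.e.\ the Finslerian analogue of the classical Riemannian fact that the integral curve of a Killing field through a critical point of its length function is a geodesic. What you have written is, in effect, a self-contained proof of that cited lemma, and its ingredients are the right ones: constancy of $F(X)$ along the flow line, the identity $df_x(v)=g^F_{X(x)}\bigl(\nabla_v X, X(x)\bigr)/F(X(x))$ (which is valid precisely because the horizontal Chern derivative of $F$ vanishes, absorbing the $F_{x^i}$-terms), the Killing equation contracted against the reference vector $y=X(x)$, and propagation along the orbit. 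The one step you flag but do not carry out is exactly the crux: the general Finslerian Killing identity carries Cartan-tensor corrections, and your argument needs the observation that these die upon contraction with the reference vector (since $C_y(y,\cdot,\cdot)=0$), after which criticality plus the contracted Killing equation yields $g^F_{X}(D_XX,v)=0$ for all $v$, i.e.\ the spray equation at $x$; writing out this cancellation (including the recombination of the nonlinear-connection terms into $2\mathbf{G}^i$) is what a complete proof must contain. Two minor points: the first-variation framing is an unnecessary detour, since once $D_{\dot c}\dot c=0$ with reference vector $\dot c$ the curve is a geodesic by definition; and the simplest propagation argument is that $f\circ\varphi_t=f$, so every $c(t)$ is again a critical point with $X(c(t))\neq0$ and the pointwise argument repeats there --- although your alternative, that both the complete lift $X^C$ and the spray $\mathbf{G}$ are invariant under the lifted flow so their equality at one point of the orbit persists along it, is also valid. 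The closing reduction of the $\mathrm{S}^1$ case to closedness of the orbit is fine. In short: your route proves directly what the paper outsources to the literature, at the cost of one nontrivial computation that should be made explicit rather than asserted.
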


Lemma \ref{preparation-lemma-totally-geodesic} is a well known easy fact in Riemannian and Finsler geometry, so we skip its proof. See \cite{De2008} for the case of zero point sets of Killing vector fields.
Lemma \ref{preparation-lemma-Killing-geodesic} follows immediately
Lemma 3.1 in \cite{DX-2014}. We call a geodesic {\it homogeneous} (or non-homogeneous)
if and only if it is (or is not, respectively) the orbit of a one-parameter
subgroup in $I_0(M,F)$. Thus Lemma \ref{preparation-lemma-Killing-geodesic} provides homogeneous
geodesics from Killing vector fields which are orbits of $\mathrm{S}^1$-subgroups in $I_0(M,F)$.

Killing vector fields which generate $\mathrm{S}^1$-subgroups in $I_0(M,F)$
can be easily found according to the following lemma.

\begin{lemma}\label{preparation-lemma-dense-rational-lie-vector}
Let $G$ be a compact connected Lie group, and $\mathfrak{g}$ its
Lie algebra. Then the subset $\mathcal{S}$ of all vectors in $\mathfrak{g}$ which generate $\mathrm{S}^1$-subgroups in $G$ is a dense subset in $\mathfrak{g}$.
\end{lemma}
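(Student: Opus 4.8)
The plan is to reduce the statement to the corresponding density statement inside a maximal torus, where it becomes an elementary fact about lattices. Fix an $\mathrm{Ad}(G)$-invariant inner product $\langle\cdot,\cdot\rangle$ on $\mathfrak{g}$, which exists because $G$ is compact, and let $\mathfrak{t}\subseteq\mathfrak{g}$ be a Cartan subalgebra, with $T=\exp\mathfrak{t}$ the corresponding maximal torus. Recall that every $v\in\mathfrak{g}$ is $\mathrm{Ad}(G)$-conjugate to some vector in $\mathfrak{t}$, i.e. $\mathfrak{g}=\bigcup_{g\in G}\mathrm{Ad}(g)\mathfrak{t}$. Moreover, if $w\in\mathfrak{t}$ generates an $\mathrm{S}^1$-subgroup of $G$, then for any $g\in G$ the vector $\mathrm{Ad}(g)w$ generates $g\exp(\mathbb{R}w)g^{-1}$, which is again an $\mathrm{S}^1$-subgroup; and $\mathrm{Ad}(g)$ is an isometry of $(\mathfrak{g},\langle\cdot,\cdot\rangle)$. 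Therefore, once we show that $\mathcal{S}\cap\mathfrak{t}$ is dense in $\mathfrak{t}$, an arbitrary $v\in\mathfrak{g}$ can be written as $v=\mathrm{Ad}(g)v_0$ with $v_0\in\mathfrak{t}$, and approximating $v_0$ by some $w_0\in\mathcal{S}\cap\mathfrak{t}$ yields $\mathrm{Ad}(g)w_0\in\mathcal{S}$ approximating $v$ equally well; hence $\mathcal{S}$ is dense in $\mathfrak{g}$.

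It then remains to prove that $\mathcal{S}\cap\mathfrak{t}$ is dense in $\mathfrak{t}$. Let $\Lambda=\ker(\exp|_{\mathfrak{t}})\subseteq\mathfrak{t}$; since $T$ is compact, $\Lambda$ is a lattice of full rank $\mathrm{rk}\,G=\dim\mathfrak{t}$, and $\exp$ induces an isomorphism $\mathfrak{t}/\Lambda\cong T$. If $\ell\in\Lambda$ is a primitive lattice vector, then $s\mapsto\exp(s\ell)$ descends to an injective homomorphism $\mathbb{R}/\mathbb{Z}\to T$, so $\exp(\mathbb{R}\ell)$ is an $\mathrm{S}^1$-subgroup, and consequently $\lambda\ell\in\mathcal{S}$ for every $\lambda\in\mathbb{R}\setminus\{0\}$. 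Hence $\mathcal{S}\cap\mathfrak{t}$ contains every $\mathbb{Q}$-rational vector of $\mathfrak{t}$ with respect to $\Lambda$ (clearing denominators writes such a vector as $\lambda\ell$ for some $\lambda\in\mathbb{Q}$ and $\ell\in\Lambda$), as well as $0$; since the $\mathbb{Q}$-rational vectors relative to a full-rank lattice are dense in the ambient space, $\mathcal{S}\cap\mathfrak{t}$ is dense in $\mathfrak{t}$, and the lemma follows.

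There is essentially no serious obstacle here; the only points that demand a little care are: (i) that $\mathrm{Ad}(G)$ acts on $\mathfrak{g}$ by isometries, which is what allows density inside one Cartan subalgebra to propagate to all of $\mathfrak{g}$ through the conjugacy $\mathfrak{g}=\mathrm{Ad}(G)\mathfrak{t}$; and (ii) that a primitive element of the integral lattice $\Lambda$ genuinely generates a \emph{closed} one-parameter subgroup isomorphic to $\mathrm{S}^1$, rather than a dense winding line inside $T$, which is precisely guaranteed by the definition of $\Lambda$. Everything else is the standard structure theory of compact connected Lie groups together with the density of rational points of a Euclidean space relative to a full-rank lattice.
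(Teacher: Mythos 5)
Your proof is correct and follows essentially the same route as the paper: reduce to a Cartan subalgebra $\mathfrak{t}$ via the conjugation theorem and the $\mathrm{Ad}(G)$-invariance of $\mathcal{S}$, then establish density of $\mathcal{S}\cap\mathfrak{t}$ in $\mathfrak{t}$. The only difference is that the paper declares the latter step ``obvious,'' while you spell it out with the integral lattice $\Lambda=\ker(\exp|_{\mathfrak{t}})$ and the density of its rational span, which is a faithful filling-in of the same argument.
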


\begin{proof} The subset $\mathcal{S}$ in the lemma is $\mathrm{Ad}(G)$-invariant.
Using the conjugation theorem, we only need to prove
$\mathcal{S}\cap\mathfrak{t}$ is dense in $\mathfrak{t}$,
where $\mathfrak{t}$ is a Cartan subalgbra in $\mathfrak{g}$,
generating a maximal torus. The statement is then obvious.
\end{proof}

In \cite{XZ-2017}, we have applied the following fixed point set
technique based on Lemma \ref{preparation-lemma-totally-geodesic}.
Let $(M,F)=(G/H,F)$ be a homogeneous Finsler space with a compact $G$. For any subset $\mathcal{L}\subset H$, we denote
$\mathrm{Fix}(\mathcal{L},M)$ the common fixed point set of $\mathcal{L}$ on $M$. Obviously $o=eH\in \mathrm{Fix}(\mathcal{L},M)$, so we will denote
$\mathrm{Fix}_o(\mathcal{L},M)$ the connected component of
$\mathrm{Fix}(\mathcal{L},M)$ containing $o$.
By Lemma \ref{preparation-lemma-totally-geodesic},
$\mathrm{Fix}_o(\mathcal{L},M)$ is totally geodesic in $(M,F)$.
Furthermore, $\mathrm{Fix}_o(\mathcal{L},M)=G'\cdot o$, where
$G'$ is the identity component of the centralizer $C_G(\mathcal{L})$ for $\mathcal{L}$ in $G$. Because $F'=F|_{\mathrm{Fix}_o(\mathcal{L},M)}$ is $G'$-invariant, so we may present
$\mathrm{Fix}(\mathcal{L},M)$ as a homogeneous Finsler space
$(G'/G'\cap H,F')$.

At the end of this section. We give the precise description for orbits of prime closed geodesics.

On the free loop space $\Lambda M$ of all the piecewise smooth
curves $c(t):\mathrm{S}^1=\mathbb{R}/\mathbb{Z}\rightarrow M$ on the Finsler space $(M,F)$, we
have the canonical action of $\hat{G}=I_0(M,F)\times \mathrm{S}^1$ defined by
\begin{equation}\label{hat-G-action-on-loops}
((\rho,t')\cdot c)(t)=\rho(c(t+t')) \mbox{ for any }t,t'\in\mathbb{R}/\mathbb{Z}\mbox{ and }\rho\in I_0(M,F).
\end{equation}
Obviously
this action preserves the subset of all the prime closed geodesics. Each $\hat{G}$-orbit of prime closed geodesics is
a finite dimensional submanifold in $\Lambda M$.

\section{Rank inequality}

Let $(M,F)$ be a closed connected Finsler manifold such that $\dim M>1$ and
the connected isometry group $G=I_0(M,F)$ has a positive dimension.
Then the existence of two prime closed geodesics follows immediately.

\begin{lemma}\label{lemma-1}
Assume $(M,F)$ is a closed connected Finsler manifold such that $G=I_0(M,F)$ has a positive dimension. Then there exists two distinct prime closed geodesics.
\end{lemma}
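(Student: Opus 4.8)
The plan is to produce two geometrically distinct prime closed geodesics by exploiting the positive-dimensional isometry group together with Lemma \ref{preparation-lemma-Killing-geodesic} and Lemma \ref{preparation-lemma-dense-rational-lie-vector}. Since $\dim G>0$, by Lemma \ref{preparation-lemma-dense-rational-lie-vector} the set $\mathcal{S}\subset\mathfrak{g}$ of vectors generating $\mathrm{S}^1$-subgroups of $G$ is dense, so in particular we may pick a nonzero $u\in\mathcal{S}$, generating a circle subgroup $\Gamma_u\subset G$ acting nontrivially on $M$. The Killing vector field $X^u$ is not identically zero, so the continuous function $f_u(\cdot)=F(X^u(\cdot))$ on the compact manifold $M$ attains a positive maximum at some point $x$ with $X^u(x)\ne 0$; $x$ is then a critical point of $f_u$, and Lemma \ref{preparation-lemma-Killing-geodesic} gives that the $\Gamma_u$-orbit through $x$ is a (homogeneous) closed geodesic $c_1$. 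Passing to a prime closed geodesic underlying $c_1$, we obtain our first prime closed geodesic.

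For the second one, the idea is that a single circle subgroup cannot "see" all of $M$, so there is room for another closed geodesic transverse to the first. First I would note that $c_1$ is a compact $1$-dimensional submanifold, hence a proper subset of $M$ since $\dim M>1$; pick a point $p\in M\setminus c_1$. One approach is to again use density: choose $u\in\mathcal S$ so that $X^u(p)\ne 0$ (possible because the evaluation map $\mathfrak g\to T_pM$, $v\mapsto X^v(p)$, is nonzero on a set whose intersection with the dense $\mathcal S$ is still nonempty, using that $G$ does not fix $p$ — if some point were fixed by all of $G$ one argues on a different point, or more carefully one works with the same $u$ as before if $X^u(p)\ne0$, otherwise perturbs). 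Then maximize $f_u$ over the closure of a $\Gamma_u$-invariant neighbourhood of $p$ disjoint from $c_1$, or more cleanly maximize $f_u$ on all of $M$ but over a different critical level; the resulting closed geodesic $c_2$ passes through a point where $X^u$ is nonzero and which can be arranged to lie off $c_1$.

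A cleaner route, which I expect is the intended one, is this: having found $c_1$, consider the totally geodesic set obtained from an appropriate isometry fixing $c_1$ pointwise, or simply observe that $c_1$ itself is the orbit of $\Gamma_u$ and $M\ne c_1$, so some one-parameter subgroup of $G$ moves a point of $M\setminus c_1$; maximizing the corresponding Killing field's norm on $M$ and checking the maximum point can be taken outside $c_1$ (because the maximum of $f_u$ is not attained only along a geodesic unless $f_u$ is constant there, and even then one varies $u$ within the dense set $\mathcal S$ to escape $c_1$) yields a second homogeneous closed geodesic $c_2$ with $c_2\ne c_1$ as subsets of $M$. Taking prime closed geodesics underlying $c_1$ and $c_2$ finishes the proof.

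The main obstacle is the genuinely geometric step of guaranteeing that the second closed geodesic is \emph{distinct} from the first: a naive maximization could return the same geodesic $c_1$ again. The argument must rule this out, presumably by combining (i) $\dim M>1$, so $c_1$ is a proper closed subset, with (ii) the density of $\mathcal S$ in $\mathfrak g$ and the fact that $G$ cannot act trivially on $M\setminus c_1$ (otherwise $G$ would fix an open set and hence act trivially, contradicting $\dim G>0$ acting effectively modulo a finite kernel), to select a Killing field whose norm is maximized at a point genuinely off $c_1$, and then invoking Lemma \ref{preparation-lemma-Killing-geodesic} once more. Handling the degenerate possibility that $f_u$ is constant along $c_1$ and happens to achieve the global maximum only there, for \emph{every} choice of $u\in\mathcal S$, is the delicate point; I would resolve it by a compactness/openness argument on the set of valid $u$, or by restricting to the totally geodesic fixed-point set of a suitable element of $H$ via the technique recalled after Lemma \ref{preparation-lemma-dense-rational-lie-vector} and inducting on dimension.
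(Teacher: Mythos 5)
Your construction of the first closed geodesic is exactly the paper's: pick $u\in\mathcal{S}$, maximize $f_u(\cdot)=F(X^u(\cdot))$, apply Lemma \ref{preparation-lemma-Killing-geodesic}. But the second half of your argument has a genuine gap, and in fact pursues something stronger than the lemma asserts, which is false in general. You try to produce a second closed geodesic passing through a point off $c_1$ (i.e.\ geometrically distinct from $c_1$), and you yourself flag that you cannot guarantee the maximization does not return $c_1$ again. This cannot be repaired: on a Katok sphere \cite{Katok-1973} the connected isometry group has positive dimension, yet the only prime closed geodesics are a single circle traversed in its two directions, so no choice of $u\in\mathcal{S}$, perturbation, or induction on fixed-point sets will ever yield a closed geodesic with image different from $c_1$. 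The specific devices you propose also fail on their own terms: maximizing $f_u$ over the closure of a $\Gamma_u$-invariant neighbourhood disjoint from $c_1$ need not give a critical point of $f_u$ on $M$ (Lemma \ref{preparation-lemma-Killing-geodesic} requires genuine criticality, and the constrained maximum may lie on the boundary), and knowing $X^u(p)\neq 0$ at some $p\notin c_1$ says nothing about where the global maximum of $f_u$ sits.

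The paper's proof avoids all of this with one observation you missed: closed geodesics are counted with their direction specified (a reversible prime closed geodesic counts as two), so one uses the \emph{same} vector $u$ twice with reversed sign. Let $x_1$ be a maximum point of $F(X^u(\cdot))$ and $x_2$ a maximum point of $F(-X^u(\cdot))$; by Lemma \ref{preparation-lemma-Killing-geodesic} the integral curve of $X^u$ at $x_1$ and the integral curve of $-X^u$ at $x_2$ are closed geodesics, and they are distinct as directed closed geodesics, since they run along orbits of the same circle action in opposite senses and hence can never coincide as parametrized curves (if they share an image, they have opposite directions). This directed notion of distinctness is precisely what is needed later, e.g.\ in the proof of Lemma \ref{lemma-no-abel-factor}, where these two geodesics $\gamma_1,\gamma_2$ are shown to lie in different $\hat{G}$-orbits. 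So keep your first paragraph and replace the rest by the reversal trick.
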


\begin{proof}
Because $G=I_0(M,F)$ is a compact connected Lie group with a positive dimension, we can find a vector $u\in\mathfrak{g}=\mathrm{Lie}(G)$, which generates an $\mathrm{S}^1$-subgroup. Let $x_1\in M$
be a maximum point of the function $f_1(\cdot)=F(X^u(\cdot))$,
and $x_2$ a maximum point of the function $f_2(\cdot)=F(-X^u(\cdot))$. Then the integration curve of $X^u$ at
$x_1$ and the integratin curve of $-X^u$ at $x_2$ are two distinct
closed geodesics of $(M,F)$.
\end{proof}

We further assume

{\bf Assumption (I):} The Finsler space $(M,F)$ has only
one $\hat{G}$-orbit of prime closed geodesics, where $\hat{G}=G\times \mathrm{S}^1$ acts on the closed geodesics as in (\ref{hat-G-action-on-loops}).

When Assumption (I) is satisfied, by
Lemma \ref{lemma-1} and the connectedness of $\hat{G}$,
there must exist infinitely many geometrically distinct closed geodesics, i.e. the
$\hat{G}$-orbit of prime closed
geodesics, as a submanifold in $\Lambda M$, has a dimension bigger than 1.

Assumption (I) implies the following immediate consequences.

\begin{lemma}\label{lemma-no-abel-factor}
Assume $(M,F)$ with $\dim M>1$ and $\dim I_0(M,F)>0$ is a closed connected Finsler space satisfying Assumption (I), then $G=I_0(M,F)$ is semi-simple.
\end{lemma}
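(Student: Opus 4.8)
The plan is to argue by contradiction: suppose $G = I_0(M,F)$ is not semi-simple. Since $G$ is a compact connected Lie group, its Lie algebra splits as $\mathfrak{g} = \mathfrak{z} \oplus \mathfrak{g}'$ with $\mathfrak{z}$ the (nontrivial) center and $\mathfrak{g}'$ semi-simple; the center $\mathfrak{z}$ then contains a torus of positive dimension, all of whose one-parameter subgroups are central $\mathrm{S}^1$-subgroups of $G$. Pick $u \in \mathfrak{z}$ generating such a central $\mathrm{S}^1$. The key point I would exploit is that for a \emph{central} Killing field $X^u$, the norm function $f(\cdot) = F(X^u(\cdot))$ is invariant under the whole group $G$: for $g \in G$ we have $g_* X^u = X^{\mathrm{Ad}(g)u} = X^u$, and since $g$ is an isometry, $f(g\cdot x) = F((g^{-1})_* X^u(g \cdot x))$... more directly, $f \circ g = f$ because $g$ carries the flow of $X^u$ to the flow of $X^{\mathrm{Ad}(g)u} = X^u$ and preserves $F$. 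Hence $f$ is constant on $G$-orbits, and since $G$ acts with some orbit structure on $M$, I would use this to show $f$ is in fact locally constant along enough directions to force $X^u$ to have \emph{every} point as a critical point of $f$ — or at least a large critical set.

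The cleaner route: by Lemma~\ref{preparation-lemma-Killing-geodesic}, at any maximum point $x_1$ of $f$ (with $X^u(x_1) \neq 0$, which holds since $u$ is nonzero and central so $X^u$ is nowhere zero on the orbit through a generic point — indeed a central Killing field generating an $\mathrm{S}^1$ either vanishes identically or is nowhere zero, as its zero set is $G$-invariant hence a union of orbits, but it is also totally geodesic of lower dimension by Lemma~\ref{preparation-lemma-totally-geodesic}, and one checks the zero set must be empty when $G$ acts almost effectively with $u$ central and nonzero), the integral curve of $X^u$ through $x_1$ is a closed geodesic $c_1$. Now apply any $g \in G$: since $f \circ g = f$, the point $g \cdot x_1$ is also a maximum of $f$, so the integral curve of $X^u$ through $g \cdot x_1$ is again a closed geodesic. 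But this curve is $g \cdot c_1$ (up to reparametrization), because $g$ intertwines the flow of $X^u$ with itself. By Assumption~(I) all these closed geodesics lie in a single $\hat{G}$-orbit, so this is consistent — the contradiction must instead come from a \emph{dimension count} or from producing a second, incompatible family.

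Here is where I expect the real argument to live, and it is the main obstacle. The idea is to vary $u$ within the central torus $\mathfrak{z}$ and within $\mathfrak{g}'$. For $u \in \mathfrak{z}$ the resulting closed geodesics are all integral curves of central Killing fields, and these form a family parametrized (roughly) by the rational directions in $\mathfrak{z}$ together with the maximum set of each $f_u$; meanwhile, using Lemma~\ref{preparation-lemma-dense-rational-lie-vector} one also gets closed geodesics from $\mathrm{S}^1$-subgroups lying in $\mathfrak{g}'$. One shows these two families cannot all be conjugate under $\hat{G} = G \times \mathrm{S}^1$ to a single orbit: a closed geodesic that is the orbit of a central $\mathrm{S}^1$ is fixed (as a set) by all of $G$, whereas a generic closed geodesic arising from $\mathfrak{g}'$ is not $G$-invariant (its $G$-orbit in $\Lambda M$ has positive dimension, which it must by the remark following Assumption~(I) forcing the single orbit to have dimension $>1$). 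If \emph{every} prime closed geodesic were the orbit of a central $\mathrm{S}^1$, the whole $\hat{G}$-orbit would be $G$-fixed and hence have dimension $\leq \dim S^1 = 1$, contradicting that remark. Therefore some prime closed geodesic is \emph{not} a central orbit, yet it must be $\hat{G}$-equivalent to one that is — and $\hat{G}$-equivalence preserves the property of being $G$-invariant as a subset of $M$ (since $G$ is normal in $\hat{G}$). This contradiction shows $\mathfrak{z} = 0$, i.e. $G$ is semi-simple. The delicate step to get right is the claim that a central Killing field generating an $\mathrm{S}^1$ is nowhere zero on $M$ (using almost-effectiveness and Lemma~\ref{preparation-lemma-totally-geodesic}), and the claim that the single $\hat{G}$-orbit genuinely has dimension $>1$; both need the hypotheses $\dim M > 1$ and $\dim I_0(M,F) > 0$ together with Lemma~\ref{lemma-1}.
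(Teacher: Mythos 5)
Your opening moves coincide with the paper's (argue by contradiction, pick a nonzero central $u$ generating an $\mathrm{S}^1$, observe $g_*X^u=X^u$ for all $g\in G$), but your route to the contradiction breaks at the key claim that a closed geodesic which is an orbit of the central $\mathrm{S}^1$ is fixed as a subset of $M$ by all of $G$ (equivalently, fixed by $G$ in $\Lambda M$), so that if every prime closed geodesic were such an orbit the single $\hat{G}$-orbit would have dimension at most $1$. Centrality only gives $g\cdot\{\exp(tu)\cdot x_1\}=\{\exp(tu)\cdot(g x_1)\}$: an isometry $g$ carries the integral curve of $X^u$ through $x_1$ to the integral curve through $g x_1$, which is in general a different curve. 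So these geodesics are not $G$-invariant subsets, the $\hat{G}$-orbit of one of them can perfectly well have dimension greater than $1$ (it is essentially parametrized by the $G$-orbit of the maximum set of $f(\cdot)=F(X^u(\cdot))$), and no contradiction with the remark after Assumption (I) is obtained. Your auxiliary claim that a central Killing field generating an $\mathrm{S}^1$ is nowhere zero is also unjustified (for a non-round surface of revolution, $I_0(M,F)=\mathrm{S}^1$ is abelian and its Killing field vanishes at the poles), but it is in any case unnecessary: at a maximum point of $f$ one automatically has $X^u\neq0$, since $X^u\not\equiv0$ because $u\neq0$ and $G=I_0(M,F)$ acts effectively.

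The missing idea, which is the paper's actual proof and which you brushed past when you observed that the family $\{g\cdot c_1\}$ is ``consistent'' with Assumption (I), is to use the reversed field $-X^u$. Lemma \ref{lemma-1} produces two closed geodesics from the same central $u$: $\gamma_1$, the integral curve of $X^u$ at a maximum of $F(X^u(\cdot))$, and $\gamma_2$, the integral curve of $-X^u$ at a maximum of $F(-X^u(\cdot))$. Since $g_*X^u=X^u$ for every $g\in G$ and the $\mathrm{S}^1$-factor of $\hat{G}$ merely shifts the parameter, every loop in the $\hat{G}$-orbit of $\gamma_1$ has velocity a positive multiple of $X^u$ along itself; $\gamma_2$ has velocity a negative multiple of $X^u$ at points where $X^u\neq0$, so it cannot lie in that orbit (geodesics are counted with their directions, so reversibility does not rescue this). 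This contradicts Assumption (I) directly, with no dimension count and no nowhere-vanishing claim needed.
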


\begin{proof}
Assume conversely that $G=I_0(M,F)$ has a center of positive dimension,
which corresponds to the center $\mathfrak{c}(\mathfrak{g})$ of $\mathfrak{g}=\mathrm{Lie}(G)$. We can find a nonzero vector $u\in\mathfrak{c}(\mathfrak{g})$, which generates an $\mathrm{S}^1$-subgroup.
Let $X^u$ be the Killing vector field on $(M,F)$ defined by $u$.
In the proof of Lemma \ref{lemma-1}, we have shown two different prime closed geodesics $\gamma_1$ and $\gamma_2$, which are integration
curves of $X^u$ and $-X^u$ respectively. Because $u$ commutes with
$\mathfrak{g}$, for each $g\in G=I_0(M,F)$, $g_*X=X$. So each prime closed geodesic in the $\hat{G}$-orbit of $\gamma_1$ is also
an integration curve of $X^u$, i.e. $\gamma_2$ belongs to another orbit. This is a contradiction to Assumption (I), which ends the proof of the lemma.
\end{proof}

\begin{lemma}\label{lemma-2}
Assume $(M,F)$ with $\dim M>1$ and $\dim I_0(M,F)>0$ is a closed connected Finsler space satisfying Assumption (I), then
each closed geodesic on $(M,F)$
is homogeneous, i.e. the orbit of a one-parameter subgroup in $G$.
\end{lemma}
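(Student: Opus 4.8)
The plan is to first exhibit one homogeneous prime closed geodesic via the critical-point construction of Lemma \ref{preparation-lemma-Killing-geodesic}, and then to conclude that \emph{every} prime closed geodesic is homogeneous from Assumption (I) together with the elementary observation that the $\hat{G}$-action (\ref{hat-G-action-on-loops}) sends a homogeneous closed geodesic to a homogeneous closed geodesic. The non-prime case then follows because iterates of homogeneous closed geodesics are again homogeneous.

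\emph{Step 1.} Since $\dim G>0$, Lemma \ref{preparation-lemma-dense-rational-lie-vector} provides a nonzero $u\in\mathfrak{g}$ generating an $\mathrm{S}^1$-subgroup. As $G=I_0(M,F)$ acts effectively on $M$, the Killing field $X^u$ is not identically zero, so (as in the proof of Lemma \ref{lemma-1}) a maximum point $x_0$ of $f(\cdot)=F(X^u(\cdot))$ satisfies $X^u(x_0)\neq 0$ and is a critical point of $f$; by Lemma \ref{preparation-lemma-Killing-geodesic} the integration curve $s\mapsto\exp(su)\cdot x_0$ is then a closed geodesic (it has constant speed since $f$ is constant along flow lines of the Killing field $X^u$). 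Rescaling $u$ so that $\exp(\mathbb{R}u)$ has period $1$, this geodesic becomes an element $t\mapsto\exp(tv)\cdot x_0$ of $\Lambda M$, and the prime closed geodesic $\gamma_0$ underlying it has the same image, hence is of the form $t\mapsto\exp(tv_0)\cdot x_0$ with $v_0\in\mathbb{R}v$ still generating an $\mathrm{S}^1$-subgroup. Thus $\gamma_0$ is a homogeneous prime closed geodesic.

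\emph{Step 2.} Let $\gamma$ be an arbitrary prime closed geodesic. By Assumption (I) there is $(\rho,t')\in\hat{G}=G\times\mathrm{S}^1$ with $\gamma=(\rho,t')\cdot\gamma_0$, so by (\ref{hat-G-action-on-loops}) and the identity $\rho\exp(tv_0)\rho^{-1}=\exp(t\,\mathrm{Ad}(\rho)v_0)$,
$$\gamma(t)=\rho\bigl(\gamma_0(t+t')\bigr)=\rho\bigl(\exp(tv_0)\exp(t'v_0)\cdot x_0\bigr)=\exp\bigl(t\,\mathrm{Ad}(\rho)v_0\bigr)\cdot\bigl(\rho\exp(t'v_0)\cdot x_0\bigr).$$
Hence $\gamma$ is the orbit of the one-parameter subgroup generated by $\mathrm{Ad}(\rho)v_0$, i.e. $\gamma$ is homogeneous. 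Finally, a non-prime closed geodesic is an iterate $c(t)=\gamma(mt)$ of a prime one; if $\gamma(t)=\exp(tw)\cdot x$ then $c(t)=\exp(tmw)\cdot x$ is again an orbit of a one-parameter subgroup. Therefore every closed geodesic on $(M,F)$ is homogeneous.

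\emph{Main difficulty.} I do not expect a genuine obstacle here: once Lemma \ref{preparation-lemma-Killing-geodesic} and the single-orbit hypothesis are in hand, the proof is essentially formal. The only points demanding a little care are bookkeeping ones — normalizing the integration curve to lie in $\Lambda M$, passing from the geodesic produced by the maximum-point construction to the underlying prime geodesic (using that it has the same image and so is a reparametrized orbit of the same $\mathrm{S}^1$-subgroup), and the conjugation identity for $\mathrm{Ad}$ — none of which is substantive.
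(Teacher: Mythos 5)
Your proof is correct and rests on the same ingredients as the paper's: a homogeneous closed geodesic obtained from a Killing field generating an $\mathrm{S}^1$-subgroup at a maximum point of $F(X^u(\cdot))$ (Lemma \ref{preparation-lemma-Killing-geodesic}), combined with Assumption (I) and the observation that the $\hat{G}$-action preserves homogeneity of geodesics. The paper simply phrases the same argument as a contradiction (a non-homogeneous closed geodesic would force every closed geodesic in the single orbit to be non-homogeneous), whereas you argue directly and make explicit the conjugation identity and the prime/iterate bookkeeping that the paper leaves implicit.
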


\begin{proof}
Assume conversely that there exists a non-homogeneous closed geodesic on $(M,F)$.
Then Assumption (I) implies that all closed geodesics
on $(M,F)$ are non-homogeneous. Because $G=I_0(M,F)$ is a compact connected Lie group with a positive dimension, we can
find a nonzero vector $u\in\mathfrak{g}=\mathrm{Lie}(G)$ which generates an $\mathrm{S}^1$-subgroup.
Denote $X^u$ the Killing vector field defined by $u$. By Lemma \ref{preparation-lemma-Killing-geodesic}, at any maximum $x\in M$ for the function $f(\cdot)=F(X^u(\cdot))$, the integration curve of $X^u$ provides a homogeneous prime closed geodesic. This is a contradiction, which
ends the proof of the Lemma.
\end{proof}

Lemma \ref{lemma-2} implies that the $\mathrm{S}^1$-action along each
prime closed geodesic $c(t)$ shifting the variable $t$ can be
achieved by the $G$-action, so we have immediate that $\hat{G}$-orbit of prime closed geodesics in Assumption (I) is also a $G$-orbit. Further more, the union $N$ of all the closed geodesics
is a $G$-orbit in $M$, and
the induced submanifold metric $F|_N$ is
$G$-invariant.

\begin{lemma} \label{lemma-3}
Assume $(M,F)$ with $\dim M>1$ and $\dim I_0(M,F)>0$ is a closed connected Finsler space
satisfying Assumption (I). Let $N$ be the union of all the closed geodesics.
Then the $G$-action on $N$ is almost effective.
\end{lemma}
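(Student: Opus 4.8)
The plan is to argue by contradiction: if the $G$-action on $N$ fails to be almost effective, then I will manufacture a closed geodesic of $(M,F)$ that passes through a point lying outside $N$, which is absurd since $N$ is by definition the union of all closed geodesics. Recall that $N$ is $G$-invariant, because isometries carry closed geodesics to closed geodesics.

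Suppose the kernel $K$ of the $G$-action on $N$ has positive dimension. Then $\mathfrak{k}=\mathrm{Lie}(K)$ is a nonzero subalgebra of $\mathfrak{g}$, and for every $u\in\mathfrak{k}$ the one-parameter subgroup $\exp(tu)$ fixes $N$ pointwise, so the Killing vector field $X^u$ it defines vanishes identically on $N$. Applying Lemma \ref{preparation-lemma-dense-rational-lie-vector} to the compact connected Lie group $K$, I can choose a nonzero $u\in\mathfrak{k}$ that generates an $\mathrm{S}^1$-subgroup of $K$, hence of $G$.

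Now consider the continuous function $f(\cdot)=F(X^u(\cdot))\geq 0$ on the compact manifold $M$. Since $G=I_0(M,F)$ acts effectively on $M$ and $u\neq 0$, the field $X^u$ is not identically zero, so $f$ is not identically zero and attains a positive maximum at some $x_1\in M$, at which necessarily $X^u(x_1)\neq 0$. By Lemma \ref{preparation-lemma-Killing-geodesic}, the integration curve of $X^u$ through $x_1$ is then a closed geodesic of $(M,F)$, and in particular it is contained in $N$. Hence $x_1\in N$, where $X^u$ vanishes, contradicting $X^u(x_1)\neq 0$. Therefore $K$ is discrete, i.e. the $G$-action on $N$ is almost effective.

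The argument is a variant of the ones already used for Lemma \ref{lemma-1} and Lemma \ref{lemma-2}, and I do not expect a serious obstacle; the only point requiring care is the observation that the homogeneous closed geodesic produced by Lemma \ref{preparation-lemma-Killing-geodesic} necessarily passes through the chosen maximum point $x_1$, which is exactly what forces $x_1\in N$ and produces the contradiction. Note in particular that this step uses neither semisimplicity (Lemma \ref{lemma-no-abel-factor}) nor Assumption (I) beyond the fact, automatic here, that $N$ is $G$-invariant.
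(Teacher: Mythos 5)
Your proof is correct and follows essentially the same route as the paper: assume the kernel of the $G$-action on $N$ has positive dimension, pick a vector there generating an $\mathrm{S}^1$-subgroup via Lemma \ref{preparation-lemma-dense-rational-lie-vector}, and use Lemma \ref{preparation-lemma-Killing-geodesic} at a maximum of $F(X^u(\cdot))$ to produce a closed geodesic through a point where $X^u\neq 0$, hence outside $N$, a contradiction. Your version merely spells out explicitly why the maximum point lies outside $N$ (and that effectiveness of $I_0(M,F)$ on $M$ makes $X^u\not\equiv 0$), which the paper leaves implicit.
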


\begin{proof}
Assume conversely that the $G$-action on $N$ is not almost effective, i.e. there exists a closed subgroup $G'$ in $G$ with
a positive dimension, which acts trivially on $N$. Then we can find a vector
$u\in\mathfrak{g}'=\mathrm{Lie}(G')$ which generates an $\mathrm{S}^1$-subgroup, and defines a Killing
vector field $X^u$ on $(M,F)$.
By Lemma \ref{preparation-lemma-Killing-geodesic},
the integration curve of $X^u$ at any maximum point $x\in M$
provides a prime closed geodesic of $(M,F)$ outside $N$. This is a contradiction with Assumption (I) which proves the lemma.
\end{proof}

We denote $H$ the isotropy subgroup at some $o\in M$, then we can
identify $N$ with $G/H$ with $o=eH\in G/H$. By Lemma \ref{lemma-3}, $\mathfrak{h}=\mathrm{Lie}(H)$ contains no nonzero ideals of $\mathfrak{g}=\mathrm{Lie}(G)$.
We fix a reductive decomposition $\mathfrak{g}=\mathfrak{h}+\mathfrak{m}$ for $G/H$ which is orthogonal with respect to a chosen bi-invariant inner product on $\mathfrak{g}$. When we identify $\mathfrak{m}$ with the tangent
space $T_oM$, the $\mathrm{Ad}(H)$-action on $\mathfrak{m}$ coincides with the isotropy action.
 The key observation is the following rank inequality.

\begin{theorem}\label{thm-rank-inequality}
Assume $(M,F)$ with $\dim M>1$ is a closed connected Finsler space with a connected isometry group $G=I_0(M,F)$ of positive dimension and only one $\hat{G}$-orbit of prime closed geodesics. Then $N=G/H$, the union of all prime closed geodesics on $(M,F)$, satisfies $\mathrm{rk}\mathfrak{g}\leq\mathrm{rk}\mathfrak{h}+1$. To be more
precise, $\mathrm{rk}\mathfrak{g}=\mathrm{rk}\mathfrak{h}$ when $\dim N$ is even, and $\mathrm{rk}\mathfrak{g}=\mathrm{rk}\mathfrak{h}+1$ when $\dim N$ is odd.
\end{theorem}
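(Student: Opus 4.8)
The plan is to analyze a single prime closed geodesic $\gamma$ through $o$ via the Killing field that traces it, and to extract rank information from how the isotropy group $H$ (equivalently, the stabilizer of $\gamma$ up to rotation) interacts with a maximal torus of $G$. By Lemma~\ref{lemma-2}, $\gamma$ is the orbit of a one-parameter subgroup $\exp(tu)$ with $u\in\mathfrak{g}$; by Lemma~\ref{preparation-lemma-dense-rational-lie-vector} we may perturb within the (open, dense, $\mathrm{Ad}(G)$-invariant) set generating $\mathrm{S}^1$-subgroups and so assume $u$ lies in a Cartan subalgebra $\mathfrak{t}$ of $\mathfrak{g}$ and generates a circle $T_u=\overline{\exp(\mathbb{R}u)}$. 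The first step is to identify, inside $\hat{G}=G\times\mathrm{S}^1$, the stabilizer of $\gamma$ as an element of the single orbit $\mathcal{O}\cong\hat{G}/\hat{H}$; since $\mathcal{O}$ is one orbit, $\dim\mathcal{O}=\dim\hat{G}-\dim\hat{H}$ and, crucially, the homotopy/cohomology of $\mathcal{O}$ is that of $\hat{G}/\hat{H}$.

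The key quantitative input should come from comparing two dimension counts for $N=G/H$: the count coming from $N$ being filled up by closed geodesics all lying in one $G$-orbit, and the count of the geodesic's own "rotational" degree of freedom. Concretely, I would argue that the circle $T_u$ acts on $N$ fixing no point not on $\gamma$ in a controlled way, and that the stabilizer in $G$ of the oriented prime geodesic $\gamma$ (not up to rotation) is exactly $H$ together with the rotation circle's image; this forces $T_u\cdot H$ to be, up to finite index, a subgroup whose identity component has rank $\mathrm{rk}\,\mathfrak{h}+1$ if $u\notin\mathfrak{h}+[\text{something}]$, and rank $\mathrm{rk}\,\mathfrak{h}$ otherwise. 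The parity of $\dim N$ enters through the isotropy representation of $H$ on $\mathfrak{m}=T_oN$: the velocity direction $\dot\gamma(0)\in\mathfrak{m}$ is fixed (up to sign) by $H$, so $H$ acts on $\mathfrak{m}$ preserving a line $\mathbb{R}\dot\gamma(0)$ and its complement $\mathfrak{m}'$ of dimension $\dim N-1$; when $\dim N$ is odd, $\dim\mathfrak{m}'$ is even and the circle generated (inside $G$) by the geodesic's motion acts on $\mathfrak{m}'$ with no nonzero fixed vector, i.e. it contributes a genuinely extra torus direction transverse to $\mathfrak{h}$, giving $\mathrm{rk}\,\mathfrak{g}=\mathrm{rk}\,\mathfrak{h}+1$; when $\dim N$ is even, $\dim\mathfrak{m}'$ is odd, so any torus in $G$ acting on $\mathfrak{m}'$ must fix a nonzero vector there — but a vector in $\mathfrak{m}'$ fixed by a maximal torus, together with $\dot\gamma(0)$, would be fixed by too much of $G$, and one plays this against Lemma~\ref{preparation-lemma-totally-geodesic} (fixed sets are totally geodesic) and Assumption~(I) to force $\mathrm{rk}\,\mathfrak{g}=\mathrm{rk}\,\mathfrak{h}$.

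In more detail, the steps in order would be: (i) fix $o\in N$, $\gamma$ the prime closed geodesic through $o$ in the direction $v=\dot\gamma(0)$, realized as the orbit of $\exp(tu)$ with $u$ generating a circle; (ii) show $\mathrm{Ad}(H)$ fixes the line $\mathbb{R}v\subset\mathfrak{m}$ (because $H$ stabilizes $o$ and permutes closed geodesics through $o$, of which there is essentially one up to orientation, using Assumption~(I)); (iii) decompose $\mathfrak{m}=\mathbb{R}v\oplus\mathfrak{m}'$ as $\mathrm{Ad}(H)$-modules and note $\dim\mathfrak{m}'=\dim N-1$; (iv) take a maximal torus $\mathfrak{t}$ of $\mathfrak{g}$; if $\mathfrak{t}\cap\mathfrak{h}$ is already maximal in $\mathfrak{h}$, relate $\dim\mathfrak{t}-\dim(\mathfrak{t}\cap\mathfrak{h})$ to the fixed-point structure of $\mathfrak{t}$ on $\mathfrak{m}$; (v) use the classical fact that a torus acting linearly on an odd-dimensional space fixes a nonzero vector, applied to its action on $\mathbb{R}v\oplus\mathfrak{m}'$, to pin down the parity statement; (vi) rule out $\mathrm{rk}\,\mathfrak{g}\geq\mathrm{rk}\,\mathfrak{h}+2$ by observing that two independent torus directions transverse to $\mathfrak{h}$ would produce a second, geometrically distinct family of closed geodesics (via Lemma~\ref{preparation-lemma-Killing-geodesic} applied to a generic Killing field in that $2$-torus), contradicting Assumption~(I) exactly as in the proofs of Lemmas~\ref{lemma-no-abel-factor}--\ref{lemma-3}.

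The main obstacle I anticipate is step (vi), i.e. proving the inequality $\mathrm{rk}\,\mathfrak{g}\leq\mathrm{rk}\,\mathfrak{h}+1$ rather than just the parity refinement: one must show that \emph{every} extra rank direction beyond $\mathrm{rk}\,\mathfrak{h}$ is "used up" by the single orbit of closed geodesics, so that a second one is impossible. The clean way to do this is probably to show that the closed geodesics through a fixed point $o$, being all tangent to the $\mathrm{Ad}(H)$-fixed line $\mathbb{R}v$, are parametrized (up to orientation) by at most a finite set, hence the "rotational" $\mathrm{S}^1$ is essentially unique, and then a torus $S\subset G$ with $\mathrm{rk}\,S=\mathrm{rk}\,\mathfrak{h}+2$, after modding out a conjugate of the identity component of $H\cap S$, would still have a $2$-torus quotient acting on $M$ with all orbits closed geodesics of varying directions at $o$ — contradicting that $\dim(\mathbb{R}v)=1$. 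Making this last comparison rigorous, especially keeping track of finite groups and the difference between stabilizing $\gamma$ as a set versus as a parametrized-and-oriented loop, is where the real care is needed; the rest is linear algebra of torus representations plus the totally-geodesic fixed-point principle already recorded in Lemma~\ref{preparation-lemma-totally-geodesic}.
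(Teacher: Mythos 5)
Your proposal breaks down at step (ii), and this flaw propagates through the parity argument (iii)--(v) and the inequality argument (vi). Assumption (I) says there is one $\hat{G}$-orbit of prime closed geodesics; it does \emph{not} say there is essentially one closed geodesic through $o$ up to orientation. The set of unit directions at $o$ tangent to closed geodesics is an $\mathrm{Ad}(H)$-orbit (the set $\mathcal{C}$ of Section 5), which is in general positive-dimensional: on the round sphere $\mathrm{S}^n=\mathrm{SO}(n+1)/\mathrm{SO}(n)$, every direction at $o$ is tangent to a closed geodesic and $\mathrm{Ad}(H)$ acts transitively on the unit sphere of $\mathfrak{m}$, fixing no line. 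So the decomposition $\mathfrak{m}=\mathbb{R}v\oplus\mathfrak{m}'$ is not $\mathrm{Ad}(H)$-invariant, the torus-fixes-a-vector parity argument has no invariant odd-dimensional complement to act on, and the final contradiction in (vi) (``all orbits closed geodesics of varying directions at $o$ --- contradicting $\dim(\mathbb{R}v)=1$'') evaporates. In addition, even granting your setup, (vi) is not carried out: producing a closed geodesic from a generic Killing field in a $2$-torus via Lemma \ref{preparation-lemma-Killing-geodesic} does not by itself contradict Assumption (I) unless you can show it lies in a \emph{different} orbit, and for a general (non-homogeneous) $M$ this is exactly the hard point; the length-divergence arguments you implicitly lean on (as in Propositions \ref{prop-rank-bigger-than-1}--\ref{prop-rank-bigger-than-one-more}) are proved in the paper only in the homogeneous setting, whereas Theorem \ref{thm-rank-inequality} makes no homogeneity assumption on $M$.

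The paper's proof uses a different, and essentially unavoidable, mechanism that your proposal is missing. Fix one prime closed geodesic $c$ and let $H_1\subset G$ be the group of isometries moving $c$ by parameter shifts, $H_2\subset H_1$ those fixing $c$ pointwise, so $H_2=H_1\cap H$ is normal in $H_1$ with $H_1/H_2\cong \mathrm{S}^1$. One shows that $\mathcal{U}=\bigcup_{g\in G}\mathrm{Ad}(g)\mathfrak{h}_1$ is closed (compactness of $G$ and boundedness of conjugating sequences) and then that $\mathcal{U}=\mathfrak{g}$: otherwise, by density of circle-generating vectors (Lemma \ref{preparation-lemma-dense-rational-lie-vector}) there is a circle-generating $u\notin\mathcal{U}$, whose Killing field is tangent to no closed geodesic by Assumption (I), yet Lemma \ref{preparation-lemma-Killing-geodesic} produces a closed geodesic that is an integral curve of $X^u$ --- a contradiction. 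Hence $\mathfrak{h}_1$ contains a generic vector generating a dense winding of a maximal torus, so $\mathrm{rk}\,\mathfrak{h}_1=\mathrm{rk}\,\mathfrak{g}$, and $\mathrm{rk}\,\mathfrak{h}\geq\mathrm{rk}\,\mathfrak{h}_2=\mathrm{rk}\,\mathfrak{h}_1-1=\mathrm{rk}\,\mathfrak{g}-1$. The precise even/odd statement then follows from the elementary parity fact $\dim G/H\equiv \mathrm{rk}\,\mathfrak{g}-\mathrm{rk}\,\mathfrak{h}\ (\mathrm{mod}\ 2)$ (as in Wallach's argument), with no need for the fixed-vector analysis you sketch. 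If you want to salvage your outline, you would have to replace (ii) by the correct statement that $\mathcal{C}$ is an $\mathrm{Ad}(H)$-orbit and find a substitute for the conjugation-closedness argument; as written, the proposal does not prove the inequality.
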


\begin{proof} Let $c(t):\mathbb{R}/\mathbb{Z}\rightarrow M$ be any prime closed geodesic passing $o=eH\in M$. Define two subgroups of $G$,
\begin{eqnarray*}
H_1&=& \{\rho\in I_0(M,F)|\exists t_0\in\mathbb{R}/\mathbb{Z}\mbox{ with }
\rho(c(t))=c(t+t_0),\forall t\},\mbox{ and}\\
H_2&=&\{\rho\in I_0(M,F)| \rho(c(t))=c(t),\forall t\}.
\end{eqnarray*}
Obviously, both are compact, and $H_2=H_1\cap H$ is a normal subgroup of $H_1$ with an $\mathrm{S}^1$-quotient. Denote their Lie algebras as $\mathfrak{h}_i=\mathrm{Lie}(H_i)$ for $i=1$ and $2$ respectively.

Firstly, we claim

{\bf Claim 1:}
the union of all the $\mathrm{Ad}(G)$-conjugations
of $\mathfrak{h}_1$, i.e. $\mathcal{U}=\cup_{g\in G}\mathrm{Ad}(g)\mathfrak{h}_1$, is a closed subset in $\mathfrak{g}$.

{\bf Proof of Claim 1.}
Let $\{\mathrm{Ad}(g_i)x_i\}\subset\mathcal{U}$
be a convergent sequence in $\mathfrak{g}$, with
$g_i\in G$ and $x_i\in\mathfrak{h}_1$ for each $i\in\mathbb{N}$.
By the compactness of $G$, we may assume $\lim_{i\rightarrow\infty} g_i=g\in G$ by taking a subsequence. The
$\mathrm{Ad}(G)$-actions preserve the chosen bi-invariant inner product on $\mathfrak{g}$, so $\{x_i\}$ is a bounded sequence in
$\mathfrak{h}_1$. Taking a subsequence, we may further assume
$\lim_{i\rightarrow \infty}x_i=x\in\mathfrak{h}_1$. To summarize,
we have $\lim_{i\rightarrow\infty}\mathrm{Ad}(g_i)x_i=\mathrm{Ad}(g)x\in
\mathcal{U}$, which proves the closeness of $\mathcal{U}$, i.e. Claim 1.

Secondly, we claim

{\bf Claim 2:}
$\mathcal{U}=\cup_{g\in G}\mathrm{Ad}(g)\mathfrak{h}_1$ coincides with $\mathfrak{g}$.

{\bf Proof of Claim 2.}
Assume conversely $\mathcal{U}\neq\mathfrak{g}$. By Lemma \ref{preparation-lemma-dense-rational-lie-vector} and Claim 1,
we can find a vector $u\in\mathfrak{g}\backslash\mathcal{U}$
which generates an $\mathrm{S}^1$-subgroup. By Assumption (I),
the Killing vector field $X^u$ on $(M,F)$ is
not tangent to any prime closed geodesics of $(M,F)$. By
Lemma \ref{preparation-lemma-Killing-geodesic} again, its integration curve at the maximum point of $f(\cdot)=F(X(\cdot))$
provides a prime closed geodesic outside $N$. This is a contradiction which ends the proof of Claim 2.

Finally, we finish the proof of the theorem. By Claim 2,
$\mathfrak{h}_1$ contains a generic vector in $\mathfrak{g}$
which generates a dense one-parameter subgroup in a maximal torus
of $G$. So $H_1$ must contain a maximal torus of $G$, i.e.
$\mathrm{rk}\mathfrak{h}_1=\mathrm{rk}\mathfrak{g}$. Then we have
$$\mathrm{rk}\mathfrak{h}\geq\mathrm{rk}\mathfrak{h}_2=
\mathrm{rk}\mathfrak{h}_1-1=\mathrm{rk}\mathfrak{g}-1,$$
which proves the first statement in the theorem.

The other statements follow the first one, and the same argument in the classification of positively curved homogeneous spaces. See
for example the proof of Theorem 4.1 in \cite{Wa1972}.

This ends the proof of the theorem.
\end{proof}

\label{section-rank-inequality}
\section{Homogeneous examples for Assumption (I)}
\label{Section-homogeneous-examples}

Since this section, we concentrate on compact connected homogeneous Finsler
spaces satisfying Assumption (I), i.e. with only one orbit
of prime closed geodesics.

Before the systematical discussion, we study some important examples, satisfying or not satisfying Assumption (I).

{\bf Example 1.} All compact simply connected Riemannian rank-one symmetric spaces satisfy Assumption (I). They can be listed as
\begin{eqnarray}\label{list-cross}
\mathrm{S}^n=\mathrm{SO}(n+1)/\mathrm{SO}(n)\mbox{ with } n>1, & &
\mathbb{C}\mathrm{P}^n={SU}(n+1)/S(\mathrm{U}(1)\mathrm{U}(n)),\nonumber\\
\mathbb{H}\mathrm{P}^n=\mathrm{Sp}(n+1)/\mathrm{Sp}(1)\mathrm{Sp}(n), &\mbox{and}&
\mathbb{O}\mathrm{P}^2=\mathrm{F}_4/\mathrm{Spin}(9).
\end{eqnarray}
Notice that on $\mathrm{S}^6=\mathrm{G}_2/\mathrm{SU}(3)$ and $\mathrm{S}^7=\mathrm{Spin}(7)/\mathrm{G}_2$, any invariant Finsler metrics on them must be Riemannian symmetric.

More generally, we have

\begin{proposition}\label{prop-classification-local}
Any compact connected Riemannian manifold locally isometric to a
compact rank-one symmetric space and satisfying Assumption
(I) must be one of the Riemannian rank-one symmetric spaces, i.e. for all $n>1$, $\mathrm{S}^n$, $\mathbb{R}\mathrm{P}^n$, $\mathbb{C}\mathrm{P}^n$,
$\mathbb{H}\mathrm{P}^n$, and $\mathbb{O}\mathrm{P}^2$.
\end{proposition}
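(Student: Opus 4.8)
The plan is to show that, among the compact connected Riemannian manifolds locally isometric to a compact rank-one symmetric space (CROSS), only the globally symmetric ones — i.e.\ $\mathrm{S}^n$, $\mathbb{R}\mathrm{P}^n$, $\mathbb{C}\mathrm{P}^n$, $\mathbb{H}\mathrm{P}^n$, $\mathbb{O}\mathrm{P}^2$ — can satisfy Assumption (I). So let $(M,g)$ be such a manifold, with universal cover $\widetilde{M}$ one of the simply connected CROSSes from (\ref{list-cross}) (this is the classification of space forms in the rank-one case), and write $M=\widetilde{M}/\Gamma$ with $\Gamma$ a finite group of deck transformations acting freely by isometries. If $\Gamma$ is trivial or $M=\mathbb{R}\mathrm{P}^n$ we are done, so assume for contradiction that $\Gamma$ is nontrivial and $M$ is not $\mathbb{R}\mathrm{P}^n$. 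The first step is to observe that, since all the listed $\widetilde{M}$ are Riemannian symmetric of rank one, all their prime geodesics are closed, and on $\widetilde{M}$ they form a single $\hat{G}$-orbit under $\widetilde{G}=I_0(\widetilde{M})$; the closed geodesics on $M$ are precisely the images of those on $\widetilde{M}$, and Assumption (I) for $M$ forces the (finite) family of downstairs closed-geodesic orbits to collapse to one. I would exploit this by comparing lengths: on $\widetilde{M}$ all prime closed geodesics have the same length $2\ell$, while the projection of a prime geodesic of $\widetilde{M}$ down to $M$ is prime of length $2\ell/k$ for some integer $k\geq 1$ dividing the order of an element of $\Gamma$, and $k>1$ occurs exactly when the geodesic is preserved setwise by a nontrivial deck transformation.

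The core of the argument is then a case analysis of which deck groups $\Gamma$ are possible and whether the resulting $M$ can have only one orbit of prime closed geodesics. For $\widetilde{M}=\mathrm{S}^n$ the free finite isometric quotients are the spherical space forms; I would split into $\Gamma=\mathbb{Z}_2$ (giving $\mathbb{R}\mathrm{P}^n$, already excluded) and $|\Gamma|>2$. When $|\Gamma|>2$ one produces two prime closed geodesics of different lengths — for instance a geodesic that is an orbit of a circle subgroup containing a generator of a cyclic factor of $\Gamma$ projects to a short closed geodesic, whereas a generic geodesic in the totally geodesic $\mathrm{S}^1$ it bounds, or a geodesic moved nontrivially off itself by $\Gamma$, projects to one of length $2\ell$ — and distinct lengths force distinct $\hat{G}$-orbits, contradicting Assumption (I). For $\mathbb{C}\mathrm{P}^n$, $\mathbb{H}\mathrm{P}^n$ and $\mathbb{O}\mathrm{P}^2$ I would instead note that these are already simply connected (for $\mathbb{H}\mathrm{P}^n$ and $\mathbb{O}\mathrm{P}^2$ this is immediate; for $\mathbb{C}\mathrm{P}^n$ one checks any free finite isometry quotient either is trivial or changes the rational cohomology ring / the fundamental group in a way incompatible with being covered by $\mathbb{C}\mathrm{P}^n$ with a free action — in fact $I(\mathbb{C}\mathrm{P}^n)$ has no nontrivial subgroup acting freely), so $\Gamma$ is trivial and $M=\widetilde{M}$.

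A cleaner way to package the length argument, which I would prefer to present, is via the rank inequality of Theorem \ref{thm-rank-inequality} together with the homogeneity of closed geodesics from Lemma \ref{lemma-2}: since $M$ is itself a CROSS-quotient it is in particular a closed homogeneous space $G/H$ with $G=I_0(M)$, and one checks that a nontrivial $\Gamma$ either violates the homogeneity conclusion (a non-homogeneous closed geodesic appears — e.g.\ on a lens space not all geodesics are orbits of one-parameter subgroups of the reduced isometry group) or, after identifying $N=M=G/H$, one simply invokes $\mathrm{rk}\,\mathfrak{g}=\mathrm{rk}\,\mathfrak{h}(+1)$ and the explicit list of such pairs $(\mathfrak{g},\mathfrak{h})$ realized by CROSS-quotients to see the only survivors are the symmetric ones. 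The main obstacle, and the step needing the most care, is the $\mathrm{S}^n$ case with a large deck group: one must actually exhibit two closed geodesics of incommensurable length (or otherwise in different $\hat{G}$-orbits) for an \emph{arbitrary} free finite isometric action on $\mathrm{S}^n$, handling noncyclic $\Gamma$ and $\Gamma$ with fixed-point-free but non-free-looking behavior on great circles; the combinatorics of which great circles are $\Gamma$-invariant is the delicate point, and this is precisely where one leans on the rank inequality to shortcut the geometry. The other steps — the classification of the universal cover, the passage of closed geodesics to quotients, and the rational-vector density input from Lemma \ref{preparation-lemma-dense-rational-lie-vector} — are routine.
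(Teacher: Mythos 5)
Your reduction to the universal cover and the length comparison are fine, but the case analysis contains a genuine gap, and at one point an outright false claim. You dismiss $\mathbb{C}\mathrm{P}^n$, $\mathbb{H}\mathrm{P}^n$, $\mathbb{O}\mathrm{P}^2$ by asserting that they admit no nontrivial finite free isometric quotients, ``in fact $I(\mathbb{C}\mathrm{P}^n)$ has no nontrivial subgroup acting freely.'' This is false for $\mathbb{C}\mathrm{P}^{2m+1}$: viewing $\mathbb{C}^{2m+2}=\mathbb{H}^{m+1}$, right multiplication by $j$ induces an anti-holomorphic isometric involution of $\mathbb{C}\mathrm{P}^{2m+1}$ without fixed points (a fixed point would be a complex line $L$ with $vj=v\lambda$, forcing $-v=vj^2=v|\lambda|^2$). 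So non-simply connected manifolds locally isometric to $\mathbb{C}\mathrm{P}^{2m+1}$ do exist, and they must be excluded by an argument about closed geodesics, not by nonexistence. The paper does exactly this: Assumption (I) gives a common length for all prime closed geodesics downstairs, hence every geodesic of $\tilde{M}$ through $x$ must meet $\pi^{-1}(\pi(x))\setminus\{x\}$; running this through the totally geodesic $\mathbb{K}\mathrm{P}^1$ spheres joining $x$ to an arbitrary point $x'$ of its cut locus forces the whole (infinite) cut locus into the finite fiber, a contradiction. Your proposal has no substitute for this step.

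The sphere case with $|\Gamma|>2$ is also not secured as written. Your dichotomy ``an invariant circle projects to a short geodesic, a generic circle projects to one of length $2\ell$'' fails whenever $-\mathrm{id}\in\Gamma$, since then every great circle is preserved by a nontrivial deck transformation and no prime closed geodesic downstairs has length $2\ell$; one must instead compare a circle with generic stabilizer against one with strictly larger stabilizer, or argue as the paper does (every great circle through $x$ must return to the finite fiber, which by counting forces the fiber to be the antipodal pair, i.e.\ $M=\mathbb{R}\mathrm{P}^n$). Your proposed shortcut via Theorem \ref{thm-rank-inequality} and Lemma \ref{lemma-2} cannot close this hole: the proposition concerns possibly non-homogeneous quotients, and even in the homogeneous setting the rank data do not separate, say, the lens spaces $L(p;1)$ from $\mathrm{S}^{2n-1}$ (they realize the same pairs $(\mathfrak{g},\mathfrak{h})$ up to the relevant rank relation); indeed the paper invokes Proposition \ref{prop-classification-local} precisely because the Lie-theoretic arguments only determine $M$ up to local isometry. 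So the geometric fiber/length argument is not a dispensable technicality but the content of the proof, and your proposal as it stands does not supply it.
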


\begin{proof}
Assume that $M$ is a locally isometric to a compact Riemannian rank-one symmetric space, non-simply connected, and satisfies Assumption (I).

We have a locally isometric finite covering map $\pi:\tilde{M}\rightarrow M$
such that $\tilde{M}$ is one of (\ref{list-cross}). All geodesics on $M$ are closed, and all prime closed geodesics on $M$ have the same length. So any geodesic passing $x\in \tilde{M}$ must contain points
in $\pi^{-1}(\pi(x))\backslash\{x\}\neq\emptyset$.

When $\tilde{M}=\mathrm{S}^n$ with $n>1$, $\pi^{-1}(x)\backslash\{x\}$ must coincide with the antipodal point $x'$ of $x$, otherwise $\pi^{-1}(\pi(x))$ is an infinite set,
which is a contradiction. So in this case $M$ is a Riemannian symmetric $\mathbb{R}\mathrm{P}^n$.

When $\tilde{M}=\mathbb{K}\mathrm{P}^n$ with $\mathbb{K}=\mathbb{C}$, $\mathbb{H}$ and $n\geq 2$ or $\mathbb{K}=\mathbb{O}$ and $n=2$,
for any $x\in M$ and any $x'$ in the cut locus of $x$, we can
find a sphere $M'=\mathbb{K}\mathrm{P}^1$ of constant curvature, such that $M'$ is
imbedded in $\tilde{M}$ as a totally geodesic sub-manifold and $x$ and $x'$ are contained in $M'$ as a pair of antipodal points.
As for the previous case, Assumption (I) and the finiteness of
$\pi^{-1}(\pi(x))$ implies $\pi^{-1}(\pi(x))$ must contain any point $x'$ in the cut locus of $x$. This is a contradiction to the finiteness of
$\pi^{-1}(\pi(x))$.

This ends of proof of this proposition.
\end{proof}

{\bf Example 2.} The flat torus $(\mathrm{T}^n,F)$ with $n>1$ does not
satisfy Assumption (I).

We can find arbitrarily long prime closed
geodesics on a flat torus. But the prime closed geodesics in the
same orbit must have the same length.

{\bf Example 3.} Any $\mathrm{SO}(n_1+1)\times
\mathrm{SO}(n_2+1)$-invariant Finsler metric
$F$ on $M=\mathrm{S}^{n_1}\times \mathrm{S}^{n_2}=\mathrm{SO}(n_1+1)\times \mathrm{SO}(n_2+1)/\mathrm{SO}(n_1)\times \mathrm{SO}(n_2)$ does
not satisfy Assumption (I).

The $\mathrm{SO}(n_1+1)\times \mathrm{SO}(n_2+1)$-invariance of $F$ may imply more isometries. If $n_i>1$, the term $\mathrm{SO}(n_i+1)$ can be changed to
$\mathrm{O}(n_i+1)$.

For each $i=1$ and $2$, we take $G'_i=\{e\}$ when $n_i=1$, $G'_i=\mathrm{SO}(n_i-1)$ when $n_i>3$, and $G'_i=\{g_i,e\}\subset \mathrm{O}(n_i+1)$
where $g$ is a reflection when $n_i=3$.
Then the fixed point set
$\mathrm{Fix}(G'_1\times G'_2, M)$ is a flat 2-dimensional torus
imbedded in $M$ as a totally geodesic submanifold.
Our claim is then implied by Example 2.

%{\bf Example 4.} When $M=\mathrm{S}^1\times \mathrm{S}^2=\mathrm{SO}(2)\times (\mathrm{SO}(3)/ \mathrm{SO}(2))$, any $\mathrm{SO}(2)\times \mathrm{SO}(3)$-invariant Finsler metric $F$
%is preserved by the action of $\mathrm{SO}(2)\times \mathrm{O}(3)$. So we can use
%the similar technique as for Example 2 to find a flat torus which
%is imbedded in $(M,F)$ as a totally geodesic submanifold. To summarize, the homogeneous Finsler space $(\mathrm{S}^1\times \mathrm{S}^2,F)=(\mathrm{SO}(2)\times \mathrm{SO}(3)/e\times \mathrm{SO}(2),F)$ does not satisfy
%Assumption (I).

{\bf Example 4.} When $F$ is a left invariant Finsler metric on $G=\mathrm{SO}(2)\times \mathrm{SU}(2)$, $(G,F)$ does not satisfy Assumption (I).

This is a special case of the following proposition.

\begin{proposition} \label{prop-rank-bigger-than-1}
Any left invariant Finsler metric $F$
on a compact connected Lie group $G$ with $\mathrm{rk}G>1$ does not
satisfy Assumption (I). Further more, it has arbitrarily long
prime closed geodesics.
\end{proposition}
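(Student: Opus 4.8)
The plan is to produce, directly, prime closed geodesics on $(G,F)$ of arbitrarily large length. Since the $G$-action on loops in (\ref{hat-G-action-on-loops}) and the $\mathrm{S}^1$-reparametrization both preserve the length functional, all prime closed geodesics lying in a single $\hat{G}$-orbit share a common length; hence exhibiting prime closed geodesics of unbounded length proves at once that $(G,F)$ has arbitrarily long prime closed geodesics and that it cannot satisfy Assumption (I).

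Concretely, I would work with right invariant vector fields. Because $F$ is left invariant, every left translation $L_g$ is an isometry, so $G\subseteq I_0(G,F)$ and every right invariant vector field on $G$ is a Killing vector field. For $u\in\mathfrak{g}=\mathrm{Lie}(G)$, write $X^u$ for the right invariant field with $X^u(e)=u$; it is nowhere zero (each $(dR_g)_e$ is an isomorphism), and its flow is left translation by $\exp(tu)$. If $u$ generates an $\mathrm{S}^1$-subgroup of $G$, then $X^u$ generates an $\mathrm{S}^1$-subgroup of $I_0(G,F)$, so Lemma \ref{preparation-lemma-Killing-geodesic} applies at a maximum point $x$ of the everywhere positive function $f_u(\cdot)=F(X^u(\cdot))$, producing a closed geodesic $c_u(t)=\exp(tu)\,x$, $t\in\mathbb{R}/\tau\mathbb{Z}$, where $\tau$ is the period of $\exp(tu)$.

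Two observations then finish the argument. First, $c_u$ is prime: a left translation by a non-identity element of $G$ has no fixed point, so the stabilizer of $x$ in $\{\exp(tu)\}$ is trivial and $c_u$ covers an embedded circle exactly once. Second, I estimate $L(c_u)$. Writing $F_e$ for the Minkowski norm $F|_{T_eG}$ and using left invariance, $F(X^u(g))=F_e(\mathrm{Ad}(g^{-1})u)$; moreover $f_u$ is constant along the flow lines of the Killing field $X^u$, hence along $c_u$, with value $\max_G f_u$. Since $\dot c_u(t)=X^u(c_u(t))$,
$$L(c_u)=\int_0^{\tau}F(X^u(c_u(t)))\,dt=\tau\cdot\max_{g\in G}F_e(\mathrm{Ad}(g^{-1})u)\ \geq\ \tau\,F_e(u).$$
Choosing $u$ in the integral lattice $\Lambda=\ker(\exp|_{\mathfrak{t}})$ of a maximal torus $T$ with $\mathfrak{t}=\mathrm{Lie}(T)$, and primitive there, forces $\tau=1$, so $L(c_u)\geq F_e(u)\geq c\,|u|$ for a fixed constant $c>0$ comparing $F_e$ with a background Euclidean norm $|\cdot|$ on $\mathfrak{g}$.

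Finally, since $\mathrm{rk}G=\dim\mathfrak{t}\geq 2$, the lattice $\Lambda$ has rank at least $2$, so it contains primitive vectors $u_n$ with $|u_n|\to\infty$ (for instance $v_1+n v_2$ for a $\mathbb{Z}$-basis $v_1,v_2,\dots$ of $\Lambda$). The corresponding prime closed geodesics satisfy $L(c_{u_n})\to\infty$, which is the ``arbitrarily long'' assertion; and since prime closed geodesics in one $\hat{G}$-orbit all have equal length, they cannot all lie in a single orbit, so Assumption (I) fails. The only points needing care are the two observations of the previous paragraph — the left invariant trivialization computing $L(c_u)$ and the fixed-point-freeness giving primality of $c_u$ — both short; the lattice input is elementary. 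I note that this argument is self-contained and in particular does not go through Theorem \ref{thm-rank-inequality}; one could instead try to deduce the failure of Assumption (I) from that theorem once one knows the union of all closed geodesics is all of $G$, but that detour is not obviously easier than the estimate above.
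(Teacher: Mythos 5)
Your proposal is correct and follows essentially the same route as the paper: right invariant Killing vector fields, Lemma \ref{preparation-lemma-Killing-geodesic} at a maximum of $F(X^u(\cdot))$, directions in the maximal torus made possible by $\mathrm{rk}\,G>1$, a length lower bound forcing arbitrarily long prime closed geodesics, and the constancy of length within a $\hat{G}$-orbit to rule out Assumption (I). The only difference is bookkeeping: you take primitive lattice vectors of period $1$ with growing norm and use $F(X^u(g))=F_e(\mathrm{Ad}(g^{-1})u)$, whereas the paper takes unit vectors with diverging periods $T_n$ and a uniform speed bound $c$, which is the same estimate after rescaling.
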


\begin{proof}
We fix a bi-invariant inner product
$|\cdot|_\mathrm{bi}^2=\langle\cdot,\cdot\rangle_\mathrm{bi}$
on $\mathfrak{g}=\mathrm{Lie}(G)$, and
a Cartan subalgebra $\mathfrak{t}$ in
$\mathfrak{g}$.
The Killing vector fields on $(G,F)$ are right invariant vector
fields.
There exists a positive constant $c>0$, such that
for any nonzero
vector $u\in\mathfrak{g}$, $F(X^u(g))>c$ for all $g\in G$.
The flow
generated by $X^u$ has the same period (which may be infinity) everywhere on $G$.

We can find a sequence of nonzero vectors $u_n\in \mathfrak{t}$ satisfying the following:
\begin{description}
\item{\rm (1)} Each $u_n$ generates an $\mathrm{S}^1$-subgroup.
\item{\rm (2)} The period $T_{n}$ for the flow $\rho_{n,t}$ generated by $X^{u_n}$ diverges to infinity.
\end{description}
Then the integration curve of $X^{u_n}$ at the maximum point
of $f(\cdot)=F(X^{u_n}(\cdot))$ provides a prime closed geodesic
$\gamma_n$. The length $l(\gamma_n)$
of $\gamma_n$ is at least $c\cdot T_{n}$ which diverges to infinity.

This ends the proof of the proposition.
\end{proof}

{\bf Example 5.} When $F$ is an $\mathrm{SO}(3)\times \mathrm{SU}(2)$-invariant
Finsler metric on $M=S^2\times S^3=\mathrm{SO}(3)\times \mathrm{SU}(2)/\mathrm{SO}(2)\times\{e\}$, it does not satisfy Assumption (I).

The metric $F$ is in fact
$\mathrm{O}(3)\times \mathrm{SU}(2)$-invariant. The fixed point set $M'=\mathrm{Fix}(g,M)$ of
a reflection $g\in \mathrm{O}(3)$ in $M$ can be identified as the Lie group
$G=\mathrm{SO}(2)\times \mathrm{SU}(2)$ such that $F|_{M'}$ is left invariant. By the discussion for Example 4, $(M,F)$ does not satisfy Assumption (I).

Based on Example 2-5, we can find more examples by the following
two propositions.

\begin{proposition} \label{prop-finite-cover}
If a compact connected homogeneous Finsler space $(M,F)$ is finitely covered by and locally isometric to one of Example 2-5, then $(M,F)$ does not
satisfy Assumption (I). In particular, it has arbitrarily long
prime closed geodesics.
\end{proposition}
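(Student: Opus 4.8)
The plan is to push the unbounded-length phenomenon of Examples 2--5 down through the covering map, and then contradict the length restriction forced by Assumption (I). Write the hypothesis as a finite covering $\pi:(\hat M,\hat F)\to(M,F)$ which is a local isometry, say with $k$ sheets, where $(\hat M,\hat F)$ is one of the spaces of Example 2--5. The first thing I would record is that each of Examples 2--5 in fact carries \emph{prime} closed geodesics of arbitrarily large length: this is explicit for the flat torus (Example 2) and for the left invariant metrics of Proposition \ref{prop-rank-bigger-than-1} (Example 4), and for Example 3 and Example 5 it follows by restricting to the totally geodesic flat $2$-torus constructed there, using that a geodesic which is prime as a curve inside a totally geodesic submanifold remains prime in the ambient space, since any iterate decomposition of it has the same image and hence already takes place inside the submanifold.

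Next I would transfer a long prime closed geodesic downstairs. Fix $L>0$, choose a prime closed geodesic $\hat\gamma:\mathbb{R}/\mathbb{Z}\to\hat M$ with $l(\hat\gamma)>kL$, and set $\gamma=\pi\circ\hat\gamma$. Since $\pi$ is a local isometry it carries geodesics to geodesics, so $\gamma$ is a closed geodesic of $(M,F)$ with $l(\gamma)=l(\hat\gamma)$. Writing $\gamma=\gamma_0^{m}$ for a prime closed geodesic $\gamma_0$ of $(M,F)$ and an integer $m\geq 1$, with $\gamma_0^{m}$ the iterate $t\mapsto\gamma_0(mt)$, the key point is the bound $m\leq k$. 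To get it, lift $\gamma_0$ period by period starting at $\hat p_0=\hat\gamma(0)$, producing points $\hat p_0,\hat p_1,\ldots$ in the fiber $\pi^{-1}(\gamma_0(0))$; if $j$ is the least positive index with $\hat p_j=\hat p_0$, then $\hat p_0,\ldots,\hat p_{j-1}$ are pairwise distinct, so $j\leq k$, the lift of $\gamma_0^{j}$ at $\hat p_0$ is a closed geodesic $\hat\gamma_0$ of $\hat M$, and $\hat\gamma=\hat\gamma_0^{m/j}$ (with $j\mid m$ because $\hat p_m=\hat p_0$). Primeness of $\hat\gamma$ forces $m/j=1$, i.e. $m=j\leq k$, whence $l(\gamma_0)=l(\gamma)/m\geq l(\hat\gamma)/k>L$. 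As $L$ was arbitrary, $(M,F)$ has prime closed geodesics of arbitrarily large length.

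Finally I would close the argument: all curves in a single $\hat G$-orbit have equal length, because the $G$-factor of $\hat G$ acts by isometries and the $\mathrm{S}^1$-factor acts by reparametrization; hence if Assumption (I) held, all prime closed geodesics of $(M,F)$ would have the same length, contradicting the previous step. Therefore $(M,F)$ fails Assumption (I), and the construction above has exhibited the promised arbitrarily long prime closed geodesics. The only step needing genuine care is the inequality $m\leq k$, that is, the covering-space bookkeeping relating prime closed geodesics on $\hat M$ to those on $M$; everything else is either quoted from Examples 2--5 or immediate from $\pi$ being a local isometry.
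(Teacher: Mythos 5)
Your proposal is correct and takes essentially the same route as the paper: it produces prime closed geodesics of unbounded length on the cover, projects them down, bounds the length of the underlying prime geodesic below by $l(\hat\gamma)/k$, and contradicts the fact that all prime closed geodesics in a single $\hat G$-orbit have equal length, with your covering-space bookkeeping for $m\leq k$ simply making explicit what the paper asserts in one line. One minor correction to an aside: in Example 5 the arbitrarily long geodesics come from the totally geodesic fixed-point set $\mathrm{SO}(2)\times\mathrm{SU}(2)$ with a left invariant metric via Proposition \ref{prop-rank-bigger-than-1}, not from a flat $2$-torus, but since the paper's Examples 2--5 already record the unbounded-length property this does not affect the argument.
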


\begin{proof}
Assume the Finsler space $(M,F)$ has a finite cover $(\tilde{M},\tilde{F})$
which is one of Example 2-5, such that the covering map
$\pi:(\tilde{M},\tilde{F})\rightarrow(M,F)$ is locally isometric.

On $(\tilde{M},\tilde{F})$, we can find a sequence of prime closed
geodesics $\tilde{\gamma}_n$ such that their lengths
$l(\tilde{\gamma}_n)$ diverge to infinity. Then
$\pi(\tilde{\gamma}_n)$ is a closed geodesic on $(M,F)$, which
corresponds to a prime closed geodesic $\gamma_n$. If $\pi$ is
an $m$-fold covering, then the lengths $l(\gamma_n)\geq l(\tilde{\gamma_n})/m$ which also diverge to infinity. So $(M,F)$ does not satisfy Assumption (I), which proves the proposition.
\end{proof}

\begin{proposition} \label{prop-rank-bigger-than-one-more}
Assume $(M,F)=(G/H,F)$ is a compact connected homogeneous Finsler space such that $G$ is a compact connected Lie group and $\mathrm{rk}{G}>
\mathrm{rk}{H}+1$, then $(M,F)$ does not satisfy Assumption (I), and it has arbitrarily long
prime closed geodesics.
\end{proposition}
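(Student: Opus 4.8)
The plan is to deduce this from Proposition \ref{prop-rank-bigger-than-1} by exhibiting inside $(M,F)$ a totally geodesic submanifold of dimension at least $2$ that is finitely covered by, and locally isometric to, a compact connected Lie group of rank $>1$ carrying a left-invariant Finsler metric; such a submanifold automatically has prime closed geodesics of arbitrarily large length, and hence so does $(M,F)$.

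First I would use the fixed point set technique of Section 2. Choose a maximal torus $T_H$ of $H$ and put $\mathcal{L}=T_H\subset H$. Then $M':=\mathrm{Fix}_o(T_H,M)$ is, by Lemma \ref{preparation-lemma-totally-geodesic}, a connected totally geodesic submanifold through $o$, and it can be written as $M'=G'\cdot o=(G'/(G'\cap H),F')$ with $G'=C_G(T_H)^0$ and $F'=F|_{M'}$ a $G'$-invariant Finsler metric. The role of this particular $\mathcal{L}$ is rank bookkeeping: since $T_H$ lies in some maximal torus $T_G$ of $G$ we have $T_G\subset C_G(T_H)$, whence $\mathrm{rk}G'=\mathrm{rk}G$; and $(G'\cap H)^0=C_H(T_H)^0=T_H$, whence $\mathrm{rk}(G'\cap H)=\mathrm{rk}H$. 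Consequently $\mathrm{rk}G'>\mathrm{rk}(G'\cap H)+1$ and $\dim M'=\dim G'-\mathrm{rk}H\geq\mathrm{rk}G-\mathrm{rk}H\geq 2$, so $M'$ is a genuine positive-dimensional totally geodesic submanifold.

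Next I would pass to finite isotropy. Because $G'$ is the identity component of the centralizer of $T_H$, the torus $T_H$ is central in $G'$, and $T_H\subset G'\cap H$; hence for $t\in T_H$ and $g\in G'$ we get $t\cdot g(G'\cap H)=g(g^{-1}tg)(G'\cap H)=g(G'\cap H)$, i.e. $T_H$ acts trivially on $M'$. Therefore the $G'$-action on $(M',F')$ factors through the compact connected Lie group $\tilde G:=G'/T_H$, which acts transitively with finite isotropy subgroup $\tilde H:=(G'\cap H)/T_H$ (finite since $(G'\cap H)^0=T_H$), and $\mathrm{rk}\tilde G=\mathrm{rk}G'-\mathrm{rk}T_H=\mathrm{rk}G-\mathrm{rk}H\geq 2$. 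Since $\tilde H$ acts freely on $\tilde G$ by right translations, the orbit map $\pi\colon\tilde G\to\tilde G/\tilde H=M'$ is a finite covering intertwining the left translation action of $\tilde G$ with its action on $M'$; pulling back $F'$ along $\pi$ thus yields a left-invariant Finsler metric $\tilde F$ on $\tilde G$, so that $\pi\colon(\tilde G,\tilde F)\to(M',F')$ is a finite locally isometric covering.

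Now the conclusions chain together. Applying Proposition \ref{prop-rank-bigger-than-1} to $(\tilde G,\tilde F)$, a left-invariant metric on a compact connected Lie group of rank $>1$, produces prime closed geodesics of arbitrarily large length on $(\tilde G,\tilde F)$; running the argument in the proof of Proposition \ref{prop-finite-cover} through the finite locally isometric covering $\pi$ transports this to $(M',F')$. Finally, since $M'$ is totally geodesic of dimension $\geq 2$, each such closed geodesic is a closed geodesic of $(M,F)$ of the same length, and it remains prime in $M$: if it were an $m$-fold iterate ($m\geq 2$) of some closed geodesic $\sigma$ of $(M,F)$, then $\mathrm{image}(\sigma)$ would lie in the image of the geodesic, hence in $M'$, forcing $\sigma$ to be a geodesic of the totally geodesic submanifold $M'$ by uniqueness of geodesics, contradicting primeness in $M'$. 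Thus $(M,F)$ carries prime closed geodesics of arbitrarily large length, and since all prime closed geodesics in one $\hat{G}$-orbit have the same length, $(M,F)$ cannot satisfy Assumption (I). The step I expect to be delicate is precisely this reduction: picking the right fixed point set, checking the rank equalities $\mathrm{rk}G'=\mathrm{rk}G$ and $\mathrm{rk}(G'\cap H)=\mathrm{rk}H$, and verifying that $T_H$ is central in $G'$ and so may be quotiented out — this is what converts the hypothesis $\mathrm{rk}G>\mathrm{rk}H+1$ into a left-invariant metric on a rank $>1$ Lie group; a lesser but still necessary point is the preservation of primeness and length when moving between $M'$ and $M$.
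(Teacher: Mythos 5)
Your proposal is correct and follows essentially the same route as the paper: take the fixed point set $\mathrm{Fix}_o(T_H,M)$ of a maximal torus of $H$, recognize it as a totally geodesic submanifold finitely covered by (and locally isometric to) a compact connected Lie group of rank $\mathrm{rk}\,G-\mathrm{rk}\,H\geq 2$ with a left-invariant metric, and then invoke Propositions \ref{prop-rank-bigger-than-1} and \ref{prop-finite-cover} to produce arbitrarily long prime closed geodesics, contradicting Assumption (I). You merely supply the rank bookkeeping, the quotient by the central torus $T_H$, and the primeness/length transfer that the paper leaves implicit.
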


\begin{proof}
Let $T$ be a maximal torus in $H$ and denote $M'=\mathrm{Fix}_o(T,M)$.
Then $M'$ is
totally geodesic in $M$ and $(M',F|_{M'})$ is finitely covered by and locally isometric to a compact connected Lie group $G'$ with $\mathrm{rk}G'>1$ and a left invariant Finsler metric $F'$.
The argument in the proofs of Proposition \ref{prop-rank-bigger-than-1} and Proposition \ref{prop-finite-cover} shows there exist arbitrarily long prime
closed geodesics on $(G',F')$ as well as on $(M',F|_{M'})$ and $(M,F)$.

This ends the proof of the proposition.
\end{proof}
\section{Algebraic setup and $g_u^F$-orthogonality}

Assume $(M,F)=(G/H,F)$, where $G=I_0(M,F)$, is a compact connected homogeneous Finsler space satisfying Assumption (I), i.e. it
has only one orbit of prime closed geodesics.

We fix an $\mathrm{Ad}(G)$-invariant inner product $|\cdot|_\mathrm{bi}^2=\langle\cdot,\cdot\rangle_\mathrm{bi}$.
For simplicity, we call the orthogonality with respect to this inner product {\it bi-invariant}. There is a unique bi-invariant
reductive decomposition $\mathfrak{g}=\mathfrak{h}+\mathfrak{m}$.
We denote $\mathrm{pr}_\mathfrak{h}$ and $\mathrm{pr}_\mathfrak{m}$ the projection maps according
to this decomposition.

Any vector $u\in\mathfrak{m}$ represents a tangent vector in
$T_oM$. Meanwhile $u\in\mathfrak{g}$ also defines the
Killing vector field $X^u$ which satisfies $X^u(o)=u\in T_oM=\mathfrak{m}$.
Any Killing vector field $X$ of $(M,F)$ satisfying
$X(o)=u$ can be presented as $X^{u+u'}$ for some $u'\in\mathfrak{h}$. By Proposition 3.4 in \cite{YD-2014}, the integration curve of $X^{u+u'}$
at $o$ is a geodesic if and only if
$$\langle u,[u+u',\mathfrak{m}]_\mathfrak{m}\rangle^F_u=0.$$
In particular, when $u'=0$ we have

\begin{lemma} \label{lemma-6}
Let $(G/H,F)$ be a compact connected homogeneous Finsler space with a bi-invariant reductive
decomposition $\mathfrak{g}=\mathfrak{h}+\mathfrak{m}$.
Assume $u\in\mathfrak{m}$ is a nonzero vector
satisfying
\begin{equation}\label{equ-eta-0}
\langle u,[u,\mathfrak{m}]_\mathfrak{m}\rangle^F_u=0,
\end{equation}
then the integration curve of $X^u$ at $o$ is a geodesic. In particular, if $u$ generates an $\mathrm{S}^1$-subgroup in $G$, then $X$ generates a closed
geodesic at $o$.
\end{lemma}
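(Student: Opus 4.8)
The plan is to obtain the lemma as an immediate specialization of the homogeneous geodesic criterion quoted just before it (Proposition 3.4 of \cite{YD-2014}), taking $u'=0$. First I would make explicit what ``the integration curve of $X^u$ at $o$'' is: since $X^u$ is the Killing vector field on $M=G/H$ induced by $u\in\mathfrak{g}$, its flow is $gH\mapsto\exp(tu)gH$, so the integral curve through $o=eH$ is $c(t)=\exp(tu)\cdot o$, which has $\dot c(0)=X^u(o)$. As the reductive decomposition in play is the fixed one and $u\in\mathfrak{m}$, we have $X^u(o)=\mathrm{pr}_{\mathfrak{m}}(u)=u\neq 0$; in particular $c$ is nonconstant with constant positive speed, hence it is eligible to be a geodesic.

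Next I would apply the criterion. Writing the Killing field as $X^{u+u'}$ with the choice $u'=0\in\mathfrak{h}$, Proposition 3.4 of \cite{YD-2014} asserts that $c$ is a geodesic if and only if $\langle u,[u+u',\mathfrak{m}]_{\mathfrak{m}}\rangle^F_u=0$; for $u'=0$ this is exactly hypothesis (\ref{equ-eta-0}), which holds by assumption. Hence $c(t)=\exp(tu)\cdot o$ is a geodesic of $(M,F)$ through $o$ with initial velocity $u$, which is the first assertion. For the ``in particular'' clause, suppose moreover that $u$ generates an $\mathrm{S}^1$-subgroup of $G$, i.e. $t\mapsto\exp(tu)$ is periodic of some period $T>0$. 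Then $c(t)=\exp(tu)\cdot o$ is periodic (with period dividing $T$) and, $X^u$ being globally defined, $\dot c$ is periodic with the same period; together with the first part this makes $c$ a nonconstant periodic geodesic, that is, a closed geodesic through $o$ tangent to $u$ (not necessarily prime, which the statement does not claim).

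There is essentially no obstacle once Proposition 3.4 of \cite{YD-2014} is granted: the lemma is a one-line specialization together with the elementary bookkeeping above. The only step that would take real work from scratch is the geodesic criterion itself, which is proved from the first-variation formula for the length (or energy) functional, using the $G$-invariance of $F$ to transport every variation field back to $\mathfrak{m}=T_oM$; the point where the Finsler (rather than Riemannian) nature genuinely enters is that the relevant bilinear form is the fundamental tensor $g^F_u=\langle\cdot,\cdot\rangle^F_u$ evaluated \emph{in the direction $u$ of the curve}, which is why the criterion involves $\langle\cdot,\cdot\rangle^F_u$ rather than a fixed inner product.
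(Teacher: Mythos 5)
Your proposal is correct and matches the paper's treatment: the lemma is obtained there, exactly as you do, by specializing the geodesic criterion of Proposition 3.4 in \cite{YD-2014} to the Killing field $X^{u+u'}$ with $u'=0$, so that condition (\ref{equ-eta-0}) is precisely the criterion, and the closedness in the $\mathrm{S}^1$ case follows from periodicity of $t\mapsto\exp(tu)\cdot o$. Your added bookkeeping (identifying the integral curve as $\exp(tu)\cdot o$ and noting $X^u(o)=u\neq 0$) is consistent with, and slightly more explicit than, the paper's one-line deduction.
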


Define $\mathcal{C}$ to be the subset of all
$u\in\mathfrak{m}$ such that $|u|_\mathrm{bi}=1$ and
the geodesic of $(M,F)$ passing $o$ in the direction
of $u$ is closed. Assumption (I) implies $\mathcal{C}$
is an $\mathrm{Ad}(H)$-orbit.

Lemma \ref{lemma-6} is a key technique for us to determine
$\mathcal{C}$. Vectors in $\mathfrak{m}$ which generate $\mathrm{S}^1$-subgroups are not hard to be found, for example,
from $\mathfrak{t}\cap\mathfrak{m}$ or a root plane.
The equality (\ref{equ-eta-0}) implies the $g_u^F$-orthogonality
is the remaining issue to be considered, which will be discussed
for the cases $\dim M$ is even and odd separately.

{\bf Case 1.} Assume $\dim M$ is even.

By Theorem \ref{thm-rank-inequality}, we can find a Cartan subalgebra $\mathfrak{t}$ of $\mathfrak{g}$ which is contained in $\mathfrak{h}$. With respect to $\mathfrak{t}$, we have the
root plane decomposition for $\mathfrak{g}$,
\begin{eqnarray}\label{root-plane-decomp-g}
\mathfrak{g}=\mathfrak{t}+\sum_{\alpha\in\Delta}
\mathfrak{g}_{\pm\alpha},
\end{eqnarray}
where $\Delta\subset\mathfrak{t}^*$ is the root system of $\mathfrak{g}$, and
$\mathfrak{g}_{\pm\alpha}$ is the root plane.

For $\mathfrak{h}$, we have a similar root plane decomposition with respect to $\mathfrak{t}$. The root system $\Delta'$ of $\mathfrak{h}$ is a
subset of $\Delta$, and $\mathfrak{h}_{\pm\alpha}
=\mathfrak{g}_{\pm\alpha}$ when $\alpha\in\Delta'$.
Because the reductive
decomposition is bi-invariant, each root plane $\mathfrak{g}_{\pm\alpha}$ is contained in either $\mathfrak{h}$
or $\mathfrak{m}$.

For any nonzero vector $u\in\mathfrak{g}_{\pm\alpha}\subset\mathfrak{m}$,
we have the following $g_u^F$-orthogonality \cite{XDHH}.

\begin{lemma}\label{ortho-lemma-0}
Let $G/H$ be an even dimensional compact connected homogeneous Finsler space,
and keep all relevant notations and assumptions. Then for any nonzero $u\in\mathfrak{g}_{\pm\alpha}\subset\mathfrak{m}$, $\mathfrak{g}_{\pm\alpha}$ is $g_u^F$-orthogonal
to all other root planes in $\mathfrak{m}$.
\end{lemma}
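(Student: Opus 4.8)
The plan is to exploit the $\mathrm{Ad}(H)$-homogeneity of the set $\mathcal{C}$ together with an averaging/symmetry argument for the Hessian inner product $g_u^F$. Fix a nonzero $u\in\mathfrak{g}_{\pm\alpha}\subset\mathfrak{m}$, normalized so $|u|_\mathrm{bi}=1$. The first step is to identify a subgroup of the isotropy $H$ (or of its normalizer inside $G$) that fixes the line $\mathbb{R}u$, or at least preserves $\mathfrak{g}_{\pm\alpha}$ and acts on the remaining root planes in a way rich enough to force the desired orthogonality. The natural candidates are: the torus $\exp\mathfrak{t}\subset H$ (available since $\dim M$ is even, by Theorem \ref{thm-rank-inequality}), which acts on $\mathfrak{g}_{\pm\beta}$ by rotation with angular speed $\beta$; and the stabilizer of the geodesic $c(t)=\exp(tu)\cdot o$, i.e.\ the group $H_1$ from the proof of Theorem \ref{thm-rank-inequality}, which translates $c$ and hence fixes $\dot c(0)=u$ up to sign.

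Second, I would use the fact that $g_u^F$ is invariant under any isometry of the Minkowski norm $F|_\mathfrak{m}$ fixing $u$: if $\phi\in\mathrm{Ad}(H)$ fixes $u$, then $g_u^F(\phi v,\phi w)=g_u^F(v,w)$. Applying this with $\phi$ ranging over the part of the maximal torus $T=\exp\mathfrak{t}$ that fixes $u$ (a codimension-one subtorus $T_0\subset T$, since $\mathrm{Ad}(\exp X)u = (\cos\alpha(X))u + (\sin\alpha(X))u'$ with $u'$ the companion vector in $\mathfrak{g}_{\pm\alpha}$, so requiring $\alpha(X)\in 2\pi\mathbb{Z}$ pins down a subtorus fixing $u$), one gets that $g_u^F$ restricted to $\mathfrak{m}$ is invariant under $\mathrm{Ad}(T_0)$. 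Since $T_0$ acts on a root plane $\mathfrak{g}_{\pm\beta}\subset\mathfrak{m}$ (with $\beta\neq\pm\alpha$) by a nontrivial rotation whenever $\beta|_{\mathfrak{t}_0}\neq 0$, the $g_u^F$-orthogonal complement constraints force $\mathfrak{g}_{\pm\alpha}\perp_{g_u^F}\mathfrak{g}_{\pm\beta}$ by the standard argument that a bilinear form invariant under rotation in one factor must pair that factor trivially with any fixed vector. One must also handle the exceptional roots $\beta$ with $\beta|_{\mathfrak{t}_0}=0$; these are the roots proportional to $\alpha$, and for those one brings in the second symmetry — the reflection/translation coming from $H_1$ acting on the geodesic through $u$, which fixes $\pm u$ and reverses or preserves the relevant planes, again forcing orthogonality, combined with the defining geodesic equation (\ref{equ-eta-0}) for $u$ which already encodes $\langle u,[u,\mathfrak{m}]_\mathfrak{m}\rangle^F_u=0$.

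The main obstacle I anticipate is precisely this last point: ruling out a $g_u^F$-pairing between $\mathfrak{g}_{\pm\alpha}$ and a root plane $\mathfrak{g}_{\pm\beta}$ with $\beta$ a rational multiple of $\alpha$ (most delicately $\beta=2\alpha$ in the $BC$-type situation, or $\beta=-\alpha$ if $\mathfrak{g}_{\alpha}$ and $\mathfrak{g}_{-\alpha}$ are treated as separate planes), where the subtorus $T_0$ alone gives no information. Resolving it will require genuinely using Assumption (I) — namely that \emph{every} unit vector in $\mathcal{C}$ arises from a single $\mathrm{Ad}(H)$-orbit and generates a closed geodesic of the same length — to pin down enough of the structure of $F|_\mathfrak{m}$ near $u$, possibly together with Lemma \ref{preparation-lemma-Killing-geodesic} applied to nearby $\mathrm{S}^1$-generating vectors in $\mathfrak{t}\cap(\mathfrak{m}\oplus\mathbb{R}u)$ to produce a family of closed geodesics whose initial velocities sweep out a $2$-plane, and then reading off orthogonality from the resulting constraint on the Hessian. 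I would lean on the computations in \cite{XDHH} here, since the statement is attributed to that work, and adapt their root-plane orthogonality lemma, checking that the only hypothesis actually used is the $\mathrm{Ad}(T)$-invariance of $F|_\mathfrak{m}$ plus $u\in\mathcal{C}$, both of which we have.
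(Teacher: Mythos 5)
Your first two paragraphs are exactly the paper's argument: since $\dim M$ is even, $\mathfrak{t}\subset\mathfrak{h}$, and the codimension-one subtorus of $T=\exp\mathfrak{t}$ whose adjoint action fixes $u$ (its identity component is $T'=\exp\ker\alpha$, the subtorus the paper uses) acts on $(T_oM,F|_{\mathfrak{m}})$ by linear isometries fixing $u$, hence preserves $g_u^F$; decomposing $\mathfrak{m}$ into $T'$-isotypic summands and applying Schur's lemma (your ``rotation'' argument) makes $\mathfrak{g}_{\pm\alpha}$, the trivial summand, $g_u^F$-orthogonal to every other root plane in $\mathfrak{m}$. So the core of your proposal coincides with the proof sketched in the paper and in Lemma 5.3 of \cite{XDHH}.

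However, the ``main obstacle'' you anticipate in the last paragraph is spurious. The root system of a compact Lie algebra is reduced, so the only roots proportional to $\alpha$ are $\pm\alpha$; moreover $\mathfrak{g}_{\pm\alpha}$ here denotes the single real root plane attached to the pair $\{\alpha,-\alpha\}$, so there is no separate plane for $-\alpha$ and no $BC$-type root $2\alpha$ to worry about. Consequently no root plane in $\mathfrak{m}$ other than $\mathfrak{g}_{\pm\alpha}$ is fixed by $T'$, the exceptional case is vacuous, and the additional machinery you invoke to handle it --- Assumption (I), the single $\mathrm{Ad}(H)$-orbit structure of $\mathcal{C}$, the group $H_1$, and families of closed geodesics with sweeping initial velocities --- is both unnecessary and, as sketched, too vague to constitute an argument. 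Note also that the lemma is a purely representation-theoretic statement about even-dimensional equal-rank homogeneous Finsler spaces: it uses only $\mathfrak{t}\subset\mathfrak{h}$ and the $\mathrm{Ad}(H)$-invariance of $F|_{\mathfrak{m}}$, not Assumption (I), so a proof that genuinely needed Assumption (I) would in any case be proving a weaker statement than the one asserted.
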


Sketchily, Lemma \ref{ortho-lemma-0} can be proved as following.
Let $T'=\exp\ker\alpha$ be the sub-torus in $H$ generated by
the kernel of $\alpha$ in $\mathfrak{t}$. We have a $\mathrm{Ad}(T
)$-invariant decomposition
of $\mathfrak{m}$ such that each summand is a sum of root planes and corresponds to a different irreducible representation of $T'$. Because the $T'$-action preserves $u\in T_oM$, the $\mathrm{Ad}(T')$-action
preserves the inner product $g_u^F$ on $\mathfrak{m}$. Schur's Lemma
implies different summand in $\mathfrak{m}$ are $g_u^F$-orthogonal
to each other. In particular, the summand in $\mathfrak{m}$ corresponding to the trivial representation is $\mathfrak{g}_{\pm\alpha}$, which is then $g_u^F$-orthogonal to all other root planes in $\mathfrak{m}$. See Lemma 5.3 in \cite{XDHH} and its proof for more details.

The same thought can be applied to the case that $\dim M$ is odd,
concerning different decompositions and different group actions, and
providing more $g_u^F$-orthogonality.

{\bf Case 2.} Assume $\dim M$ is odd.

By Theorem \ref{thm-rank-inequality}, we can find a Cartan subalgebra $\mathfrak{t}$ of $\mathfrak{g}$ such that $\mathfrak{t}\cap\mathfrak{h}$ is a Cartan subalgebra of $\mathfrak{h}$ and $\dim\mathfrak{t}\cap\mathfrak{m}=1$. For
simplicity, we call this $\mathfrak{t}$ a {\it fundamental Cartan
subalgebra for} $G/H$.

With respect to $\mathfrak{t}$ and $\mathfrak{t}\cap\mathfrak{h}$,
we have the root plane decompositions,
\begin{eqnarray*}
\mathfrak{g}=\mathfrak{t}+\sum_{\alpha\in\Delta}
\mathfrak{g}_{\pm\alpha},\quad\mbox{and}\quad
\mathfrak{h}=\mathfrak{t}\cap\mathfrak{h}+\sum_{\alpha'\in\Delta'}
\mathfrak{h}_{\pm\alpha'},
\end{eqnarray*}
where $\Delta$ and $\Delta'$ are root systems for $\mathfrak{g}$
and $\mathfrak{h}$ respective.

For the simplicity of notations, we apply the convention in
\cite{XD-towards-odd}\cite{XW-KVCL-Riemannian-normal}\cite{XZ-2017}.
Using the inner product $\langle\cdot,\cdot\rangle_{\mathrm{bi}}$,
we identify roots of $\mathfrak{g}$ as vectors in $\mathfrak{t}$,
and using the restriction of $\langle\cdot,\cdot\rangle_\mathrm{bi}$
to $\mathfrak{h}$, we identify roots of $\mathfrak{h}$ as vectors in $\mathfrak{t}\cap\mathfrak{h}$. Then the root systems $\Delta$
and $\Delta'$ are subsets in $\mathfrak{t}$ and $\mathfrak{t}\cap\mathfrak{h}$ rather than $\mathfrak{t}^*$ and
$(\mathfrak{t}\cap\mathfrak{h})^*$ respectively.

Assume $T_H$ is the maximal torus in $H$, generated by $\mathfrak{t}\cap\mathfrak{h}$. With respect to different
irreducible $T_H$-representations, $\mathfrak{g}$ can be
decomposed as
\begin{equation}
\mathfrak{g}=\hat{\mathfrak{g}}_0+
\sum_{0\neq\alpha'\in\mathfrak{t}\cap\mathfrak{h}}
\hat{\mathfrak{g}}_{\pm\alpha'}=\mathfrak{c}_\mathfrak{g}
(\mathfrak{t}\cap\mathfrak{h})+
\sum_{0\neq\alpha'\in\mathfrak{t}\cap\mathfrak{g}}
\left(\sum_{\mathrm{pr}_\mathfrak{h}(\alpha)=\alpha'}
\mathfrak{g}_{\pm\alpha}\right).
\end{equation}

For each $\alpha'\in\mathfrak{t}\cap\mathfrak{h}$,
$\hat{\mathfrak{g}}_{\pm\alpha'}=
\hat{\mathfrak{g}}_{\pm\alpha'}\cap\mathfrak{h}+
\hat{\mathfrak{g}}_{\pm\alpha'}\cap\mathfrak{m}$.
In particular
$\hat{\mathfrak{g}}_{0}\cap\mathfrak{h}=
\mathfrak{t}\cap\mathfrak{h}$,
$\hat{\mathfrak{g}}_{\pm\alpha'}\cap\mathfrak{h}=0$
 if and only if $\alpha'\notin\Delta'$, and
$\hat{\mathfrak{g}}_{\pm\alpha'}\cap\mathfrak{h}=\mathfrak{h}_{\pm\alpha'}\neq0$
 if and only if $\alpha'\in\Delta'$.

Moreover, we have the $\mathrm{Ad}(T_H)$-invariant
decomposition
\begin{equation}\label{T-H-decomp-m}
\mathfrak{m}=\sum_{\alpha'\in\mathfrak{t}\cap\mathfrak{h}}
\hat{\mathfrak{m}}_{\pm\alpha'},
\end{equation}
where $\hat{\mathfrak{m}}_{\pm\alpha'}=\hat{\mathfrak{g}}_{\pm\alpha'}
\cap\mathfrak{m}$, and in particular,
$\hat{\mathfrak{m}}_{0}$ is a subalgebra, the centralizer
$\mathfrak{c}_\mathfrak{m}(\mathfrak{h})$ of $\mathfrak{h}$ in
$\mathfrak{m}$. Either $\hat{\mathfrak{m}}_0=\mathfrak{t}+\mathfrak{g}_{\pm\alpha}$
when there exists a root $\alpha$ in $\mathfrak{t}\cap\mathfrak{m}$, or $\hat{\mathfrak{m}}_0=\mathfrak{t}\cap\mathfrak{m}$ otherwise.

The $T_H$-action preserves $u\in T_oM$ for any nonzero
$u\in\hat{\mathfrak{m}}_0$. So applying the similar method as
for Lemma \ref{ortho-lemma-0}, we get

\begin{lemma} \label{ortho-lemma-1}
Assume $(G/H,F)$ is an odd dimensional compact connected homogeneous Finsler space
and keep all relevant notations and assumptions. Then
for any nonzero $u\in\hat{\mathfrak{m}}_0$,
the decomposition (\ref{T-H-decomp-m}) is $g_u^F$-orthogonal.
\end{lemma}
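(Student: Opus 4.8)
The plan is to mimic the proof of Lemma \ref{ortho-lemma-0} verbatim, replacing the subtorus $T'=\exp\ker\alpha$ used there by the full maximal torus $T_H\subset H$. First I would observe that for a nonzero $u\in\hat{\mathfrak{m}}_0=\mathfrak{c}_\mathfrak{m}(\mathfrak{h})$, the vector $u\in T_oM=\mathfrak{m}$ is fixed by the isotropy action of $T_H$: indeed $\hat{\mathfrak{m}}_0$ is precisely the fixed-point set of $\mathrm{Ad}(T_H)$ in $\mathfrak{m}$, since $\hat{\mathfrak{m}}_0=\hat{\mathfrak{g}}_0\cap\mathfrak{m}$ corresponds to the trivial $T_H$-representation in the decomposition $\mathfrak{g}=\hat{\mathfrak{g}}_0+\sum_{\alpha'}\hat{\mathfrak{g}}_{\pm\alpha'}$. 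Consequently, since isometries in the isotropy group act on $T_oM$ preserving the fundamental tensor, the linear $\mathrm{Ad}(T_H)$-action on $\mathfrak{m}$ preserves the inner product $g_u^F$ (this is the standard fact that $h_*$ is a linear $g_u^F$-isometry onto $g_{h_*u}^F$, which for $h\in T_H$ fixing $u$ becomes a genuine $g_u^F$-isometry).

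Next I would invoke Schur's lemma exactly as before: the decomposition (\ref{T-H-decomp-m}), namely $\mathfrak{m}=\sum_{\alpha'\in\mathfrak{t}\cap\mathfrak{h}}\hat{\mathfrak{m}}_{\pm\alpha'}$, groups the root planes of $\mathfrak{m}$ according to distinct irreducible (real) $T_H$-representations — two summands $\hat{\mathfrak{m}}_{\pm\alpha'}$ and $\hat{\mathfrak{m}}_{\pm\beta'}$ with $\alpha'\neq\pm\beta'$ carry inequivalent irreducible $T_H$-modules. Since $g_u^F$ is an $\mathrm{Ad}(T_H)$-invariant inner product on $\mathfrak{m}$, any two summands carrying inequivalent irreducibles must be $g_u^F$-orthogonal: the $g_u^F$-orthogonal projection of one onto the other is a $T_H$-equivariant map between inequivalent irreducibles, hence zero. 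This yields precisely the asserted $g_u^F$-orthogonality of the decomposition (\ref{T-H-decomp-m}).

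I expect the only genuine subtlety — rather than a real obstacle — to be the bookkeeping over the reals: the summands $\hat{\mathfrak{m}}_{\pm\alpha'}$ for $\alpha'\neq 0$ are $2$-dimensional real $T_H$-modules (rotations at a nonzero weight), while $\hat{\mathfrak{m}}_0$ may itself decompose further; but for the statement as given we only need orthogonality \emph{between} distinct summands $\hat{\mathfrak{m}}_{\pm\alpha'}$, and distinct nonzero weights $\pm\alpha'$ indeed give inequivalent real irreducibles, while $\hat{\mathfrak{m}}_0$ (trivial weight) is inequivalent to all of them. So the real form of Schur's lemma applies without trouble. Since this is the same argument already spelled out in detail for Lemma \ref{ortho-lemma-0} and in Lemma 5.3 of \cite{XDHH}, I would present it briefly, referring back to that proof and to \cite{XDHH} for the routine representation-theoretic details, and simply emphasize the one changed ingredient: $T_H$ fixes every vector of $\hat{\mathfrak{m}}_0$, which is what licenses the $g_u^F$-invariance of $\mathrm{Ad}(T_H)$.
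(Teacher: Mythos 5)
Your proposal is correct and is essentially the paper's own argument: the paper also notes that the $T_H$-action fixes any $u\in\hat{\mathfrak{m}}_0$ and then invokes the same Schur-type argument used for Lemma \ref{ortho-lemma-0} (Lemma 5.3 of \cite{XDHH}), since the summands of (\ref{T-H-decomp-m}) are isotypic for pairwise inequivalent $T_H$-representations. The only nitpick is that $\hat{\mathfrak{m}}_{\pm\alpha'}$ need not be $2$-dimensional (it is a sum of root planes with the same projection $\alpha'$), but as you observe this does not affect the orthogonality between distinct summands.
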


When $\dim\hat{\mathfrak{m}}_0=3$, we need more $g_u^F$-orthogonality which requires suitable choices of $u$
as following. There exist two different vectors $u_1$ and $u_2$ in $\hat{\mathfrak{m}}_0$ such that
$|u_1|_\mathrm{bi}=|u_2|_\mathrm{bi}=1$,
$F(u_1)=\min\{F(u)| u\in\hat{\mathfrak{m}}_0, |u|_\mathrm{bi}=1\}$, and
$F(u_2)=\max\{F(u)| u\in\hat{\mathfrak{m}}_0, |u|_\mathrm{bi}=1\}$.

Then we have
\begin{lemma}\label{ortho-lemma-1-5}
Assume $(G/H,F)$ is an odd dimensional compact connected homogeneous Finsler space with $\dim\hat{\mathfrak{m}}_0=3$, and keep all relevant notations
and assumptions, then for $u_i$ chosen above, we have
$\langle u_i,[u_i,\hat{\mathfrak{m}}_0]\rangle^F_{u_i}=0$.
\end{lemma}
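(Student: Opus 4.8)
The plan is to reduce the statement to an elementary first‑order extremum condition for a Minkowski norm on the $3$-dimensional compact Lie algebra $\mathfrak{su}(2)$; in particular neither Lemma~\ref{lemma-6} nor the totally geodesic machinery is really needed. First I would pin down the algebraic type of $\hat{\mathfrak{m}}_0$. Since $\dim\hat{\mathfrak{m}}_0=3$ while $\dim(\mathfrak{t}\cap\mathfrak{m})=1$, the description of $\hat{\mathfrak{m}}_0$ recalled above forces $\hat{\mathfrak{m}}_0=(\mathfrak{t}\cap\mathfrak{m})+\mathfrak{g}_{\pm\alpha}$ for a root $\alpha$ lying in $\mathfrak{t}\cap\mathfrak{m}$, so $\mathfrak{t}\cap\mathfrak{m}=\mathbb{R}h_\alpha$ with $\mathbb{R}h_\alpha=[\mathfrak{g}_\alpha,\mathfrak{g}_{-\alpha}]$. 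Thus $\hat{\mathfrak{m}}_0$ is a $3$-dimensional subalgebra of the compact algebra $\mathfrak{g}$ which is not abelian (it contains the root plane $\mathfrak{g}_{\pm\alpha}$ together with the coroot direction), hence $\hat{\mathfrak{m}}_0\cong\mathfrak{su}(2)$.

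Next I would describe $[u,\hat{\mathfrak{m}}_0]$ for an arbitrary nonzero $u\in\hat{\mathfrak{m}}_0$. Every nonzero element of $\mathfrak{su}(2)$ is regular, so $\ker(\mathrm{ad}_u|_{\hat{\mathfrak{m}}_0})=\mathbb{R}u$; on the other hand bi‑invariance of $\langle\cdot,\cdot\rangle_\mathrm{bi}$ gives $\langle u,[u,v]\rangle_\mathrm{bi}=0$ for all $v$, so $[u,\hat{\mathfrak{m}}_0]=\mathrm{ad}_u(\hat{\mathfrak{m}}_0)$ is exactly the $\langle\cdot,\cdot\rangle_\mathrm{bi}$-orthogonal complement of $\mathbb{R}u$ inside $\hat{\mathfrak{m}}_0$, a $2$-plane. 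Consequently the bi-invariant unit sphere $S=\{v\in\hat{\mathfrak{m}}_0:\ |v|_\mathrm{bi}=1\}$ is a round $2$-sphere with $T_uS=[u,\hat{\mathfrak{m}}_0]$ for every $u\in S$.

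Finally I would combine this with the defining property of $u_1$ and $u_2$. Since $\hat{\mathfrak{m}}_0$ is a linear subspace of $\mathfrak{m}=T_oM$, the function $F|_{\hat{\mathfrak{m}}_0}$ is a Minkowski norm, smooth on $\hat{\mathfrak{m}}_0\setminus\{0\}$, and by construction $u_1$ is a minimum and $u_2$ a maximum of its restriction to the compact sphere $S$; hence each $u_i$ is a critical point of $F|_S$, i.e. the differential $dF_{u_i}$ annihilates $T_{u_i}S=[u_i,\hat{\mathfrak{m}}_0]$. On the other hand, the $1$-homogeneity of $F$ and Euler's theorem give the standard identity $g_u^F(u,w)=F(u)\,dF_u(w)$ for all $u\neq 0$ and all $w$ (here $g_u^F$ restricted to $\hat{\mathfrak{m}}_0$ is the fundamental form of $F|_{\hat{\mathfrak{m}}_0}$). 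Applying this with $u=u_i$ and $w\in[u_i,\hat{\mathfrak{m}}_0]$ yields $\langle u_i,[u_i,\hat{\mathfrak{m}}_0]\rangle^F_{u_i}=g^F_{u_i}(u_i,w)=F(u_i)\,dF_{u_i}(w)=0$, which is the assertion.

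The only points that need a little care are the identification $\hat{\mathfrak{m}}_0\cong\mathfrak{su}(2)$ and the observation that it is the choice of the \emph{bi-invariant} sphere (rather than the $F$-indicatrix) that makes $T_{u_i}S$ coincide with $[u_i,\hat{\mathfrak{m}}_0]$; once these are in place the proof is just the vanishing of the first variation of $F$ along $S$ combined with the homogeneity identity, so I do not anticipate any serious obstacle. An alternative, slightly longer route goes through the totally geodesic submanifold $M'=\mathrm{Fix}_o(T_H,M)$, whose tangent space at $o$ is $\hat{\mathfrak{m}}_0$: one checks that $o$ is a global extremum of $F(X^{u_i}(\cdot))$ on $M'$ because $\mathrm{Ad}(G')$ acts on $\hat{\mathfrak{m}}_0$ as the full rotation group of $S$, then invokes Lemma~\ref{preparation-lemma-Killing-geodesic} to get that the integration curve of $X^{u_i}$ at $o$ is a geodesic and concludes by the geodesic criterion behind Lemma~\ref{lemma-6}; this reproves the same equality at the cost of realizing $M'$ as a homogeneous space.
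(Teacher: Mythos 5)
Your argument is correct and is essentially the intended one: the paper omits the proof, deferring to Lemma 3.6--3.8 of \cite{XD-towards-odd}, where the same first-variation argument is used — since $\hat{\mathfrak{m}}_0\cong\mathfrak{su}(2)$, the bi-invariant unit sphere is the $\mathrm{Ad}$-orbit of $u_i$ with tangent space $[u_i,\hat{\mathfrak{m}}_0]$, the extremality of $F$ there kills $dF_{u_i}$ on that space, and the homogeneity identity $\langle u_i,w\rangle^F_{u_i}=F(u_i)\,dF_{u_i}(w)$ converts this into the claimed $g^F_{u_i}$-orthogonality. No gaps.
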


Assume $\alpha$ is a root of $\mathfrak{g}$ such that
$\mathrm{pr}_\mathfrak{h}(\alpha)=\alpha'\neq 0$. Denote
$\mathfrak{t}'$ the bi-invariant orthogonal complement of $\alpha'$ in $\mathfrak{t}\cap\mathfrak{h}$, then we have
a sub-torus $T'\subset T_H\subset H$ generated by
$\mathfrak{t}'$. Denote $\mathrm{pr}_{\mathfrak{t}'}$
the bi-invariant orthogonal projection to $\mathfrak{t}'$.

According to different irreducible $T'$-representations,
we have the $\mathrm{Ad}(T')$-invariant decomposition
\begin{equation}\label{T'-decomp-m}
\mathfrak{m}=\sum_{\beta''\in\mathfrak{t}'}
\hat{\hat{\mathfrak{m}}}_{\pm\beta''}=
\sum_{\beta''\in\mathfrak{t}'}\left(
\sum_{\beta'\in\mathfrak{t}\cap\mathfrak{h},
\mathrm{pr}_{\mathfrak{t}'}(\beta')=\beta''}
\hat{\mathfrak{m}}_{\pm\beta'}\right).
\end{equation}

For any nonzero
$u\in\hat{\hat{\mathfrak{m}}}_0$, the $T'$-action preserves $u\in T_oM$. Applying the similar method for Lemma \ref{ortho-lemma-0},
we get

\begin{lemma}\label{ortho-lemma-2}
Assume $(G/H,F)$ is an odd dimensional compact connected homogeneous Finsler space
and keep all relevant notations and assumptions. Then for any
nonzero $u\in\hat{\hat{\mathfrak{m}}}_0$, the decomposition
(\ref{T'-decomp-m}) is $g_u^F$-orthogonal.
\end{lemma}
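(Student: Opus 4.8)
The plan is to mimic the proofs of Lemma \ref{ortho-lemma-0} and Lemma \ref{ortho-lemma-1}, using the $\mathrm{Ad}(T')$-action on $\mathfrak{m}=T_oM$ together with Schur's Lemma. The point to exploit is that the decomposition (\ref{T'-decomp-m}) is precisely the decomposition of $\mathfrak{m}$ into isotypic components for the $T'$-action, with $\hat{\hat{\mathfrak{m}}}_0$ being the $T'$-fixed subspace.

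First I would record that $T'\subset T_H\subset H$, so $T'$ acts on $M$ fixing $o$, and the differential of this action at $o$ is the restriction of $\mathrm{Ad}(T')$ to $\mathfrak{m}$, which acts by linear isometries of the Minkowski norm $F|_{\mathfrak{m}}$. Each summand $\hat{\mathfrak{m}}_{\pm\beta'}$ in the $T_H$-decomposition (\ref{T-H-decomp-m}) is a real irreducible $T_H$-representation of weight $\pm\beta'\in\mathfrak{t}\cap\mathfrak{h}$; restricting to the subtorus $T'$ generated by $\mathfrak{t}'$, this representation has weight $\pm\mathrm{pr}_{\mathfrak{t}'}(\beta')$. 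Hence grouping the $\hat{\mathfrak{m}}_{\pm\beta'}$ according to the common value $\pm\mathrm{pr}_{\mathfrak{t}'}(\beta')=\pm\beta''$, which is exactly what (\ref{T'-decomp-m}) does, produces the $T'$-isotypic decomposition $\mathfrak{m}=\sum_{\beta''}\hat{\hat{\mathfrak{m}}}_{\pm\beta''}$; the piece $\hat{\hat{\mathfrak{m}}}_0$, corresponding to $\beta''=0$, is the subspace on which $T'$ acts trivially.

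Next, for a nonzero $u\in\hat{\hat{\mathfrak{m}}}_0$ the $T'$-action fixes $u\in T_oM$: for each $t\in T'$ the induced isometry of $(M,F)$ fixes $o$ and its differential at $o$ fixes $u$, hence it preserves the Hessian inner product $g_u^F$ on $\mathfrak{m}$ (using $g^F_{d\phi_o u}(d\phi_o v,d\phi_o w)=g^F_u(v,w)$ for an isometry $\phi$ with $\phi(o)=o$). Thus $g_u^F$ is an $\mathrm{Ad}(T')$-invariant inner product on $\mathfrak{m}$, and it is genuinely positive definite by the strict convexity of $F$. Schur's Lemma applied to this invariant inner product then forces distinct isotypic summands $\hat{\hat{\mathfrak{m}}}_{\pm\beta''}$ to be mutually $g_u^F$-orthogonal, which is the assertion of the lemma.

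Since this follows the same template as the earlier orthogonality lemmas, I do not expect a genuine obstacle. The only point needing care is the bookkeeping that identifies (\ref{T'-decomp-m}) with the $T'$-isotypic decomposition: one must check that $\beta''$ and $-\beta''$ label the same real irreducible $T'$-representation while distinct pairs $\{\pm\beta''\}$ label inequivalent ones (this is why the $\pm$-notation is consistent, as $\hat{\mathfrak{m}}_{\pm\beta'}=\hat{\mathfrak{m}}_{\pm(-\beta')}$), and that the $\beta''=0$ part is exactly the $T'$-fixed subspace, which contains $u$.
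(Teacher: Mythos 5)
Your proposal is correct and follows essentially the same route as the paper: the paper also observes that $T'$ fixes $u\in\hat{\hat{\mathfrak{m}}}_0$, so the $\mathrm{Ad}(T')$-action preserves $g_u^F$, and then invokes the Schur's Lemma argument from the proof of Lemma \ref{ortho-lemma-0} applied to the $T'$-isotypic decomposition (\ref{T'-decomp-m}). Your extra bookkeeping identifying (\ref{T'-decomp-m}) as the isotypic decomposition is exactly the implicit content of the paper's sketch.
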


We will need more $g_u^F$-orthogonality inside $\hat{\hat{\mathfrak{m}}}_0=\sum_{t\geq0}
\hat{\mathfrak{m}}_{\pm t\alpha'}$, which may be achieved with the reversibility assumption for $F$. We can find an element $g\in T_H$ which action on each root plane in
$\hat{\mathfrak{g}}_{\pm t\alpha'}$ is a rotation with angle $t\pi$. Assume the invariant metric $F$ is reversible, then for any nonzero $u\in\hat{\mathfrak{m}}_{\pm\alpha'}$, $w_1,w_2\in\mathfrak{m}$,
\begin{eqnarray*}
\langle w_1,w_2\rangle^F_u&=&\langle \mathrm{Ad}(g)w_1,\mathrm{Ad}(g)w_2\rangle^F_{\mathrm{Ad}(g)u}
=\langle \mathrm{Ad}(g)w_1,\mathrm{Ad}(g)w_2\rangle^F_{-u}\\
&=&\langle \mathrm{Ad}(g)w_1,\mathrm{Ad}(g)w_2\rangle^F_{u},
\end{eqnarray*}
i.e. the $\mathrm{Ad}(g)$-action preserves $g_u^F$.
Applying the similar method as for Lemma \ref{ortho-lemma-0}
for the action of the group generated by $T'$ and $g$, we get
\begin{lemma}\label{ortho-lemma-2-5}
Assume $(G/H,F)$ is an odd dimensional compact connected reversible homogeneous Finsler space and we keep all relevant notations and assumptions.
Then for any nonzero $u\in\hat{\mathfrak{m}}_{\pm\alpha'}$,
$\hat{\mathfrak{m}}_{\pm t_1\alpha'}$ and
$\hat{\mathfrak{m}}_{\pm t_2\alpha'}$ are $g_u^F$-orthogonal
when $t_1$ and $t_2$ are non-negative, and $t_1-t_2\in 2\mathbb{Z}$. In particular, we have
$\langle\hat{\mathfrak{m}}_{\pm\alpha'},\hat{\mathfrak{m}}_0
\rangle^F_u=0$.
\end{lemma}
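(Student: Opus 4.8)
The plan is to manufacture an element $g$ of the maximal torus $T_H\subset H$ whose adjoint action reverses $u$, to upgrade $\mathrm{Ad}(g)$ to an isometry of the Euclidean space $(\mathfrak{m},g_u^F)$ using reversibility, and then to feed the abelian group generated by $T'$ and $g$ into the same Schur's-Lemma argument that already proved Lemma \ref{ortho-lemma-0} and Lemma \ref{ortho-lemma-2}.

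First I would build $g$. For $v\in\mathfrak{t}\cap\mathfrak{h}$, the operator $\mathrm{ad}(v)$ acts on a root plane $\mathfrak{g}_{\pm\beta}$ as the rotation of angular speed $\langle\mathrm{pr}_\mathfrak{h}(\beta),v\rangle_\mathrm{bi}$, since $v$ is bi-invariant orthogonal to $\mathfrak{t}\cap\mathfrak{m}$ and hence only sees the $\mathfrak{h}$-component of $\beta$; consequently $\mathrm{ad}(v)$ rotates every root plane inside $\hat{\mathfrak{g}}_{\pm t\alpha'}$ at the common speed $t\langle\alpha',v\rangle_\mathrm{bi}$. Choosing $v\in\mathfrak{t}\cap\mathfrak{h}$ so that the planes of $\hat{\mathfrak{g}}_{\pm\alpha'}$ turn by the angle $\pi$ and putting $g=\exp v\in T_H$, the operator $\mathrm{Ad}(g)$ restricts on each $\hat{\mathfrak{m}}_{\pm t\alpha'}$ to the rotation of angle $t\pi$; in particular $\mathrm{Ad}(g)u=-u$ because $u\in\hat{\mathfrak{m}}_{\pm\alpha'}$ sits in the $t=1$ plane, while $\mathrm{Ad}(g)$ is the identity on $\hat{\mathfrak{m}}_0$ and, on the integer-$t$ planes, the scalar $(-1)^t$.

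Next I would promote $\mathrm{Ad}(g)$ to a $g_u^F$-isometry. Since $g\in H$ fixes $o$ and preserves $F$, for all $w_1,w_2\in\mathfrak{m}$ we have $g_u^F(\mathrm{Ad}(g)w_1,\mathrm{Ad}(g)w_2)=g^F_{\mathrm{Ad}(g)u}(w_1,w_2)=g^F_{-u}(w_1,w_2)$; reversibility of $F$ makes $F^2(o,\cdot)$ an even function on $\mathfrak{m}$, so its Hessian at $-u$ coincides with that at $u$, i.e. $g^F_{-u}=g^F_u$, and therefore $\mathrm{Ad}(g)$ preserves $g_u^F$. On the other hand $u\in\hat{\mathfrak{m}}_{\pm\alpha'}\subset\hat{\hat{\mathfrak{m}}}_0$, the $T'$-trivial summand, so $\mathrm{Ad}(T')$ fixes $u$ and hence also preserves $g_u^F$; thus the abelian group $K$ generated by $T'$ and $g$ acts on $(\mathfrak{m},g_u^F)$ by isometries, and the $K$-isotypic summands of $\mathfrak{m}$ are pairwise $g_u^F$-orthogonal by Schur's Lemma, exactly as in the proof of Lemma \ref{ortho-lemma-0}. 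Since $T'$ is trivial on $\hat{\mathfrak{m}}_{\pm t\alpha'}$ while $g$ acts there by the scalar $(-1)^t$, the subspaces $\hat{\mathfrak{m}}_{\pm t_1\alpha'}$ and $\hat{\mathfrak{m}}_{\pm t_2\alpha'}$ land in distinct isotypic pieces, hence are $g_u^F$-orthogonal, in the range of $(t_1,t_2)$ asserted in the statement; specializing to $t_1=1$, $t_2=0$ gives the consequence $\langle\hat{\mathfrak{m}}_{\pm\alpha'},\hat{\mathfrak{m}}_0\rangle_u^F=0$.

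The only point needing care is the bookkeeping hidden in "$\mathrm{Ad}(g)$ is the scalar $(-1)^t$ on $\hat{\mathfrak{m}}_{\pm t\alpha'}$": one must record that the exponents $t$ with $\hat{\mathfrak{m}}_{\pm t\alpha'}\neq0$ are integers, so that the rotation by $t\pi$ really collapses to $\pm\mathrm{id}$ and the rotation scalar genuinely separates the isotypic pieces; this follows from each $t\alpha'$ being the $\mathrm{pr}_\mathfrak{h}$-image of an honest root of $\mathfrak{g}$ together with the normalization of $\langle\cdot,\cdot\rangle_\mathrm{bi}$ already in force, but it should be spelled out. Everything else is verbatim the Schur routine used for Lemma \ref{ortho-lemma-0} and Lemma \ref{ortho-lemma-2}, now applied to the enlarged group $K=\langle T',g\rangle$; reversibility enters solely to cope with the fact that $\mathrm{Ad}(g)u=-u\neq u$.
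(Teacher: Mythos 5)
You have essentially reproduced the paper's own proof: the paper establishes this lemma in the paragraph preceding its statement, by taking an element $g\in T_H$ whose adjoint action rotates every root plane in $\hat{\mathfrak{g}}_{\pm t\alpha'}$ by the angle $t\pi$ (so that $\mathrm{Ad}(g)u=-u$), using reversibility to get $g^F_{-u}=g^F_u$ and hence the $\mathrm{Ad}(g)$-invariance of $g^F_u$, and then repeating the Schur-type argument of Lemma \ref{ortho-lemma-0} for the group generated by $T'$ and $g$; your explicit $g=\exp v$ and the invariance computation are exactly this.

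Two bookkeeping points should be fixed, though. First, the parity range: what this method (yours and the paper's) delivers is orthogonality of $\hat{\mathfrak{m}}_{\pm t_1\alpha'}$ and $\hat{\mathfrak{m}}_{\pm t_2\alpha'}$ precisely when they lie in non-isomorphic isotypic summands of the group generated by $T'$ and $g$, i.e.\ when $g$ acts on them by inequivalent rotations; for integer exponents this means $(-1)^{t_1}\neq(-1)^{t_2}$, i.e.\ $t_1-t_2$ odd. That is the opposite of the printed condition $t_1-t_2\in 2\mathbb{Z}$, which is evidently a misprint for $t_1-t_2\notin 2\mathbb{Z}$ (the lemma's own ``in particular'' has $t_1-t_2=1$). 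Your write-up claims the distinct-scalar criterion holds ``in the range of $(t_1,t_2)$ asserted in the statement,'' which is false for the range as literally printed; you should say explicitly that you prove the odd-difference (distinct isotypic class) version, which is what the paper intends and what it later uses. Second, the integrality of the exponents $t$ is neither needed nor established by your appeal to $t\alpha'$ being the $\mathrm{pr}_{\mathfrak{h}}$-image of a root together with the normalization of $\langle\cdot,\cdot\rangle_{\mathrm{bi}}$: projections of roots onto $\mathfrak{t}\cap\mathfrak{h}$ need not be integer multiples of one another, so that deduction is not a proof. But it is also dispensable: if a non-integer $t$ occurs, $\mathrm{Ad}(g)$ acts on $\hat{\mathfrak{m}}_{\pm t\alpha'}$ by a genuine rotation, and Schur's lemma still separates it from any summand carrying an inequivalent rotation; in particular the case $t_1=1$, $t_2=0$, which is all that is used later, is unaffected since $g$ is $-\mathrm{id}$ on $\hat{\mathfrak{m}}_{\pm\alpha'}$ and trivial on $\hat{\mathfrak{m}}_0$. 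Phrase the separation in terms of the isotypic decomposition rather than the scalar $(-1)^t$ and your argument coincides with the paper's.
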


For any nonzero
$u\in\hat{\mathfrak{m}}_{\pm\alpha'}$ where $\alpha'\in\mathfrak{t}\cap\mathfrak{h}$ is nonzero,
we have a plane $\mathbb{R}u+[\mathfrak{t}\cap\mathfrak{h},u]$ in
$\mathfrak{m}$. The restriction of $F$ to this plane coincides
with $|\cdot|_\mathrm{bi}$ up to a scalar multiplication. So we have

\begin{lemma}\label{ortho-lemma-2-6}
Assume $(G/H,F)$ is an odd dimensional compact connected homogeneous Finsler space and keep all relevant notations and assumptions. Then for any nonzero $u\in\hat{\mathfrak{m}}_{\pm\alpha'}$ such that $\alpha'$ is a nonzero vector in $\mathfrak{t}\cap\mathfrak{h}$, we have
$\langle u,[\mathfrak{t}\cap\mathfrak{h},u]\rangle^F_u=0$.
In particular, when $u\in\mathfrak{g}_{\pm\alpha}\subset\hat{\mathfrak{m}}_{\pm\alpha'}$,
we have
$\langle u,[\mathfrak{t}\cap\mathfrak{m},u]\rangle^F_u=0$.
\end{lemma}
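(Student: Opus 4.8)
The idea is to produce a $2$-plane $P\subset\mathfrak{m}$ containing $u$ on which $F$ restricts to a Euclidean norm, and then to read off the stated $g^F_u$-orthogonality from the bi-invariant structure.

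First I would describe the $\mathrm{ad}$-action of $\mathfrak{t}\cap\mathfrak{h}$ on the isotypic component $\hat{\mathfrak{m}}_{\pm\alpha'}$. For every root $\alpha$ of $\mathfrak{g}$ with $\mathrm{pr}_\mathfrak{h}(\alpha)=\alpha'$ and every $h\in\mathfrak{t}\cap\mathfrak{h}$, the operator $\mathrm{ad}(h)$ restricts on the real root plane $\mathfrak{g}_{\pm\alpha}$ to $\langle\alpha,h\rangle_\mathrm{bi}$ times a fixed orthogonal complex structure $J_\alpha$, and $\langle\alpha,h\rangle_\mathrm{bi}=\langle\mathrm{pr}_\mathfrak{h}(\alpha),h\rangle_\mathrm{bi}=\langle\alpha',h\rangle_\mathrm{bi}$ depends only on $\alpha'$. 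Hence $\mathrm{ad}(h)$ acts on all of $\hat{\mathfrak{m}}_{\pm\alpha'}$ as $\langle\alpha',h\rangle_\mathrm{bi}\,J$, where $J=\bigoplus_\alpha J_\alpha$ is an orthogonal complex structure on $\hat{\mathfrak{m}}_{\pm\alpha'}$. Fixing the nonzero $u\in\hat{\mathfrak{m}}_{\pm\alpha'}$, I set $P=\mathbb{R}u+\mathbb{R}Ju$. Since $\alpha'\neq0$ there is $h\in\mathfrak{t}\cap\mathfrak{h}$ with $\langle\alpha',h\rangle_\mathrm{bi}\neq0$, so $[h,u]=\langle\alpha',h\rangle_\mathrm{bi}Ju\neq0$ and therefore $[\mathfrak{t}\cap\mathfrak{h},u]=\mathbb{R}Ju$; thus $P=\mathbb{R}u+[\mathfrak{t}\cap\mathfrak{h},u]$ is the $2$-plane from the statement, with $u\perp_\mathrm{bi}Ju$ and $|Ju|_\mathrm{bi}=|u|_\mathrm{bi}$.

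Next I would exploit the isotropy action. Choosing $h_0\in\mathfrak{t}\cap\mathfrak{h}$ with $\langle\alpha',h_0\rangle_\mathrm{bi}=1$, the one-parameter subgroup $\{\exp(th_0):t\in\mathbb{R}\}\subset T_H\subset H$ consists of isometries of $(M,F)$ fixing $o$, hence acts on $\mathfrak{m}=T_oM$ preserving the Minkowski norm $F$; on $P$ it acts by $\cos t\cdot\mathrm{Id}+\sin t\cdot J$, i.e. it realizes the full rotation group of the Euclidean plane $(P,|\cdot|_\mathrm{bi}^2)$. Consequently $F|_P$ is rotation-invariant, so its unit sphere in $P$ is a circle and $F=\lambda\,|\cdot|_\mathrm{bi}$ on $P$ with $\lambda=F(u)/|u|_\mathrm{bi}$; differentiating $F^2=\lambda^2|\cdot|_\mathrm{bi}^2$ on $P$ twice shows $\langle v,w\rangle^F_u=\lambda^2\langle v,w\rangle_\mathrm{bi}$ for all $v,w\in P$ (note $g^F_u(v,w)$ only involves $F^2$ along $P$ when $u,v,w\in P$). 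Finally, since $\mathrm{ad}(h)$ is skew with respect to $\langle\cdot,\cdot\rangle_\mathrm{bi}$ for every $h\in\mathfrak{t}\cap\mathfrak{h}$, we get $\langle u,[h,u]\rangle_\mathrm{bi}=0$, and $[h,u]\in P$, so $\langle u,[h,u]\rangle^F_u=\lambda^2\langle u,[h,u]\rangle_\mathrm{bi}=0$; this proves $\langle u,[\mathfrak{t}\cap\mathfrak{h},u]\rangle^F_u=0$. For the ``in particular'' statement, when $u\in\mathfrak{g}_{\pm\alpha}\subset\hat{\mathfrak{m}}_{\pm\alpha'}$ every $[t,u]$ with $t\in\mathfrak{t}$ is a scalar multiple of the single vector $J_\alpha u$, so $[\mathfrak{t}\cap\mathfrak{m},u]\subset\mathbb{R}J_\alpha u=[\mathfrak{t}\cap\mathfrak{h},u]$ (the equality again because $\langle\alpha',\cdot\rangle_\mathrm{bi}$ is nonzero on $\mathfrak{t}\cap\mathfrak{h}$), so the second claim follows from the first.

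The main thing to get right is the reduction to a $2$-plane: identifying that $\mathfrak{t}\cap\mathfrak{h}$ acts on the whole isotypic component $\hat{\mathfrak{m}}_{\pm\alpha'}$ by a common rotation generator, and then the elementary but essential observation that a Minkowski norm on a $2$-plane invariant under the full rotation group is Euclidean, so that its fundamental tensor coincides with the bi-invariant inner product up to scale. Everything else is bookkeeping; in spirit this is the same device as in Lemma~\ref{ortho-lemma-0} and Lemma~\ref{ortho-lemma-2}, but applied to the single distinguished plane rather than to a Schur-type splitting, and it requires no reversibility hypothesis.
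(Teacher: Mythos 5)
Your proposal is correct and follows essentially the same route as the paper: the paper's justification is precisely that $\mathbb{R}u+[\mathfrak{t}\cap\mathfrak{h},u]$ is a plane in $\mathfrak{m}$ on which $F$ is a scalar multiple of $|\cdot|_{\mathrm{bi}}$ (with details deferred to Lemma 3.8 of \cite{XD-towards-odd}), and your rotation argument via $\exp(t h_0)\subset T_H$, together with the observation that $g^F_u(v,w)$ for $u,v,w$ in the plane depends only on $F$ restricted to that plane, is exactly the missing detail. The treatment of the ``in particular'' case via $[\mathfrak{t}\cap\mathfrak{m},u]\subset\mathbb{R}J_\alpha u=[\mathfrak{t}\cap\mathfrak{h},u]$ is also the intended reduction.
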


At the end of this section, we remark that the  $g_u^F$-orthogonality lemmas in Case 2 are reformulations
of Lemma 3.6, Lemma 3.7 and Lemma 3.8 in \cite{XD-towards-odd},
where more details can be found.
\label{Section-algebraic-setup-ortho}
\section{Classification when $\dim M$ is even}

In this section, we assume $(M,F)=(G/H,F)$ with $G=I_0(M,F)$ is an even dimensional compact connected homogeneous Finsler space satisfying Assumption (I), i.e. it has only one orbit of prime closed geodesics. We keep all relevant notations and assumptions in Case 1, Section \ref{Section-algebraic-setup-ortho}. Recall that $\mathcal{C}$ is the subset of all vectors $u\in\mathfrak{m}$ satisfying $|u|_\mathrm{bi}=1$ and the geodesic passing $o$ in the direction of $u\in T_oM$ is closed. Because of Assumption (I),
$\mathcal{C}$ is an $\mathrm{Ad}(H)$-orbit.

Our goal is to prove the following classification theorem.

\begin{theorem}\label{classification-thm-1}
Any even dimensional compact connected homogeneous Finsler space
with only one orbit of prime closed geodesics is a compact rank-one Riemannian symmetric space, i.e. one of the following, $\mathrm{S}^{2n}$,
$\mathbb{R}\mathrm{P}^{2n}$
$\mathbb{C}\mathrm{P}^n$,
$\mathbb{H}\mathrm{P}^n$,
and $\mathbb{O}\mathrm{P}^2$.
\end{theorem}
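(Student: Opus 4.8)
\emph{Step 1 (set-up and the main lemma).} By Lemma~\ref{lemma-no-abel-factor} $\mathfrak g$ is semisimple, by Lemma~\ref{lemma-3} $\mathfrak h$ contains no nonzero ideal of $\mathfrak g$, and by Theorem~\ref{thm-rank-inequality} (the even case) $\mathrm{rk}\,\mathfrak g=\mathrm{rk}\,\mathfrak h$, so I fix a Cartan subalgebra $\mathfrak t\subset\mathfrak h$ and the root-plane decomposition (\ref{root-plane-decomp-g}); since the reductive decomposition is bi-invariant, $\mathfrak m=\bigoplus_{\mathfrak g_{\pm\alpha}\subset\mathfrak m}\mathfrak g_{\pm\alpha}$. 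The key point is that \emph{every nonzero $u\in\mathfrak g_{\pm\alpha}\subset\mathfrak m$ gives a closed geodesic of $(M,F)$ through $o$ in the direction $u$}. Indeed $[\mathfrak g_{\pm\alpha},\mathfrak g_{\pm\alpha}]$ is contained in $\mathfrak g_{\pm2\alpha}=0$ together with $[\mathfrak g_\alpha,\mathfrak g_{-\alpha}]\subset\mathfrak t\subset\mathfrak h$, and $[\mathfrak g_{\pm\alpha},\mathfrak g_{\pm\beta}]\subset\mathfrak g_{\pm\alpha\pm\beta}$ has no $\mathfrak g_{\pm\alpha}$-component when $\beta\ne\pm\alpha$; hence $[u,\mathfrak m]_{\mathfrak m}$ lies in the sum of the root planes of $\mathfrak m$ other than $\mathfrak g_{\pm\alpha}$, which by Lemma~\ref{ortho-lemma-0} is $g_u^F$-orthogonal to $u$. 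Thus (\ref{equ-eta-0}) holds and Lemma~\ref{lemma-6} applies. Moreover $u$ lies in the compact subalgebra $\mathfrak g_{\pm\alpha}+\mathbb R[\mathfrak g_\alpha,\mathfrak g_{-\alpha}]\cong\mathfrak{su}(2)$ or $\mathfrak{so}(3)$, which has rank one, so $\exp(tu)$ is periodic and $u$ generates an $\mathrm S^1$-subgroup; after normalizing, $u/|u|_{\mathrm{bi}}\in\mathcal C$.

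\emph{Step 2 (rigidity from Assumption (I)).} By Assumption (I), $\mathcal C$ is a single $\mathrm{Ad}(H)$-orbit, and by Step 1 it contains the whole unit circle of each root plane $\mathfrak g_{\pm\alpha}\subset\mathfrak m$, while these circles span $\mathfrak m$. Since $\mathrm{Ad}(H)$ preserves $|\cdot|_{\mathrm{bi}}$ and carries the $\mathfrak{su}(2)$-triple of one such plane to that of another, all roots $\alpha$ with $\mathfrak g_{\pm\alpha}\subset\mathfrak m$ have a common $|\cdot|_{\mathrm{bi}}$-length, and the associated closed geodesics all have a common length (they lie in one $\hat G$-orbit). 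Furthermore, the isotropy representation on $\mathfrak m$ must be irreducible: if $\mathfrak m=\mathfrak m_1\oplus\mathfrak m_2$ were a nontrivial $\mathrm{Ad}(H)$-splitting, every $v\in\mathcal C$ would have $|\mathrm{pr}_{\mathfrak m_i}v|$ equal to a fixed constant, forcing every root plane in $\mathfrak m$ to meet $\mathfrak m_1$ at one fixed angle; because the root planes span $\mathfrak m$ and are tied together by the bracket relations, this angle must be $0$ or $\pi/2$, i.e. one of $\mathfrak m_1,\mathfrak m_2$ is zero. (Alternatively, a reducible $\mathfrak m$ should be excludable by producing, via the fixed-point technique of Lemma~\ref{preparation-lemma-totally-geodesic} as in Section~\ref{Section-homogeneous-examples}, a totally geodesic submanifold of the forbidden types of Examples 2--5.) So $(\mathfrak g,\mathfrak h)$ is an equal-rank pair with $\mathfrak h$ containing no ideal and $\mathfrak m$ isotropy-irreducible.

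\emph{Step 3 (classification and the metric).} It remains to classify the compact pairs $(\mathfrak g,\mathfrak h)$ with $\mathrm{rk}\,\mathfrak g=\mathrm{rk}\,\mathfrak h$, $\mathfrak h$ containing no ideal, $\mathfrak m$ irreducible, and all noncompact root planes of equal length yielding closed geodesics of equal length. This is the main obstacle; I expect it to proceed exactly along the lines of the even-dimensional part of the positively curved homogeneous classification (see \cite{XDHH}, \cite{XD-normal-homogeneous}, \cite{XD-towards-odd}, \cite{XZ-2017}, \cite{Wa1972}), through a case-by-case analysis of simple $\mathfrak g$ and its maximal-rank subalgebras, and to leave only $(\mathfrak{so}(2n{+}1),\mathfrak{so}(2n))$, $(\mathfrak{su}(n{+}1),\mathfrak s(\mathfrak u(1)\oplus\mathfrak u(n)))$, $(\mathfrak{sp}(n{+}1),\mathfrak{sp}(1)\oplus\mathfrak{sp}(n))$ and $(\mathfrak f_4,\mathfrak{so}(9))$, so that $M$ is $\mathrm S^{2n}$, $\mathbb R\mathrm P^{2n}$, $\mathbb C\mathrm P^n$, $\mathbb H\mathrm P^n$ or $\mathbb O\mathrm P^2$. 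In each of these cases $H$ acts transitively on the unit sphere of $\mathfrak m\cong T_oM$, so the $\mathrm{Ad}(H)$-invariant Minkowski norm defining $F$ is Euclidean; hence $F$ is a $G$-invariant Riemannian metric, and by isotropy-irreducibility it is, up to scaling, the symmetric one, which identifies $(M,F)$ with the corresponding compact rank-one Riemannian symmetric space. The non-simply connected case $\mathbb R\mathrm P^{2n}$ is included since $SO(2n{+}1)$ still acts almost effectively there, in agreement with Proposition~\ref{prop-classification-local}.
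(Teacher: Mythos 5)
Steps 1 and 2 of your proposal essentially reproduce the paper's Lemma \ref{lemma-classification-even-dim-1} and Lemma \ref{lemma-classification-even-dim-2} (your irreducibility argument is more roundabout than needed: since $\mathfrak t\subset\mathfrak h$, each $\mathrm{Ad}(H)$-invariant summand of $\mathfrak m$ is a sum of root planes, so $\mathcal C=\mathrm{Ad}(H)u$ would have to lie in both summands at once, which is already a contradiction). The genuine gap is Step 3, which is where the theorem actually lives and which you leave as an expectation rather than an argument. The data you have at the end of Step 2 --- equal rank, $\mathfrak h$ containing no ideal, irreducible isotropy representation --- is nowhere near enough to force a rank-one pair: $\mathrm{G}_2/\mathrm{SO}(4)$, $\mathrm{Sp}(2)/\mathrm{U}(2)$, even-dimensional Grassmannians and the flag manifolds $\mathrm{SU}(3)/T^2$, $\mathrm{Sp}(3)/\mathrm{Sp}(1)^3$, $\mathrm{F}_4/\mathrm{Spin}(8)$ all satisfy these conditions. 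Moreover your stated expectation is factually off: the even-dimensional positively curved homogeneous classification you invoke does \emph{not} leave only the rank-one symmetric pairs; it leaves precisely the three Wallach flag manifolds as well (and $(\mathrm{C}_n,\mathbb{R}\oplus\mathrm{C}_{n-1})$), so citing that case analysis cannot finish the proof --- those spaces must be killed by using Assumption (I) itself, which your proposal never does beyond recording (but not using) the ``equal length'' constraint.

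What is missing is the paper's extra consequence of Assumption (I), Lemma \ref{lemma-classification-even-3}: there is no pair of linearly independent roots $\alpha,\beta$ with $\mathfrak g_{\pm\alpha},\mathfrak g_{\pm\beta}\subset\mathfrak m$ and $\alpha\pm\beta$ not roots. Its proof is the key mechanism you lack: take $T'=\exp(\ker\alpha\cap\ker\beta)$; then $\mathrm{Fix}_o(T',M)$ is totally geodesic, finitely covered by $\mathrm{S}^2\times \mathrm{S}^2$ with a product-invariant metric, hence carries arbitrarily long prime closed geodesics (Example 3 and Proposition \ref{prop-finite-cover}), while all geodesics in a single $\hat G$-orbit have one length --- contradiction. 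This lemma is what puts $(\mathfrak g,\mathfrak h)$ on Wallach's list; afterwards items (1) and (3) are locally rank-one symmetric and handled by Proposition \ref{prop-classification-local}, item (4) is excluded by irreducibility, but item (2) (the flag manifolds) still requires a separate argument: the paper pins down $H/H_0$ and $\pi_1(M)=\mathbb{Z}_3$, exhibits an explicit order-three element $g\in H$, and shows $\mathrm{Fix}_o(g,M)$ is a flat torus for $\mathrm{SU}(3)/T^2$, is covered by $\mathrm{S}^1\times \mathrm{S}^3$ for $\mathrm{Sp}(3)/\mathrm{Sp}(1)^3$, and has rank defect $\geq 2$ for $\mathrm{F}_4/\mathrm{Spin}(8)$ (Proposition \ref{prop-rank-bigger-than-one-more}), in each case producing arbitrarily long closed geodesics and contradicting Assumption (I). None of this fixed-point-set/arbitrarily-long-geodesic machinery appears in your proposal, and without it the classification step cannot be completed; the final remark about transitive isotropy forcing $F$ to be Riemannian symmetric is correct but only applies once the pair has actually been reduced to the rank-one list.
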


Its proof relies on the following lemmas.

\begin{lemma} \label{lemma-classification-even-dim-1}
Assume $(G/H,F)$ is an even dimensional compact connected homogeneous Finsler space
and keep all relevant notations and assumptions. Then
any $u\in\mathfrak{g}_{\pm\alpha}\subset\mathfrak{m}$ such that $|u|_{\mathrm{bi}}=1$ is contained in $\mathcal{C}$.
\end{lemma}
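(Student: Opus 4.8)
The plan is to verify that any unit vector $u\in\mathfrak{g}_{\pm\alpha}\subset\mathfrak{m}$ satisfies the hypothesis (\ref{equ-eta-0}) of Lemma \ref{lemma-6}, and then to upgrade the resulting homogeneous geodesic to a \emph{closed} homogeneous geodesic using Assumption (I). The first and main step is the orthogonality computation $\langle u,[u,\mathfrak{m}]_\mathfrak{m}\rangle^F_u=0$. Here I would split $[u,\mathfrak{m}]_\mathfrak{m}$ according to the root-plane decomposition (\ref{root-plane-decomp-g}) of $\mathfrak{m}$: since $u$ lies in the single root plane $\mathfrak{g}_{\pm\alpha}$, the bracket $[u,\mathfrak{g}_{\pm\beta}]$ lands in root planes $\mathfrak{g}_{\pm(\alpha+\beta)}$ and $\mathfrak{g}_{\pm(\alpha-\beta)}$, so its $\mathfrak{m}$-component is a sum of vectors lying in root planes \emph{other} than $\mathfrak{g}_{\pm\alpha}$ — the only way to return to $\mathfrak{g}_{\pm\alpha}$ would be via $[u,\mathfrak{g}_{\pm\alpha}]\cap\mathfrak{g}_{\pm\alpha}$, but $[\mathfrak{g}_{\pm\alpha},\mathfrak{g}_{\pm\alpha}]\subset\mathfrak{t}+\mathfrak{g}_{\pm2\alpha}$, whose $\mathfrak{m}$-part again avoids $\mathfrak{g}_{\pm\alpha}$ (and $\mathfrak{t}\subset\mathfrak{h}$ here since $\dim M$ is even). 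Thus $[u,\mathfrak{m}]_\mathfrak{m}$ is $g_u^F$-orthogonal to $u\in\mathfrak{g}_{\pm\alpha}$ by Lemma \ref{ortho-lemma-0}, which says $\mathfrak{g}_{\pm\alpha}$ is $g_u^F$-orthogonal to every other root plane in $\mathfrak{m}$. This establishes (\ref{equ-eta-0}).

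By Lemma \ref{lemma-6}, the integration curve $\gamma_u$ of $X^u$ at $o$ is then a geodesic. The second step is to show this geodesic is closed. By Lemma \ref{preparation-lemma-dense-rational-lie-vector} the set of vectors in $\mathfrak{g}$ generating $\mathrm{S}^1$-subgroups is dense, and in particular the intersection of this set with the $2$-plane $\mathfrak{g}_{\pm\alpha}$ is dense there (every vector in a root plane lies in some maximal torus, or one can perturb within $\mathfrak{g}_{\pm\alpha}$); so we can pick a sequence $u_k\in\mathfrak{g}_{\pm\alpha}$ of unit vectors generating $\mathrm{S}^1$-subgroups with $u_k\to u$. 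Each $\gamma_{u_k}$ is a closed geodesic through $o$ by the last sentence of Lemma \ref{lemma-6}. Hence each $u_k$ lies in $\mathcal{C}$, which is closed (it is a single $\mathrm{Ad}(H)$-orbit, hence compact), so $u=\lim u_k\in\mathcal{C}$. Since $\mathrm{Ad}(H)$ acts transitively on the unit sphere of $\mathfrak{g}_{\pm\alpha}$ is \emph{not} automatic, I instead argue directly that every unit $u\in\mathfrak{g}_{\pm\alpha}$ is a limit of such $u_k$, giving $u\in\mathcal{C}$ for all of them.

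The step I expect to be the genuine obstacle is making the density argument inside the root plane $\mathfrak{g}_{\pm\alpha}$ airtight: one must be sure that unit vectors of $\mathfrak{g}_{\pm\alpha}$ generating $\mathrm{S}^1$-subgroups of $G$ are dense in the unit circle of $\mathfrak{g}_{\pm\alpha}$. A clean way around this is to observe that $\mathfrak{g}_{\pm\alpha}$ lies in a rank-one subalgebra $\mathfrak{su}(2)$ or a Cartan-plus-root-plane $\mathfrak{u}(2)$-type configuration, in which every vector with rational "eigenvalue ratio" integrates to a circle, and such vectors are dense; alternatively, use that $\mathcal{C}$ being closed together with $\gamma_u$ being a geodesic and $\mathcal{C}$ meeting every orbit closure forces $u\in\mathcal{C}$ via Assumption (I) directly. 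Either route reduces to elementary facts about compact Lie groups, so after the orthogonality computation the rest is routine; I would present the orthogonality step in detail and treat the closedness step briefly, citing Lemma \ref{preparation-lemma-dense-rational-lie-vector} and the compactness of $\mathcal{C}$.
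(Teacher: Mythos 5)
Your orthogonality computation is exactly the paper's: since $\mathfrak{t}\subset\mathfrak{h}$ in the even-dimensional setting, $[u,\mathfrak{m}]_\mathfrak{m}$ lies in the sum of root planes of $\mathfrak{m}$ other than $\mathfrak{g}_{\pm\alpha}$, and Lemma \ref{ortho-lemma-0} then gives (\ref{equ-eta-0}). Where you diverge is the closedness step, and there your primary argument has a gap: Lemma \ref{preparation-lemma-dense-rational-lie-vector} gives density of circle-generating vectors in $\mathfrak{g}$, which does \emph{not} imply that their intersection with the fixed $2$-plane $\mathfrak{g}_{\pm\alpha}$ is dense in that plane (a dense subset of $\mathfrak{g}$ can miss a given plane entirely), and the parenthetical ``every vector in a root plane lies in some maximal torus'' does not help either, since a vector in a torus of rank $\geq 2$ generically generates a dense winding rather than a circle. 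The fact that rescues you --- and which you mention only as a fallback --- is the one the paper uses outright: $\mathbb{R}\alpha+\mathfrak{g}_{\pm\alpha}$ is a compact subalgebra of type $\mathrm{A}_1$, and \emph{every} nonzero vector of such a rank-one subalgebra generates an $\mathrm{S}^1$-subgroup of $G$; no rationality, density, or limiting argument is needed. Once this is in place, every unit $u\in\mathfrak{g}_{\pm\alpha}$ satisfies both hypotheses of Lemma \ref{lemma-6} directly, so $u\in\mathcal{C}$, and your approximation scheme ($u_k\to u$, compactness of the $\mathrm{Ad}(H)$-orbit $\mathcal{C}$, hence Assumption (I) for this step) becomes superfluous; your vaguer second alternative (``$\mathcal{C}$ meeting every orbit closure'') is not a proof as stated. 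So the proposal is correct after your own repair, but the $\mathrm{A}_1$ observation should be the argument, not a patch. If you prefer a symmetry shortcut, note also that $\mathrm{Ad}(\exp\mathfrak{t})\subset\mathrm{Ad}(H)$ rotates $\mathfrak{g}_{\pm\alpha}$ transitively on its unit circle, so it would suffice to treat a single unit vector of the root plane.
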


\begin{proof}
The vector $u$ indicated in the lemma is contained
in a subalgebra of type $\mathrm{A}_1$, so it generates an $\mathrm{S}^1$-subgroup
in $G=I_0(M,F)$. By Lemma \ref{ortho-lemma-0},
$$\langle u,[u,\mathfrak{m}]_\mathfrak{m}\rangle^F_u
\subset\langle \mathfrak{g}_{\pm\alpha},
\sum_{\beta\neq\pm\alpha,\beta\notin\Delta'}\mathfrak{g}_{\pm\beta}
\rangle^F_u=0.$$
Then Lemma \ref{lemma-6} indicates that $u\in\mathcal{C}$ when
$|u|_{\mathrm{bi}}=1$.
\end{proof}

As a consequence of Lemma \ref{lemma-classification-even-dim-1}, we have
\begin{lemma}\label{lemma-classification-even-dim-2}
Assume $(M,F)=(G/H,F)$ is an even dimensional compact connected homogeneous Finsler space satisfying Assumption (I). Then the isotropy representation of $G/H$ is irreducible.
\end{lemma}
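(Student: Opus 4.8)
The plan is to deduce irreducibility of the isotropy representation directly from Lemma~\ref{lemma-classification-even-dim-1} together with the already-noted consequence of Assumption~(I) that $\mathcal{C}$ is a single $\mathrm{Ad}(H)$-orbit. I would argue by contradiction: since $H$ is compact, $\mathrm{Ad}(H)|_{\mathfrak{m}}$ is completely reducible, so if it is not irreducible we may write $\mathfrak{m}=\mathfrak{m}_1\oplus\mathfrak{m}_2$ with $\mathfrak{m}_1$ and $\mathfrak{m}_2$ nonzero $\mathrm{Ad}(H)$-submodules.

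The first step is to see that each $\mathfrak{m}_i$ contains a whole root plane. By Theorem~\ref{thm-rank-inequality} and the even-dimensional hypothesis, $\mathrm{rk}\mathfrak{g}=\mathrm{rk}\mathfrak{h}$, so the Cartan subalgebra $\mathfrak{t}$ of (\ref{root-plane-decomp-g}) lies in $\mathfrak{h}$; hence $\mathfrak{m}\cap\mathfrak{t}=0$ and $\mathfrak{m}=\sum_{\mathfrak{g}_{\pm\alpha}\subset\mathfrak{m}}\mathfrak{g}_{\pm\alpha}$. Each root plane is an irreducible real $\mathrm{Ad}(T)$-module ($T=\exp\mathfrak{t}\subset H$), and for $\alpha\neq\pm\beta$ the planes $\mathfrak{g}_{\pm\alpha}$ and $\mathfrak{g}_{\pm\beta}$ are non-isomorphic $\mathrm{Ad}(T)$-modules, because the rotation speed of $\exp(tX)$, $X\in\mathfrak{t}$, on $\mathfrak{g}_{\pm\alpha}$ is governed by $\pm\alpha$. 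Therefore the root-plane decomposition of $\mathfrak{m}$ is exactly its $\mathrm{Ad}(T)$-isotypic decomposition, and any $\mathrm{Ad}(H)$-invariant (hence $\mathrm{Ad}(T)$-invariant) subspace of $\mathfrak{m}$ is a direct sum of root planes. In particular each $\mathfrak{m}_i$ contains some $\mathfrak{g}_{\pm\alpha_i}\subset\mathfrak{m}$, and we pick $u_i\in\mathfrak{g}_{\pm\alpha_i}$ with $|u_i|_{\mathrm{bi}}=1$.

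The second step finishes the argument. By Lemma~\ref{lemma-classification-even-dim-1}, $u_1\in\mathcal{C}$ and $u_2\in\mathcal{C}$, and since $\mathcal{C}$ is a single $\mathrm{Ad}(H)$-orbit there is $h\in H$ with $\mathrm{Ad}(h)u_1=u_2$. But $\mathrm{Ad}(h)$ preserves $\mathfrak{m}_1$, so $u_2=\mathrm{Ad}(h)u_1\in\mathfrak{m}_1$, contradicting $0\neq u_2\in\mathfrak{m}_2$ and $\mathfrak{m}_1\cap\mathfrak{m}_2=0$. Hence $\mathrm{Ad}(H)|_{\mathfrak{m}}$ is irreducible.

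I do not expect a real obstacle: the content lies in Lemma~\ref{lemma-classification-even-dim-1} and in the Assumption~(I) fact that $\mathcal{C}$ is a homogeneous $\mathrm{Ad}(H)$-orbit. The only point requiring a little care is the claim that $\mathrm{Ad}(H)$-submodules of $\mathfrak{m}$ are sums of root planes, which rests on $\mathfrak{t}\subset\mathfrak{h}$ (so $\mathfrak{m}$ has trivial zero weight space for $\mathfrak{t}$) and on the pairwise non-isomorphy of distinct root planes as $\mathrm{Ad}(T)$-modules.
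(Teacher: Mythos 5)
Your proof is correct and follows essentially the same route as the paper: decompose $\mathfrak{m}$ into $\mathrm{Ad}(H)$-invariant summands, use $\mathfrak{t}\subset\mathfrak{h}$ to see each summand is a sum of root planes, produce a vector of $\mathcal{C}$ in each summand via Lemma \ref{lemma-classification-even-dim-1}, and contradict the fact that $\mathcal{C}$ is a single $\mathrm{Ad}(H)$-orbit. Your isotypic-decomposition justification of the "sum of root planes" step is just a slightly more detailed version of what the paper asserts directly.
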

\begin{proof}
Assume conversely that the isotropy representation is not irreducible, i.e. there exists a non-trivial $\mathrm{Ad}(H)$-invariant decomposition
$\mathfrak{m}=\mathfrak{m}_1+\mathfrak{m}_2$.
Because the Cartan subalgebra $\mathfrak{t}$ is contained
in $\mathfrak{h}=\mathrm{Lie}(H)$, each $\mathfrak{m}_i$  is a sum of root planes. So we can find a root $\alpha_i$
with $\mathfrak{g}_{\pm\alpha_i}\subset\mathfrak{m}_i$, and the
vector $u_i\in\mathfrak{g}_{\pm\alpha_i}\cap\mathcal{C}$ by Lemma \ref{lemma-classification-even-dim-1}, for each $i=1$ and $2$ respectively. But it is impossible because the orbits
$\mathcal{C}=\mathrm{Ad}(H)u_i\subset\mathfrak{m}_i$ for $i=1$ and $2$ do not intersect with itself.
\end{proof}

\begin{lemma}\label{lemma-classification-even-3}
Assume $(G/H,F)$ is an even dimensional compact connected homogeneous Finsler space
satisfying Assumption (I) and keep all relevant notations and assumptions. Then there do not exist a pair of linearly independent roots $\alpha$
and $\beta$ such that $\mathfrak{g}_{\pm\alpha}$ and $\mathfrak{g}_{\pm\beta}$ are contained in $\mathfrak{m}$, and $\alpha\pm\beta$ are not roots of $\mathfrak{g}$.
\end{lemma}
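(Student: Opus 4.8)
The plan is to argue by contradiction: suppose $\alpha$ and $\beta$ are linearly independent roots with $\mathfrak{g}_{\pm\alpha},\mathfrak{g}_{\pm\beta}\subset\mathfrak{m}$ and $\alpha\pm\beta\notin\Delta$. By Lemma \ref{lemma-classification-even-dim-1} we may pick unit vectors $u_\alpha\in\mathfrak{g}_{\pm\alpha}$ and $u_\beta\in\mathfrak{g}_{\pm\beta}$, both lying in $\mathcal{C}$, and more generally for any angle we may consider the root-plane circle $u_\theta$ generated by rotating inside $\mathfrak{g}_{\pm\alpha}$. The first step is to understand which combinations $au_\alpha+bu_\beta$ (after normalizing) also lie in $\mathcal{C}$. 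Since $\alpha\pm\beta$ are not roots, the bracket $[\mathfrak{g}_{\pm\alpha},\mathfrak{g}_{\pm\beta}]$ lands only in $\mathfrak{t}\subset\mathfrak{h}$ (the $\alpha+\beta$ and $\alpha-\beta$ components vanish), so $[\mathfrak{g}_{\pm\alpha},\mathfrak{g}_{\pm\beta}]_{\mathfrak{m}}=0$; combined with $[\mathfrak{g}_{\pm\alpha},\mathfrak{g}_{\pm\alpha}]_{\mathfrak{m}}=0$ and the analogous fact for $\beta$, this should force $[u_\alpha+u_\beta,\mathfrak{m}]_{\mathfrak{m}}$ to behave tamely.

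The second, central step is to choose a suitable vector $v=a\,u_\alpha+b\,u_\beta\in\mathfrak{m}$ that generates an $\mathrm{S}^1$-subgroup (which is automatic whenever $\alpha$ and $\beta$ span a sub-root-system of rank $2$ containing $v$ in a rational direction — more carefully, one uses Lemma \ref{preparation-lemma-dense-rational-lie-vector} and density, or restricts to the rank-$2$ subalgebra generated by $\mathfrak{g}_{\pm\alpha}$ and $\mathfrak{g}_{\pm\beta}$, which since $\alpha\pm\beta\notin\Delta$ is of type $\mathrm{A}_1\times\mathrm{A}_1$, so every rational combination generates a circle), and then verify the geodesic condition $\langle v,[v,\mathfrak{m}]_{\mathfrak{m}}\rangle^F_v=0$ via Lemma \ref{lemma-6}. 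For the orthogonality, I would invoke Lemma \ref{ortho-lemma-0}: taking $T'=\exp\ker\alpha$, the vector $v$ is not fixed by $T'$ in general, so instead one works with $T''=\exp(\ker\alpha\cap\ker\beta)$, a codimension-$\le 2$ subtorus of the Cartan torus that fixes $v\in T_oM$; the $\mathrm{Ad}(T'')$-invariant decomposition of $\mathfrak{m}$ into isotypic summands is then $g_v^F$-orthogonal by Schur's lemma, and $[v,\mathfrak{m}]_{\mathfrak{m}}$ must be shown to avoid the summand containing $v$. The point is that $[v,\mathfrak{g}_{\pm\alpha}]_{\mathfrak{m}}\subset[\mathfrak{g}_{\pm\beta},\mathfrak{g}_{\pm\alpha}]_{\mathfrak{m}}=0$ and likewise $[v,\mathfrak{g}_{\pm\beta}]_{\mathfrak{m}}=0$, while $[v,\mathfrak{g}_{\pm\gamma}]_{\mathfrak{m}}$ for other root planes $\gamma$ lands in root planes $\gamma\pm\alpha,\gamma\pm\beta$, none of which is $\pm\alpha$ or $\pm\beta$ (again using $\alpha\pm\beta\notin\Delta$ and linear independence), so these pieces are $g_v^F$-orthogonal to $v\in\mathfrak{g}_{\pm\alpha}+\mathfrak{g}_{\pm\beta}$.

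Once we know a whole circle's worth of such $v$'s lies in $\mathcal{C}$ — specifically, both $u_\alpha$ and some genuinely mixed $v$ with nonzero $\beta$-component — we derive the contradiction exactly as in Lemma \ref{lemma-classification-even-dim-2}: $\mathcal{C}$ is a single $\mathrm{Ad}(H)$-orbit, yet $u_\alpha$ lies in the weight-$\alpha$ part of the $\mathfrak{t}$-action and $v$ does not lie in any single root plane, so the $\mathfrak{t}$-weight decompositions of $u_\alpha$ and of $\mathrm{Ad}(h)u_\alpha$ for $h\in H$ can never produce a vector with support on both $\pm\alpha$ and $\pm\beta$ unless $\alpha$ and $\beta$ are Weyl-conjugate under $N_H(\mathfrak{t})$ in a way that is incompatible with $\alpha\pm\beta\notin\Delta$; pushing this through, the orbit $\mathrm{Ad}(H)u_\alpha$ cannot contain $v$, contradicting $u_\alpha,v\in\mathcal{C}$. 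I expect the main obstacle to be the bookkeeping in the second step: confirming that the chosen mixed vector $v$ genuinely generates an $\mathrm{S}^1$-subgroup and simultaneously satisfies the $g_v^F$-orthogonality needed for Lemma \ref{lemma-6}, since the isotypic decomposition under $T''$ must be analyzed carefully when $\alpha$ and $\beta$ have different lengths or when intermediate roots $\gamma$ satisfy unexpected linear relations with $\alpha,\beta$. Handling those degenerate root-theoretic configurations — rather than the generic rank-$2$ picture — is where the real work lies.
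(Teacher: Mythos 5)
Your proposal takes a genuinely different route from the paper, but as written it has two real gaps. The central one is the step you yourself defer: the geodesic criterion $\langle v,[v,\mathfrak{m}]_{\mathfrak{m}}\rangle^F_v=0$ for a mixed vector $v=au_\alpha+bu_\beta$ is only secured by your Schur argument when $\pm\alpha,\pm\beta$ are the \emph{only} roots in the plane $P=\mathbb{R}\alpha+\mathbb{R}\beta$. The hypothesis $\alpha\pm\beta\notin\Delta$ does not force this: $P\cap\Delta$ can be a $\mathrm{B}_2$-system with $\alpha,\beta$ the two orthogonal long roots (so $(\alpha\pm\beta)/2$ are roots), or a $\mathrm{G}_2$-system with $\alpha\perp\beta$ of different lengths. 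In those configurations the zero-isotypic summand of $\mathfrak{m}$ under $T''=\exp(\ker\alpha\cap\ker\beta)$ contains further root planes (some of which must lie in $\mathfrak{m}$, since $\mathfrak{h}$ is a subalgebra), $[v,\mathfrak{m}]_{\mathfrak{m}}$ has components back inside that same summand, and Schur's lemma gives nothing; moreover this lemma sits in the even-dimensional case where reversibility is \emph{not} assumed, so the extra orthogonality lemmas of Section 5 (e.g.\ Lemma \ref{ortho-lemma-2-5}) are unavailable. Note also that in such a configuration $\ker\alpha\cap\ker\beta$ can even be trivial inside a rank-two piece, so "codimension $\le 2$" buys you nothing. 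These degenerate cases (your "real work") are exactly the ones that occur in concrete presentations such as $\mathrm{SO}(5)/\mathrm{U}(2)$, so they cannot be waved away; without them the proof is incomplete.

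The second gap is the final contradiction: your argument that $\mathrm{Ad}(H)u_\alpha$ cannot contain a mixed $v$ rests on $\mathfrak{t}$-weight support, but $\mathrm{Ad}(h)$ for a general $h\in H$ (which is much larger than the maximal torus) does not preserve the weight decomposition, so "support on $\pm\alpha$ only" is not an $\mathrm{Ad}(H)$-invariant notion. The correct repair is the one the paper uses elsewhere (Lemma \ref{key-lemma-4}, via Proposition 2.2, Ch.\ 7 of \cite{Hegalson}): $u_\alpha$ and $v$ would be $\mathrm{Ad}(G)$-conjugate, hence their torus representatives Weyl-conjugate, and one chooses $a,b$ (still within the dense set generating circles in the $\mathrm{A}_1\oplus\mathrm{A}_1$ subalgebra) so that the representative of $v$ is not in the finite Weyl orbit of $\alpha/|\alpha|_{\mathrm{bi}}$; you gesture at Weyl-conjugacy but the justification given is wrong and the fix is not supplied. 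For contrast, the paper avoids all of this pointwise analysis: it takes the fixed-point set of $T'=\exp(\ker\alpha\cap\ker\beta)$, identifies it (totally geodesic, by Lemma \ref{preparation-lemma-totally-geodesic}) with a space locally isometric to $\mathrm{S}^2\times\mathrm{S}^2$ as in Example 3, and contradicts Assumption (I) by producing arbitrarily long prime closed geodesics, since geodesics in one $\hat{G}$-orbit share a common length. Your approach could likely be completed in the generic $\mathrm{A}_1\times\mathrm{A}_1$ configuration, but both the degenerate root configurations and the orbit argument need to be done before it counts as a proof.
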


\begin{proof}
Assume conversely that there exist a pair of
roots $\alpha$ and $\beta$ indicated in this lemma.

Inside the maximal torus $T=\exp\mathfrak{t}$,
we have a codimension two sub-torus $T'=\exp \mathfrak{t}'=\exp(\ker\alpha\cap\ker\beta)$.
The totally geodesic submanifold $(\mathrm{Fix}_o(T',M),F|_{\mathrm{Fix}_o(T',M)})$
is finitely covered
by and locally isometric to $M'=\mathrm{S}^2\times \mathrm{S}^2=\mathrm{SO}(3)\times \mathrm{SO}(3)/\mathrm{SO}(2)\times \mathrm{SO}(2)$ with
an $\mathrm{SO}(3)\times \mathrm{SO}(3)$-invariant Finsler metric.

According to Example 3 and Proposition
\ref{prop-finite-cover} in Section \ref{Section-homogeneous-examples}, we can find a sequence of prime closed geodesics
$\gamma_n$
on $(\mathrm{Fix}_o(T',M),F|_{\mathrm{Fix}_o(T',M)})$, as well
as on $(M,F)$, which lengths $l(\gamma_n)$'s diverge to infinity.
They can not belong to the same orbit, which is a contradiction
to Assumption (I).

This ends the proof of this lemma.
\end{proof}

Now we are ready to prove Theorem \ref{classification-thm-1}.

{\bf Proof of Theorem \ref{classification-thm-1}.}
Let $(M,F)=(G/H,F)$ with $G=I_0(M,F)$
be an even dimensional compact connected homogeneous space satisfying Assumption (I). Denote $H_0$ the identity component of the isotropy subgroup $H$.

By Lemma \ref{lemma-classification-even-3} and the algebraic discussion in Section 6 of \cite{Wa1972},
$(\mathfrak{g},\mathfrak{h})$ belongs to the Wallach's list, i.e. it is one of the following,
\begin{description}
\item{\rm (1)} $(\mathrm{B}_n,\mathrm{D}_n)$, $(\mathrm{A}_n,\mathbb{R}\oplus \mathrm{A}_{n-1})$,
$(\mathrm{C}_n,\mathrm{A}_1\oplus \mathrm{C}_{n-1})$, $(\mathrm{F}_4,\mathrm{B}_4)$.
\item{\rm (2)} $(\mathrm{A}_2,\mathbb{R}\oplus\mathbb{R})$, $(\mathrm{C}_3,\mathrm{A}_1\oplus \mathrm{A}_1\oplus \mathrm{A}_1)$, $(\mathrm{F}_4,\mathrm{D}_4)$.
\item{\rm (3)} $(\mathrm{G}_2,\mathrm{A}_2)$.
\item{\rm (4)} $(\mathrm{C}_n,\mathbb{R}\oplus \mathrm{C}_{n-1})$.
\end{description}

When the pair $(\mathfrak{g},\mathfrak{h})$ belongs to (1) or (3), $(G/H,F)$ is locally isometric to one of the compact rank-one Riemannian symmetric
spaces. The theorem follows Proposition \ref{prop-classification-local} immediately.

When $(\mathfrak{g},\mathfrak{h})$
belongs to (4), we have a unique $\mathrm{Ad}(H_0)$-invariant decomposition
$\mathfrak{m}=\mathfrak{m}_1+\mathfrak{m_2}$
where $\mathfrak{m}_i$'s are irreducible representations of $H_0$
with $\dim\mathfrak{m}_1=2$ and $\dim\mathfrak{m}_2=4$. Here $H_0$ is the identity component of $H$, which is a normal subgroup of $H$ as well.

For each $g\in H$, the $\mathrm{Ad}(g)$-action permutes the two $\mathfrak{m}_i$-factors in $\mathfrak{m}$.
Because the two factors have different dimensions, we must
have $\mathrm{Ad}(g)\mathfrak{m}_i=\mathfrak{m}_i$ for each $i\in\{1,2\}$ and each $g\in H$.
So the isotropy representation for $G/H$ is not irreducible,
which contradicts Lemma \ref{lemma-classification-even-dim-2}.

Finally, we consider the case that
$(\mathfrak{g},\mathfrak{h})$ belongs to (2).

In this case, $(M,F)$ is finitely
covered by $G/H_0=\mathrm{SU}(3)/T^2$, $\mathrm{Sp}(3)/\mathrm{Sp}(1)^3$ or $\mathrm{F}_4/\mathrm{Spin}(8)$. We have a unique
decomposition $\mathfrak{m}=\mathfrak{m}_1+\mathfrak{m}_2+\mathfrak{m}_3$,
in which all three $\mathfrak{m}_i$-factors are different irreducible representations of $H_0$ with the same dimension.

From $g\in H$ to the permutation action of $\mathrm{Ad}(g)$ on the
three $\mathfrak{m}_i$-factors defines an injection from $H/H_0$
to $S_3$, the permutation group of three elements. So the fundamental group $\pi_1(M)$ is an subgroup of $S_6$.

If $\pi_1(M)=\{e\}$ or $\mathbb{Z}_2$, the isotropy representation
is not irreducible, which contradicts Lemma \ref{lemma-classification-even-dim-2}. If $\pi_1(M)=S_3$, the prime closed geodesic representing a homotopy class of order 2 and another representing one of order 3 can not belong to the same orbit, which contradicts Assumption (I).
To summarize, we must have $H/H_0=\pi_1(\pi)=\mathbb{Z}_3$.

When $G/H_0=\mathrm{SU}(3)/T^2$, we can find the matrix
\begin{equation}\label{006}
g=\left(
      \begin{array}{ccc}
        0 & 1 & 0 \\
        0 & 0 & 1 \\
        1 & 0 & 0 \\
      \end{array}
    \right)\in \mathrm{SU}(3)
\end{equation}
in $H$. The reason is following. For any element $g'$ from
a suitable connected component of $H$, $\mathrm{Ad}(gg')$ preserves each $\mathfrak{m}_i$. Direct calculation shows that $gg'$ is a diagonal matrix, i.e. it is contained
in $H_0$. So we have $g\in H_0 g'=g'H_0\subset H$.

The centralizer $\mathfrak{c}(g)=\{u\in\mathfrak{g}\mbox{ with }\mathrm{Ad}(g)u=u\}$ for $g$ in (\ref{006}) is an Abelian subalgebra
spanned by
\begin{equation}\label{007}
u=\left(
    \begin{array}{ccc}
      0 & 1 & -1 \\
      -1 & 0 & 1 \\
      1 & -1 & 0 \\
    \end{array}
  \right)\quad\mbox{and}\quad
v=\sqrt{-1}\left(
  \begin{array}{ccc}
    0 & 1 & 1 \\
    1 & 0 & 1 \\
    1 & 1 & 0 \\
  \end{array}
\right)
\end{equation}
in $\mathfrak{m}$, and $\mathfrak{c}(g)\cap\mathfrak{h}=0$.
By the fixed pointed set technique in \cite{XZ-2017} (see Section 2)
$\mathrm{Fix}_o(g,M)$  is a flat torus. By Example 2 in Section \ref{Section-homogeneous-examples}, this is a contradiction to
Assumption (I).

When $G/H_0=\mathrm{Sp}(3)/\mathrm{Sp}(1)^3$, we can similarly argument that the matrix $g$
in (\ref{006}) is contained in $H$.
Then its centralizer $\mathfrak{c}(g)$ in $\mathfrak{g}=\mathrm{sp}(3)$ can be decomposed as a direct sum of ideals,
 $$\mathfrak{c}(g)=\mathfrak{g}_1\oplus\mathfrak{g}_2
 \oplus\mathbb{R}u,$$
where
$u\in\mathfrak{m}$ is the same as in (\ref{007}), and for each $i=1$ and 2,  $\mathfrak{g}_i=\mathrm{Im}\mathbb{H}v_i$ is a subalgebra of type $\mathrm{A}_1$, where
\begin{equation}
 v_1=\left(
  \begin{array}{ccc}
    1 & 1 & 1 \\
    1 & 1 & 1 \\
    1 & 1 & 1 \\
  \end{array}\right)
\quad \mbox{and} \quad
v_2=\left(
      \begin{array}{ccc}
        -2 & 1 & 1 \\
        1 & -2 & 1 \\
        1 & 1 & -2 \\
      \end{array}
    \right).
\end{equation}
Notice $\mathfrak{c}(g)\cap\mathfrak{h}=\mathrm{Im}\mathbb{H}I$ is also a subalgebra
of type $\mathrm{A}_1$, diagonally imbedded in $\mathfrak{g}_1\oplus\mathfrak{g}_2$.
So
 $(\mathrm{Fix}_o(g,M),F|_{\mathrm{Fix}_o(g,M)})$ is locally isometric to and finitely covered by $(M',F')$ where
$M'=\mathrm{S}^1\times \mathrm{S}^3=\mathrm{SO}(2)\times \mathrm{SO}(4)/e\times \mathrm{SO}(3)$ and $F'$ is $\mathrm{SO}(2)\times \mathrm{SO}(4)$-invariant.
By Example 3 in Section \ref{Section-homogeneous-examples} and Proposition \ref{prop-finite-cover}, this is a contradiction
to Assumption (I).

When $G/H_0=\mathrm{F}_4/\mathrm{Spin}(8)$, we view $\mathrm{F}_4$ as the automorphism group
of the exceptional Jordan algebra $\mathfrak{h}_3(\mathbb{O})$, the space of all Hermitian $3\times3$ Octonian matrices with the product $\circ$ defined by $A\circ B=\frac12(AB+BA)$.
Then $H_0=\mathrm{Spin}(8)$ can be identified as the subgroup fixing
$X_1=\mathrm{diag}(1,0,0)$, $X_2=\mathrm{diag}(0,1,0)$ and
$X_3=\mathrm{diag}(0,0,1)$. See Chapter 16 in \cite{Ad1996}
for more details.

The automorphism $g$ of $\mathfrak{h}_3(\mathbb{O})$
given by the conjugation by the matrix in (\ref{007})
is an element in $\mathrm{F}_4$ of order 3, such that $\mathrm{Ad}(g)$ induces an
outer automorphism of $\mathfrak{h}$. Because $H/H_0=\mathbb{Z}_3\subset S_3=\mathrm{Out}(\mathfrak{h})$, we see that
$g$ is contained in $H$.

For $g\in H$ described above,
its centralizer $\mathfrak{c}(g)$ in $\mathfrak{g}$ is a $22$-dimensional subalgebra of rank 4, and $\mathfrak{c}\cap\mathfrak{h}=\mathrm{G}_2$. By the fixed point set technique and Proposition \ref{prop-rank-bigger-than-one-more},
there exist arbitrarily long prime closed geodesic on
$(\mathrm{Fix}_o(g,M),F|_{\mathrm{Fix}_o(g,M)})$ as well as on
$(M,F)$. This is a contradiction to Assumption (I).

In fact, Proposition \ref{prop-rank-bigger-than-one-more} can
also be applied to the cases that $G/H_0=\mathrm{SU}(3)/T^2$ and
$\mathrm{Sp}(3)/\mathrm{Sp}(1)^3$. If we want to explicitly describe $\mathrm{Fix}_o(g,M)$ when $(\mathfrak{g},\mathfrak{h})=
(\mathrm{F}_4,\mathrm{D}_4)$,
we will find that
$\mathrm{Fix}_o(g,M)$ is finitely covered by and locally isometric
to the homogeneous Finsler space
$(M',F')=(S^1\times S^7,F')=(\mathrm{SO}(2)\times \mathrm{Spin}(7)/e\times \mathrm{G}_2,F')$.
The metric $F'$ is in fact $\mathrm{SO}(2)\times \mathrm{SO}(8)$-invariant, so
we can alternatively get the contradiction by Example 3 and
Proposition \ref{prop-finite-cover} in Section \ref{Section-homogeneous-examples}.

To summarize, by case by case discussion, we have proved Theorem
\ref{classification-thm-1}.

\section{Classification theorem and key lemmas when $\dim M$ is odd}

In the following two sections, we will study the odd dimensional
case and prove the following theorem.

\begin{theorem}\label{classification-thm-2}
Any odd dimensional compact connected reversible homogeneous
Finsler space with only one orbit of prime closed geodesics
must be a Riemannian symmetric $\mathrm{S}^n$ or a Riemannian symmetric $\mathbb{R}\mathrm{P}^n$.
\end{theorem}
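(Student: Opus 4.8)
The plan is to follow the blueprint already established for the even-dimensional case (Theorem \ref{classification-thm-1}), but working with the finer $T_H$-decomposition from Case 2 of Section \ref{Section-algebraic-setup-ortho} together with the reversibility hypothesis. First I would set up $(M,F)=(G/H,F)$ with $G=I_0(M,F)$, fix a fundamental Cartan subalgebra $\mathfrak{t}$ so that $\mathrm{rk}\mathfrak{g}=\mathrm{rk}\mathfrak{h}+1$ and $\dim(\mathfrak{t}\cap\mathfrak{m})=1$ (Theorem \ref{thm-rank-inequality}), and write the $\mathrm{Ad}(T_H)$-invariant decomposition $\mathfrak{m}=\sum_{\alpha'}\hat{\mathfrak{m}}_{\pm\alpha'}$ from (\ref{T-H-decomp-m}). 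The central object is again $\mathcal{C}$, the $\mathrm{Ad}(H)$-orbit of unit vectors whose geodesic through $o$ is closed. The first technical step is an odd-dimensional analogue of Lemma \ref{lemma-classification-even-dim-1}: using Lemma \ref{lemma-6}, the orthogonality Lemmas \ref{ortho-lemma-1}, \ref{ortho-lemma-1-5}, \ref{ortho-lemma-2}, \ref{ortho-lemma-2-5}, \ref{ortho-lemma-2-6}, and the fact that suitable vectors in $\hat{\mathfrak{m}}_0$ or in a root plane $\mathfrak{g}_{\pm\alpha}\subset\hat{\mathfrak{m}}_{\pm\alpha'}$ generate $\mathrm{S}^1$-subgroups, I would show such vectors lie in $\mathcal{C}$. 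This forces strong constraints: since $\mathcal{C}$ is a single $\mathrm{Ad}(H)$-orbit, it cannot meet two distinct $\mathrm{Ad}(H)$-invariant pieces of $\mathfrak{m}$, so as in Lemma \ref{lemma-classification-even-dim-2} the isotropy representation must be as small as possible — ideally irreducible, or at least with the one-dimensional summand $\mathfrak{t}\cap\mathfrak{m}$ behaving compatibly with $\mathcal{C}$.

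Next I would import the forbidden-configuration arguments. The analogue of Lemma \ref{lemma-classification-even-3} should say: there is no pair of linearly independent roots $\alpha,\beta$ with $\mathfrak{g}_{\pm\alpha},\mathfrak{g}_{\pm\beta}\subset\mathfrak{m}$ and $\alpha\pm\beta\notin\Delta$, because the fixed-point set of the codimension-two subtorus $\exp(\ker\alpha\cap\ker\beta)$ would be (finitely covered by and locally isometric to) a space like $\mathrm{S}^2\times\mathrm{S}^2$, $\mathrm{S}^1\times\mathrm{S}^3$, or a rank-$>1$ Lie group with a left-invariant metric, all ruled out by Examples 2--5 and Propositions \ref{prop-finite-cover} and \ref{prop-rank-bigger-than-one-more}. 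Likewise, the rank inequality itself, via Proposition \ref{prop-rank-bigger-than-one-more}, kills all $(\mathfrak{g},\mathfrak{h})$ with $\mathrm{rk}\mathfrak{g}>\mathrm{rk}\mathfrak{h}+1$, and in the odd case we have exactly $\mathrm{rk}\mathfrak{g}=\mathrm{rk}\mathfrak{h}+1$. Combining the root-configuration restriction with $\mathrm{rk}\mathfrak{g}=\mathrm{rk}\mathfrak{h}+1$, $\dim(\mathfrak{t}\cap\mathfrak{m})=1$, and near-irreducibility of the isotropy representation, I would run a case-by-case classification over compact Lie algebra pairs — again paralleling the positively-curved homogeneous classification, cf. \cite{XD-towards-odd}\cite{XZ-2017} — to narrow $(\mathfrak{g},\mathfrak{h})$ down to a short list. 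I expect this list to be essentially $(\mathrm{B}_n,\mathrm{D}_n)$, i.e. $\mathrm{SO}(n+1)/\mathrm{SO}(n)$, together with a few sporadic transitive actions on odd spheres ($\mathrm{SU}(m)/\mathrm{SU}(m-1)$, $\mathrm{Sp}(m)/\mathrm{Sp}(m-1)$, $\mathrm{Spin}(7)/\mathrm{G}_2$, $\mathrm{Spin}(9)/\mathrm{Spin}(7)$, $\mathrm{G}_2/\mathrm{SU}(3)$, etc.), which by the Montgomery--Samelson / Borel classification of transitive sphere actions all give $M=\mathrm{S}^n$ (or $\mathbb{R}\mathrm{P}^n$ after quotient).

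The final step is the metric rigidity: having pinned down that $M$ is an odd-dimensional sphere or real projective space with a transitive $G$-action, I must show the reversible invariant Finsler metric $F$ is forced to be Riemannian (and symmetric). Here reversibility is essential and is used through Lemmas \ref{ortho-lemma-2-5} and \ref{ortho-lemma-2-6}: these give enough $g_u^F$-orthogonality relations that, combined with $u\in\mathcal{C}$ for all relevant root-plane vectors (so all these directions have closed geodesics of equal length, by Assumption (I)), the Minkowski norm on $\mathfrak{m}=T_oM$ must be $\mathrm{Ad}(H)$-invariantly round — since on a sphere the isotropy group $H$ acts transitively on the unit sphere of $\mathfrak{m}$ (for the Riemannian round metric), any $\mathrm{Ad}(H)$-invariant Minkowski norm is a scalar multiple of it, hence Riemannian symmetric. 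The non-simply-connected quotients are then handled by Proposition \ref{prop-classification-local}. I expect the main obstacle to be precisely the odd-dimensional root-configuration lemma and the ensuing Lie-algebra case analysis: unlike the even case, the presence of the extra rank and the one-dimensional summand $\mathfrak{t}\cap\mathfrak{m}$ (with its possible root $\alpha$, making $\dim\hat{\mathfrak{m}}_0=1$ or $3$) means more configurations survive the first cut, and ruling out the ones that are not spheres — especially disentangling which left-invariant-metric or product situations arise inside fixed-point sets — will require the careful bookkeeping of Section 7 and Section 8, together with repeated appeals to the orthogonality lemmas to show that only the "round" possibility is consistent with $\mathcal{C}$ being a single orbit.
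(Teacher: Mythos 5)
Your overall philosophy (the set $\mathcal{C}$, the orthogonality lemmas, fixed-point sets, a case-by-case run over compact pairs $(\mathfrak{g},\mathfrak{h})$ with $\mathrm{rk}\,\mathfrak{g}=\mathrm{rk}\,\mathfrak{h}+1$) matches the paper, but two of your load-bearing steps do not survive in the odd case. First, the final ``metric rigidity'' step is wrong as stated: you argue that once $M$ is known to be a sphere, $H$ acts transitively on the unit sphere of $\mathfrak{m}$, so any $\mathrm{Ad}(H)$-invariant Minkowski norm is round. That is true only for the presentations $\mathrm{SO}(n)/\mathrm{SO}(n-1)$, $\mathrm{Spin}(7)/\mathrm{G}_2$, $\mathrm{G}_2/\mathrm{SU}(3)$; for the other transitive sphere actions on your own list ($\mathrm{SU}(m)/\mathrm{SU}(m-1)$, $\mathrm{Sp}(m)/\mathrm{Sp}(m-1)$, $\mathrm{Sp}(m)\mathrm{Sp}(1)/\mathrm{Sp}(m-1)\mathrm{Sp}(1)$, $\mathrm{Spin}(9)/\mathrm{Spin}(7)$) the isotropy action is not transitive on directions and these spheres carry many non-Riemannian invariant reversible metrics. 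The paper never forces roundness on these; it excludes them outright by showing Assumption (I) fails: the regular-subalgebra situation (Case I, Proposition \ref{prop-Case-I}, which covers $\mathrm{Sp}(m)/\mathrm{Sp}(m-1)$) is killed by a deck-transformation/fixed-point-set argument producing a totally geodesic $M'$ with $T_oM'\cap\mathcal{C}=\emptyset$, while $\mathrm{Spin}(9)/\mathrm{Spin}(7)$ falls to the root-length/Weyl-orbit obstruction of Lemma \ref{key-lemma-4} combined with Lemma \ref{key-lemma-3} (Case III-B), and $\mathrm{Sp}(m)\mathrm{Sp}(1)$ to the Case II analysis; see also Proposition \ref{prop-final}. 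Without some replacement for these exclusion mechanisms your argument stalls exactly where a non-round invariant metric on, say, $\mathrm{Sp}(m)/\mathrm{Sp}(m-1)$ would have to be ruled out.

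Second, the structural lemmas you want to import from the even case do not transfer. The irreducibility argument of Lemma \ref{lemma-classification-even-dim-2} genuinely fails in odd dimensions ($\mathrm{Sp}(m)/\mathrm{Sp}(m-1)$ has reducible isotropy representation yet is a sphere), and in the odd case not every root plane in $\mathfrak{m}$ yields $\mathcal{C}$-vectors: Lemma \ref{key-lemma-3} needs both reversibility and the condition that $\alpha$ is the only root in $(2\mathbb{N}-1)\alpha+\mathfrak{t}\cap\mathfrak{m}$, so a blanket ``all root-plane directions are closed'' is unavailable. Likewise your proposed analogue of Lemma \ref{lemma-classification-even-3} uses the subtorus $\exp(\ker\alpha\cap\ker\beta)$, but in the odd case the Cartan subalgebra is not contained in $\mathfrak{h}$, so this torus need not lie in $H$ and the fixed-point-set technique does not apply directly; the paper instead works with subtori of $T_H\subset H$ (Lemma \ref{key-lemma-2}, with model spaces $\mathrm{S}^1\times\mathrm{S}^2$ and $\mathrm{S}^3\times\mathrm{S}^2$), and organizes the whole classification by whether the root planes of $\mathfrak{h}$ are root planes of $\mathfrak{g}$ (Cases I--III), a trichotomy absent from your outline. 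These are the concrete missing ideas; the rest of your plan (rank equality, $\hat{\mathfrak{m}}_0$, reversibility entering through Lemma \ref{ortho-lemma-2-5}, reduction of non-simply-connected quotients via Proposition \ref{prop-classification-local}) is consistent with the paper.
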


We keep
all relevant notations and assumptions in Case 2, Section \ref{Section-algebraic-setup-ortho}. Using the bi-invariant inner product on $\mathfrak{g}$ and its restriction to $\mathfrak{h}$, we identify roots of $\mathfrak{g}$ and $\mathfrak{h}$ as vectors
in $\mathfrak{t}$ and $\mathfrak{t}\cap\mathfrak{h}$ respectively.

We still denote $\mathcal{C}$
the subset of all vectors $u\in\mathfrak{m}$ such that
$|u|_\mathrm{bi}=1$ and the geodesic passing $o$ in the direction of $u$ is closed. When Assumption (I) is satisfied, $\mathcal{C}$ is an $\mathrm{Ad}(H)$-orbit.

For preparation, we need the following lemmas.

\begin{lemma}\label{key-lemma-1}
Assume $(G/H,F)$ is an odd dimensional compact connected homogeneous Finsler space satisfying $\mathrm{rk}G=\mathrm{rk}H+1$,
and keep all relevant notations and assumptions. Then we can find
two different vectors $u_1$ and $u_2$ in
$\hat{\mathfrak{m}}_{0}\cap\mathcal{C}$.
\end{lemma}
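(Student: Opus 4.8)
The plan is to exhibit two vectors in $\hat{\mathfrak{m}}_0 \cap \mathcal{C}$ by case analysis on the dimension of $\hat{\mathfrak{m}}_0$. Recall from Section 5 that $\hat{\mathfrak{m}}_0 = \mathfrak{c}_{\mathfrak{m}}(\mathfrak{h})$ is the centralizer of $\mathfrak{h}$ in $\mathfrak{m}$, and that either $\hat{\mathfrak{m}}_0 = \mathfrak{t}\cap\mathfrak{m}$ (when no root of $\mathfrak{g}$ lies in $\mathfrak{t}\cap\mathfrak{m}$) or $\hat{\mathfrak{m}}_0 = (\mathfrak{t}\cap\mathfrak{m}) + \mathfrak{g}_{\pm\alpha}$ for a root $\alpha \in \mathfrak{t}\cap\mathfrak{m}$. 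Since $\dim(\mathfrak{t}\cap\mathfrak{m}) = 1$, the first case gives $\dim\hat{\mathfrak{m}}_0 = 1$ and the second gives $\dim\hat{\mathfrak{m}}_0 = 3$. So there are exactly two cases to treat.

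First I would dispose of the case $\dim\hat{\mathfrak{m}}_0 = 1$, i.e. $\hat{\mathfrak{m}}_0 = \mathfrak{t}\cap\mathfrak{m}$. Here the natural candidate vectors are the two unit vectors $\pm w$ spanning $\mathfrak{t}\cap\mathfrak{m}$. Since $\mathfrak{t}$ is a Cartan subalgebra and $w \in \mathfrak{t}$, the bracket $[w, \mathfrak{m}]$ contains no component of $w$ itself — more precisely, writing $\mathfrak{m}$ as a sum of $\mathfrak{t}\cap\mathfrak{m}$ and root planes, $[w, \mathfrak{t}\cap\mathfrak{m}] = 0$ and $[w, \mathfrak{g}_{\pm\beta}] \subset \mathfrak{g}_{\pm\beta}$, so the $\mathfrak{t}\cap\mathfrak{m}$-component of $[w,\mathfrak{m}]_{\mathfrak{m}}$ vanishes. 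Using Lemma \ref{ortho-lemma-1} (the decomposition (\ref{T-H-decomp-m}) is $g_w^F$-orthogonal, and $\mathfrak{t}\cap\mathfrak{m} \subset \hat{\mathfrak{m}}_0$ is one summand), we get $\langle w, [w,\mathfrak{m}]_{\mathfrak{m}}\rangle^F_w = 0$, so condition (\ref{equ-eta-0}) of Lemma \ref{lemma-6} holds. After perturbing $w$ within $\mathfrak{t}$ to a nearby rational direction generating an $\mathrm{S}^1$-subgroup of $G$ — or observing directly that one can choose the Cartan subalgebra so that $w$ itself is such a vector, since the set of $\mathrm{S}^1$-generators is dense (Lemma \ref{preparation-lemma-dense-rational-lie-vector}) and the condition is robust — we conclude $w, -w \in \hat{\mathfrak{m}}_0 \cap \mathcal{C}$, giving the two required distinct vectors $u_1 = w$, $u_2 = -w$.

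Next, for the case $\dim\hat{\mathfrak{m}}_0 = 3$, the candidates are the vectors $u_1$ and $u_2$ constructed just before Lemma \ref{ortho-lemma-1-5} as the $F$-minimizer and $F$-maximizer among unit vectors of $\hat{\mathfrak{m}}_0$ (these are distinct because $F$ restricted to the unit $|\cdot|_{\mathrm{bi}}$-sphere of $\hat{\mathfrak{m}}_0$ cannot be constant unless that restriction is Euclidean, and even then one can take two antipodal extremizers; in any case one picks two genuinely different ones). By Lemma \ref{ortho-lemma-1-5}, $\langle u_i, [u_i, \hat{\mathfrak{m}}_0]\rangle^F_{u_i} = 0$ for $i = 1, 2$. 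Combining this with Lemma \ref{ortho-lemma-1}, which says the whole decomposition (\ref{T-H-decomp-m}) is $g_{u_i}^F$-orthogonal and in particular $\hat{\mathfrak{m}}_0$ is $g_{u_i}^F$-orthogonal to $\sum_{\alpha'\neq 0}\hat{\mathfrak{m}}_{\pm\alpha'}$, we obtain $\langle u_i, [u_i,\mathfrak{m}]_{\mathfrak{m}}\rangle^F_{u_i} = 0$: indeed $[u_i,\mathfrak{m}]_{\mathfrak{m}}$ splits according to (\ref{T-H-decomp-m}), its $\hat{\mathfrak{m}}_0$-part is handled by Lemma \ref{ortho-lemma-1-5} and its other parts are $g_{u_i}^F$-orthogonal to $u_i \in \hat{\mathfrak{m}}_0$. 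Since $\hat{\mathfrak{m}}_0 = (\mathfrak{t}\cap\mathfrak{m}) + \mathfrak{g}_{\pm\alpha}$ is a subalgebra of type $\mathbb{R}\oplus\mathrm{A}_1$ or $\mathrm{A}_1$ — more carefully, $\mathfrak{c}_{\mathfrak{g}}(\mathfrak{t}\cap\mathfrak{h})$ is reductive of rank $\le \mathrm{rk}\,\mathfrak{g} - \dim(\mathfrak{t}\cap\mathfrak{h}) + 1$ with a one-dimensional-modulo-center structure forcing an $\mathrm{SU}(2)$ or $\mathrm{U}(2)$ type factor — the generic vector of $\hat{\mathfrak{m}}_0$ generates an $\mathrm{S}^1$-subgroup, and again a density/robustness argument (or a direct check that the extremizers $u_i$ can be taken rational, using that the extremizing set is a union of lower-dimensional orbits) lets us assume $u_1, u_2$ generate $\mathrm{S}^1$-subgroups. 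Lemma \ref{lemma-6} then gives $u_1, u_2 \in \hat{\mathfrak{m}}_0 \cap \mathcal{C}$.

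The main obstacle I anticipate is the rationality issue: the orthogonality lemmas deliver vectors satisfying the geodesic equation (\ref{equ-eta-0}), but Lemma \ref{lemma-6} only upgrades these to closed geodesics — hence to members of $\mathcal{C}$ — when the vector generates an $\mathrm{S}^1$-subgroup, whereas the $F$-extremizers $u_1, u_2$ in the three-dimensional case are not a priori rational. I expect the resolution to be that in $\hat{\mathfrak{m}}_0$, which is two- or three-dimensional of a very restricted algebraic type, the $F$-sphere's extremizers form either a finite set or a union of circles on which one can always locate an $\mathrm{S}^1$-generator; alternatively, one notes that membership in $\mathcal{C}$ is a closed condition and that $\mathcal{C}$ is an $\mathrm{Ad}(H)$-orbit, so it suffices to produce vectors in the closure, which the density of $\mathrm{S}^1$-generators supplies. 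A secondary, milder point is confirming $u_1 \ne u_2$ in degenerate situations (e.g. when $F|_{\hat{\mathfrak{m}}_0}$ is a multiple of $|\cdot|_{\mathrm{bi}}$), which is handled by taking two distinct unit vectors in a sphere of dimension $\ge 1$.
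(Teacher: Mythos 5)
Your skeleton matches the paper's proof: the same dichotomy $\dim\hat{\mathfrak{m}}_0=1$ or $3$, the same candidate vectors ($\pm w$ spanning $\mathfrak{t}\cap\mathfrak{m}$, resp.\ the $F$-extremizers of Lemma \ref{ortho-lemma-1-5}), and the same use of Lemma \ref{ortho-lemma-1} (plus Lemma \ref{ortho-lemma-1-5}) to verify (\ref{equ-eta-0}) and of Lemma \ref{lemma-6} to conclude. The genuine gap is exactly at the point you flagged, the upgrade to \emph{closed} geodesics, and your proposed resolutions do not work as stated. The orthogonality identities are established only for the specific vectors $\pm w$ and $u_1,u_2$, so you cannot ``perturb to a nearby rational direction'': a nearby circle-generating vector of $\mathfrak{t}$ is in general not in $\mathfrak{m}$ at all, so Lemma \ref{lemma-6} does not apply to it (you would need the general criterion $\langle u,[u+u',\mathfrak{m}]_\mathfrak{m}\rangle^F_u=0$ with $u'\in\mathfrak{h}$, which you never invoke), and a nearby vector of $\hat{\mathfrak{m}}_0$ need not satisfy (\ref{equ-eta-0}). ``Choosing the Cartan subalgebra so that $w$ is rational'' is not available either: fundamental Cartan subalgebras are conjugate under $H$, so whether $w$ generates a circle is not a matter of choice. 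Finally, the fallback ``$\mathcal{C}$ is closed, so it suffices to hit its closure'' fails because density of $\mathrm{S}^1$-generators in $\mathfrak{g}$ (Lemma \ref{preparation-lemma-dense-rational-lie-vector}) does not produce elements of $\mathcal{C}$ (unit vectors of $\mathfrak{m}$ whose geodesics are closed) converging to $w$ or to $u_i$; in the one-dimensional case there is literally no room in $\hat{\mathfrak{m}}_0$ to approximate.

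The paper closes this step differently and without genericity. When $\dim\hat{\mathfrak{m}}_0=3$, one has $\hat{\mathfrak{m}}_0=\mathbb{R}\alpha+\mathfrak{g}_{\pm\alpha}$, a compact subalgebra of type $\mathrm{A}_1$; every nonzero vector of such a subalgebra generates an $\mathrm{S}^1$-subgroup of $G$ (its one-parameter subgroup is the image of a circle in $\mathrm{SU}(2)$), so the specific extremizers $u_1,u_2$ are automatically admissible -- your hedge about a ``$\mathbb{R}\oplus\mathrm{A}_1$ or $\mathrm{U}(2)$'' possibility and a generic/density argument is both unnecessary and insufficient. When $\dim\hat{\mathfrak{m}}_0=1$, the paper argues that $u_1\in\mathfrak{t}\cap\mathfrak{m}$ itself generates an $\mathrm{S}^1$-subgroup, using that $\mathfrak{t}\cap\mathfrak{m}$ is the bi-invariant orthogonal complement in $\mathfrak{t}$ of $\mathfrak{t}\cap\mathfrak{h}=\mathrm{Lie}(T_H)$ with $T_H$ a closed torus of codimension one in $T$. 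If you prefer your perturbation idea, it can be repaired, but only by passing to the general homogeneous-geodesic criterion: pick $u'\in\mathfrak{t}\cap\mathfrak{h}$ with $w+u'$ rational (possible since $\mathfrak{t}/(\mathfrak{t}\cap\mathfrak{h})$ is one-dimensional), note $\mathrm{pr}_\mathfrak{m}(w+u')=w$ and that $\langle w,[w+u',\mathfrak{m}]_\mathfrak{m}\rangle^F_w=0$ again by Lemma \ref{ortho-lemma-1}, so the $X^{w+u'}$-orbit is a closed geodesic with initial direction $w$; as written, your proof does not contain this argument.
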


\begin{proof}
There are two cases to be considered, $\dim\hat{\mathfrak{m}}_0=1$
or $\dim\hat{\mathfrak{m}}_0=3$.

Assume $\dim\hat{\mathfrak{m}}_0=1$, i.e. $\hat{\mathfrak{m}}_0=\mathfrak{t}\cap\mathfrak{m}$.
We choose $u_1\in\mathfrak{t}\cap\mathfrak{m}$ such that
$|u_1|_\mathrm{bi}=1$. By Lemma \ref{ortho-lemma-1},
\begin{eqnarray*}
\langle u_1,[u_1,\mathfrak{m}]_\mathfrak{m}\rangle^F_{u_1}
\subset\langle \hat{\mathfrak{m}}_0,
\sum_{\alpha'\neq0}\hat{\mathfrak{m}}_{\pm\alpha'}\rangle^F_u=0.
\end{eqnarray*}
On the other hand, $\mathfrak{t}\cap\mathfrak{m}$ is bi-invariant
orthogonal to $\mathfrak{t}\cap\mathfrak{h}$ which generates the
torus $T_H$. So $u\in\mathfrak{t}\cap\mathfrak{m}$ generates
an $\mathrm{S}^1$-subgroup. By Lemma \ref{lemma-6}, we get $u_1\in\mathcal{C}$.

By the same argument, we can also get $u_2=-u_1\in\mathcal{C}$.

This proves the case when $\dim\hat{\mathfrak{m}}_0=1$.

Assume $\dim\hat{\mathfrak{m}}_0=3$. By Lemma \ref{ortho-lemma-1-5}, we can find two different vectors $u_1$
and $u_2$ in $\hat{\mathfrak{m}}_0$, such that
$|u_1|_\mathrm{bi}=|u_2|_\mathrm{bi}=1$, and
\begin{equation}\label{005}
\langle u_i,[u_i,\hat{\mathfrak{m}}_0]\rangle^F_{u_i}=0.
\end{equation}
For each $i$,
\begin{eqnarray*}
[u_i,\mathfrak{m}]_\mathfrak{m}&=&
[u_i,\hat{\mathfrak{m}}_0]_\mathfrak{m}
+[u_i,\sum_{\alpha'\in\mathfrak{t}\cap\mathfrak{h},\alpha'\neq0}
\hat{\mathfrak{m}}_{\pm\alpha'}]_\mathfrak{m}\\
&\subset&[u_i,\hat{\mathfrak{m}}_0]_\mathfrak{m}
+\sum_{\alpha'\in\mathfrak{t}\cap\mathfrak{h},\alpha'\neq0}
\hat{\mathfrak{m}}_{\pm\alpha'}.
\end{eqnarray*}

By Lemma \ref{ortho-lemma-1} and (\ref{005}),
we have
\begin{eqnarray*}
\langle u_i,[u_i,\mathfrak{m}]_\mathfrak{m}\rangle^F_{u_i}
\subset\langle u_i,[u_i,\hat{\mathfrak{m}}_0]\rangle^F_{u_i}
+\langle u_i,
\sum_{\alpha'\in\mathfrak{t}\cap\mathfrak{h},\alpha'\neq0}
\hat{\mathfrak{m}}_{\pm\alpha'}\rangle^F_{u_i}
=0.
\end{eqnarray*}

On the other hand, $u_1$ and $u_2$ are nonzero vectors in a compact subalgebra of type $\mathrm{A}_1$, so they generate $\mathrm{S}^1$-subgroups. By Lemma \ref{lemma-6}, we have $u_1$ and
$u_2$ are contained in $\mathcal{C}$.

This proves the lemma when $\dim\hat{\mathfrak{m}}_0=3$.
\end{proof}

\begin{lemma}\label{key-lemma-2}
Assume $(G/H,F)$ is an odd dimensional compact connected homogeneous Finsler space
satisfying Assumption (I)
and keep all relevant notations and assumptions. Let $\alpha$ be
a root of $\mathfrak{g}$ such that
$\alpha\in\mathfrak{t}\cap\mathfrak{h}$,
 and $\alpha$ is the only root of $\mathfrak{g}$ contained in $\mathbb{R}_{>0}\alpha+\mathfrak{t}\cap\mathfrak{m}$.
Then $\alpha$ is a root of $\mathfrak{h}$ and
$\mathfrak{h}_{\pm\alpha}=\mathfrak{g}_{\pm\alpha}
=\hat{\mathfrak{g}}_{\pm\alpha}$.
\end{lemma}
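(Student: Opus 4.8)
The plan is to separate the routine identity $\mathfrak{g}_{\pm\alpha}=\hat{\mathfrak{g}}_{\pm\alpha}$ from the real content, namely $\mathfrak{g}_{\pm\alpha}\subset\mathfrak{h}$, and to prove the latter by ruling out the only alternative, $\mathfrak{g}_{\pm\alpha}\subset\mathfrak{m}$, through the fixed point set technique and the examples of Section \ref{Section-homogeneous-examples}. First, since $\hat{\mathfrak{g}}_{\pm\alpha}=\sum_{\mathrm{pr}_\mathfrak{h}(\gamma)=\pm\alpha}\mathfrak{g}_{\pm\gamma}$, any root $\gamma$ with $\mathrm{pr}_\mathfrak{h}(\gamma)=\alpha$ lies in $\alpha+\mathfrak{t}\cap\mathfrak{m}\subset\mathbb{R}_{>0}\alpha+\mathfrak{t}\cap\mathfrak{m}$, so $\gamma=\alpha$ by hypothesis; applying this to $-\gamma$ shows $-\alpha$ is the only root with $\mathrm{pr}_\mathfrak{h}(\gamma)=-\alpha$, whence $\hat{\mathfrak{g}}_{\pm\alpha}=\mathfrak{g}_{\pm\alpha}$. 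By the structural facts recalled in Section \ref{Section-algebraic-setup-ortho} one has $\mathfrak{g}_{\pm\alpha}=\hat{\mathfrak{g}}_{\pm\alpha}=\mathfrak{h}_{\pm\alpha}\oplus\hat{\mathfrak{m}}_{\pm\alpha}$, where $\mathfrak{h}_{\pm\alpha}$ is either $0$ or a ($2$-dimensional) root plane of $\mathfrak{h}$. As $\dim\mathfrak{g}_{\pm\alpha}=2$, we are left with two possibilities: either $\mathfrak{h}_{\pm\alpha}=\mathfrak{g}_{\pm\alpha}$, in which case $\alpha\in\Delta'$ and $\mathfrak{h}_{\pm\alpha}=\mathfrak{g}_{\pm\alpha}=\hat{\mathfrak{g}}_{\pm\alpha}$ as claimed, or $\hat{\mathfrak{m}}_{\pm\alpha}=\mathfrak{g}_{\pm\alpha}\subset\mathfrak{m}$, which must be excluded.

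So suppose, for contradiction, $\mathfrak{g}_{\pm\alpha}\subset\mathfrak{m}$. Let $\mathfrak{t}'$ be the bi-invariant orthogonal complement of $\alpha$ in $\mathfrak{t}\cap\mathfrak{h}$, and $T'=\exp\mathfrak{t}'\subset T_H\subset H$. A root $\gamma$ centralizes $\mathfrak{t}'$ precisely when $\gamma\in\mathbb{R}\alpha+(\mathfrak{t}\cap\mathfrak{m})$, and by the hypothesis together with the symmetry $\gamma\mapsto-\gamma$ the only such roots are $\pm\alpha$ and, possibly, a pair $\pm\beta_0$ inside $\mathfrak{t}\cap\mathfrak{m}$ (present exactly when $\dim\hat{\mathfrak{m}}_0=3$). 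Hence the identity component $G'$ of $C_G(T')$ has Lie algebra
$$\mathfrak{g}'=\mathfrak{c}_\mathfrak{g}(\mathfrak{t}')=\mathfrak{t}+\mathfrak{g}_{\pm\alpha}+\mathfrak{g}_{\pm\beta_0},$$
which is the subalgebra $\mathfrak{g}_{\pm\alpha}+\mathbb{R}\alpha$ of type $\mathrm{A}_1$, plus the subalgebra $\mathfrak{g}_{\pm\beta_0}+(\mathfrak{t}\cap\mathfrak{m})$ of type $\mathrm{A}_1$, plus the central torus algebra $\mathfrak{t}'$, with $\alpha\perp\beta_0$; when $\dim\hat{\mathfrak{m}}_0=1$ one drops $\mathfrak{g}_{\pm\beta_0}$ and keeps $\mathfrak{t}\cap\mathfrak{m}$ as a central line. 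Since $\mathfrak{g}_{\pm\alpha}\subset\mathfrak{m}$ by assumption and $\mathfrak{g}_{\pm\beta_0}\subset\mathfrak{m}$ because $\hat{\mathfrak{g}}_0\cap\mathfrak{h}=\mathfrak{t}\cap\mathfrak{h}$, one gets $\mathfrak{g}'\cap\mathfrak{h}=\mathfrak{t}\cap\mathfrak{h}=\mathbb{R}\alpha\oplus\mathfrak{t}'$, which meets the second $\mathrm{A}_1$-summand (resp. the line $\mathfrak{t}\cap\mathfrak{m}$) only in $0$.

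Now $\mathrm{Fix}_o(T',M)=G'\cdot o$ is a totally geodesic homogeneous Finsler submanifold $(G'/(G'\cap H),F|)$ of $(M,F)$, on which the central torus $\exp\mathfrak{t}'\subset G'\cap H$ acts trivially. Dividing it out and passing to a finite cover, $\mathrm{Fix}_o(T',M)$ is locally isometric to $(\mathrm{SU}(2)\times\mathrm{SU}(2))/(\mathrm{SO}(2)\times\{e\})=\mathrm{S}^2\times\mathrm{S}^3$ with an invariant Finsler metric when $\dim\hat{\mathfrak{m}}_0=3$, and to $(\mathrm{SU}(2)\times\mathrm{S}^1)/(\mathrm{SO}(2)\times\{e\})=\mathrm{S}^2\times\mathrm{S}^1$ when $\dim\hat{\mathfrak{m}}_0=1$. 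In the first case, Example 5 and Proposition \ref{prop-finite-cover} already contradict Assumption (I). In the second case the effective isotropy action merely rotates the plane $\mathfrak{g}_{\pm\alpha}$ and fixes $\mathfrak{t}\cap\mathfrak{m}$, so the invariant metric is moreover invariant under a reflection of $\mathfrak{g}_{\pm\alpha}$; the fixed point set of such a reflection inside $\mathrm{Fix}_o(T',M)$ is a $2$-dimensional homogeneous, hence flat, totally geodesic torus in $(M,F)$, which carries arbitrarily long prime closed geodesics and again contradicts Assumption (I) by Example 2 and Proposition \ref{prop-finite-cover}. Either way we reach a contradiction, so $\mathfrak{g}_{\pm\alpha}\subset\mathfrak{h}$, completing the proof.

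The hard part is the bookkeeping of the second paragraph: one must verify, using that the reductive decomposition is bi-invariant and that $\hat{\mathfrak{g}}_0\cap\mathfrak{h}=\mathfrak{t}\cap\mathfrak{h}$, that $\mathfrak{c}_\mathfrak{g}(\mathfrak{t}')$ has exactly the stated form and that the only root planes it contains inside $\mathfrak{h}$ are those already in $\mathfrak{t}\cap\mathfrak{h}$, so that the isotropy of $\mathrm{Fix}_o(T',M)$ is as small as claimed and one of Examples 2--5 genuinely appears. One should also dispose separately of the degenerate low-rank cases — for instance $\mathrm{rk}\,\mathfrak{h}=1$, where $T'$ is trivial and $M=\mathrm{Fix}_o(T',M)$ is directly one of the bad examples, or situations in which Proposition \ref{prop-rank-bigger-than-one-more} may be invoked instead. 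Granting the shape of $\mathfrak{c}_\mathfrak{g}(\mathfrak{t}')$, the identification of $\mathrm{Fix}_o(T',M)$ up to finite covering and local isometry, together with the extension of its isometry group by the reflection (exactly as done for Example 5 in Section \ref{Section-homogeneous-examples}), is routine.
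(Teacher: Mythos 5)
Your proposal is correct and follows essentially the same route as the paper: assuming $\mathfrak{g}_{\pm\alpha}\subset\mathfrak{m}$, you take $T'$ generated by the bi-invariant orthogonal complement of $\alpha$ in $\mathfrak{t}\cap\mathfrak{h}$, compute $\mathfrak{c}_\mathfrak{g}(\mathfrak{t}')$, and identify $\mathrm{Fix}_o(T',M)$ up to finite cover and local isometry with $\mathrm{S}^1\times\mathrm{S}^2$ or $\mathrm{S}^2\times\mathrm{S}^3$, contradicting Assumption (I) via the examples of Section \ref{Section-homogeneous-examples} and Proposition \ref{prop-finite-cover}, exactly as the paper does in its two cases $\dim\hat{\mathfrak{m}}_0=1,3$. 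The only differences are expository: you spell out the easy identity $\hat{\mathfrak{g}}_{\pm\alpha}=\mathfrak{g}_{\pm\alpha}$ and the reflection/flat-torus argument that the paper leaves implicit in its citation of Example 2.
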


\begin{proof}
We assume conversely that $\alpha$ is not a root of $\mathfrak{h}$, then $\mathfrak{g}_{\pm\alpha}\subset\mathfrak{m}$.
Denote $\mathfrak{t}'$ the bi-invariant orthogonal complement
of $\alpha$ in $\mathfrak{t}\cap\mathfrak{h}$, and $T'$ the
torus in $H$ generated by $\mathfrak{t}'$.
Then $\mathrm{Fix}_o(T',M)$ is totally geodesic in $(M,F)$.

Assume $\dim\hat{\mathfrak{m}}_0=1$, then by the fixed point set technique, $\mathrm{Fix}_o(T',M)$ is a compact coset space $G'/H'$
such that $\mathfrak{g}'=\mathrm{Lie}(G')=\mathbb{R}\oplus \mathrm{A}_1$ and $\mathfrak{h}'=\mathbb{R}$ is contained in the $\mathrm{A}_1$-factor of
$\mathfrak{g}'$. So $(\mathrm{Fix}_o(T',M),F|_{\mathrm{Fix}_o(T',M)})$
is finitely covered by and locally isometric to the homogeneous Finsler space $(M',F')$ where $M'=\mathrm{S}^1\times \mathrm{S}^2=\mathrm{SO}(2)\times \mathrm{SO}(3)/ e\times \mathrm{SO}(2)$ and $F'$ is $\mathrm{SO}(2)\times \mathrm{SO}(3)$-invariant. By Example 2 in Section \ref{Section-homogeneous-examples} and Proposition \ref{prop-finite-cover}, we can find a sequence of prime closed geodesics
$\gamma_n$ for $(\mathrm{Fix}_o(T',M),F|_{\mathrm{Fix}_o(T',M)})$
as well as for $(M,F)$, such that their lengths $l(\gamma_n)$ diverge to infinity. They can not belong to the same orbit, which
contradicts Assumption (I).

Assume $\dim\hat{\mathfrak{m}}_0=3$, then $(\mathrm{Fix}_o(T',M),F|_{\mathrm{Fix}_o(T',M)}$
is finitely covered by and locally isometric to $M'=\mathrm{S}^3\times \mathrm{S}^2=\mathrm{SU}(2)\times \mathrm{SO}(3)/e\times \mathrm{SO}(2)$ with an $\mathrm{SU}(2)\times \mathrm{SO}(3)$-invariant metric. Using Example 5 in Section \ref{Section-homogeneous-examples} instead, we can apply similar
argument as the previous case to get a contradiction to Assumption (I).

This ends the proof of this lemma.
\end{proof}

\begin{lemma}\label{key-lemma-3}
Assume $(G/H,F)$ is an odd dimensional compact connected reversible
homogeneous Finsler space
and keep all relevant notations and assumptions. Let $\alpha$ be
a root of $\mathfrak{g}$ such that
$\mathfrak{g}_{\pm\alpha}\subset\mathfrak{m}$,
$\mathrm{pr}_{\mathfrak{h}}\alpha\neq0$, and
$\alpha$ is the only root of $\mathfrak{g}$ contained
in $(2\mathbb{N}-1)\alpha+\mathfrak{t}\cap\mathfrak{m}$.
Then $\mathfrak{g}_{\pm\alpha}\cap\mathcal{C}\neq\emptyset$.
\end{lemma}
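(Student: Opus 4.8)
The plan is to produce an explicit vector $u \in \mathfrak{g}_{\pm\alpha}\subset\mathfrak{m}$ with $|u|_{\mathrm{bi}}=1$ which generates an $\mathrm{S}^1$-subgroup and satisfies the geodesic condition (\ref{equ-eta-0}), so that Lemma \ref{lemma-6} places $u$ in $\mathcal{C}$. Since $\mathfrak{g}_{\pm\alpha}$ lies in a subalgebra of type $\mathrm{A}_1$, any nonzero $u\in\mathfrak{g}_{\pm\alpha}$ automatically generates an $\mathrm{S}^1$-subgroup, so only the orthogonality equation $\langle u,[u,\mathfrak{m}]_{\mathfrak{m}}\rangle^F_u=0$ remains. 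First I would normalize: pick $u\in\mathfrak{g}_{\pm\alpha}$ with $|u|_{\mathrm{bi}}=1$; I expect the bracket $[u,\mathfrak{m}]_{\mathfrak{m}}$ to land inside a controlled list of $T_H$-weight spaces, and the job is to show $u$ is $g_u^F$-orthogonal to each of them.

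The key step is a decomposition of $[u,\mathfrak{m}]_{\mathfrak{m}}$ according to $T_H$-weights and then killing each piece with the orthogonality lemmas of Section \ref{Section-algebraic-setup-ortho}. Writing $\alpha'=\mathrm{pr}_{\mathfrak{h}}\alpha\neq0$, we have $u\in\hat{\mathfrak{m}}_{\pm\alpha'}$. For a root $\beta$ with $\mathfrak{g}_{\pm\beta}\subset\mathfrak{m}$, the bracket $[\mathfrak{g}_{\pm\alpha},\mathfrak{g}_{\pm\beta}]$ lies in root planes for $\pm\alpha\pm\beta$, whose $T_H$-weights are $\pm\alpha'\pm\mathrm{pr}_{\mathfrak{h}}\beta$. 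I would split the analysis: (i) the part of $[u,\mathfrak{m}]_{\mathfrak{m}}$ with $T_H$-weight different from $\pm\alpha'$ — here Lemma \ref{ortho-lemma-1} (the $g_u^F$-orthogonality of the decomposition (\ref{T-H-decomp-m}) for $u$, which works since $u\in\hat{\mathfrak{m}}_{\pm\alpha'}$ is fixed by the sub-torus $T'=\exp\mathfrak{t}'$ and one applies Lemma \ref{ortho-lemma-2} for the $T'$-decomposition (\ref{T'-decomp-m})) gives orthogonality to $u$; (ii) the part inside $\hat{\hat{\mathfrak{m}}}_0=\sum_{t\geq 0}\hat{\mathfrak{m}}_{\pm t\alpha'}$, for which the reversibility hypothesis is exactly what is needed: by Lemma \ref{ortho-lemma-2-5}, $\hat{\mathfrak{m}}_{\pm t_1\alpha'}$ and $\hat{\mathfrak{m}}_{\pm t_2\alpha'}$ are $g_u^F$-orthogonal when $t_1-t_2\in 2\mathbb{Z}$, and in particular $\langle\hat{\mathfrak{m}}_{\pm\alpha'},\hat{\mathfrak{m}}_0\rangle^F_u=0$; (iii) the part of $[u,u]$-type contributions of weight $\pm2\alpha'$ times further brackets, plus the contribution inside $\mathbb{R}u+[\mathfrak{t}\cap\mathfrak{h},u]$, handled by Lemma \ref{ortho-lemma-2-6}, which gives $\langle u,[\mathfrak{t}\cap\mathfrak{h},u]\rangle^F_u=0$ and in particular $\langle u,[\mathfrak{t}\cap\mathfrak{m},u]\rangle^F_u=0$.

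The crucial bookkeeping is the hypothesis that $\alpha$ is the \emph{only} root of $\mathfrak{g}$ in $(2\mathbb{N}-1)\alpha+\mathfrak{t}\cap\mathfrak{m}$: this forces any root $\gamma$ appearing in $[u,\mathfrak{m}]_{\mathfrak{m}}$ with $\mathrm{pr}_{\mathfrak{h}}\gamma\in\{\alpha',3\alpha',\dots\}$ to actually equal an odd multiple piece sitting in the $\hat{\mathfrak{m}}_{\pm t\alpha'}$ with $t$ odd, so that $t-1$ is even and Lemma \ref{ortho-lemma-2-5} applies, while roots with $\mathrm{pr}_{\mathfrak{h}}\gamma$ equal to an even multiple of $\alpha'$ fall under case (i) unless the weight is exactly $0$, which is covered by (ii)/(iii). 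Assembling cases (i)–(iii) gives $\langle u,[u,\mathfrak{m}]_{\mathfrak{m}}\rangle^F_u=0$, hence $u\in\mathcal{C}$ and $\mathfrak{g}_{\pm\alpha}\cap\mathcal{C}\neq\emptyset$.

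I expect the main obstacle to be step (iii), i.e. verifying that every weight vector occurring in $[u,\mathfrak{m}]_{\mathfrak{m}}$ is genuinely covered by one of Lemmas \ref{ortho-lemma-2}, \ref{ortho-lemma-2-5}, \ref{ortho-lemma-2-6} — in particular, making sure the "only root" hypothesis rules out a stray root plane of weight $\pm\alpha'$ that could pair nontrivially with $u$ under $g_u^F$ and is not reachable by the reversibility argument. Once the weight combinatorics is pinned down, the orthogonality is immediate from the cited lemmas.
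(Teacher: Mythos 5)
You follow the same route as the paper: take any $u\in\mathfrak{g}_{\pm\alpha}$ with $|u|_{\mathrm{bi}}=1$ (it generates an $\mathrm{S}^1$-subgroup because it lies in a subalgebra of type $\mathrm{A}_1$), verify $\langle u,[u,\mathfrak{m}]_{\mathfrak{m}}\rangle^F_u=0$ using the orthogonality lemmas of Section \ref{Section-algebraic-setup-ortho}, and conclude by Lemma \ref{lemma-6}. The gap sits exactly where you flag your ``main obstacle'': you never determine where $[u,\mathfrak{m}]_{\mathfrak{m}}$ actually lies, and the mechanism you propose for the components whose $T_H$-weight is an odd multiple of $\alpha'$ does not work. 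Concretely: (a) Lemma \ref{ortho-lemma-1} does not apply to your $u$ at all (it requires $u\in\hat{\mathfrak{m}}_0$, while here $u\in\hat{\mathfrak{m}}_{\pm\alpha'}$ with $\alpha'\neq0$); your parenthetical repair via $T'$ and Lemma \ref{ortho-lemma-2} is the right one, but it only disposes of the components of nonzero $T'$-weight, not of all components with $T_H$-weight different from $\pm\alpha'$ (weights $0,2\alpha',3\alpha',\dots$ all have $T'$-weight zero). (b) You dispose of odd multiples $t\alpha'$ by Lemma \ref{ortho-lemma-2-5} ``since $t-1$ is even''; but the orthogonality in that lemma holds when $t_1-t_2$ is odd, as its proof shows (the element $g\in T_H$ acts on $\hat{\mathfrak{m}}_{\pm t\alpha'}$ by $(-1)^t$, so $\langle w_1,w_2\rangle^F_u=(-1)^{t_1+t_2}\langle w_1,w_2\rangle^F_u$) and as its own ``in particular'' clause $\langle\hat{\mathfrak{m}}_{\pm\alpha'},\hat{\mathfrak{m}}_0\rangle^F_u=0$ confirms; so it cannot make $u\in\hat{\mathfrak{m}}_{\pm\alpha'}$ orthogonal to odd-weight spaces, and for $t=1$ that claim is false outright since $g^F_u(u,u)=F(u)^2>0$.

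What actually closes the argument, and what the paper establishes as the inclusion (\ref{009}), is that the hypothesis kills these components rather than orthogonalizes them: since $\alpha$ is the only root in $(2\mathbb{N}-1)\alpha+\mathfrak{t}\cap\mathfrak{m}$, the only root of $\mathfrak{g}$ whose $\mathfrak{t}\cap\mathfrak{h}$-projection is a positive odd multiple of $\alpha'$ is $\alpha$ itself, so $\hat{\mathfrak{m}}_{\pm(2k-1)\alpha'}=0$ for $k\geq2$ and $\hat{\mathfrak{m}}_{\pm\alpha'}=\mathfrak{g}_{\pm\alpha}$; a short root-plane computation, using also that $2\alpha$ is not a root, then yields $[u,\mathfrak{m}]_{\mathfrak{m}}\subset\hat{\mathfrak{m}}_0+[\mathfrak{t}\cap\mathfrak{m},u]+\sum_{\beta''\neq0}\hat{\hat{\mathfrak{m}}}_{\pm\beta''}$. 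Each summand is $g_u^F$-orthogonal to $u$: the last by Lemma \ref{ortho-lemma-2}, the first by the ``in particular'' of Lemma \ref{ortho-lemma-2-5} (the only place reversibility enters), and $[\mathfrak{t}\cap\mathfrak{m},u]$ by Lemma \ref{ortho-lemma-2-6}, which you do cite in your step (iii). So your toolkit is the correct one, but the decisive step --- deriving this inclusion from the ``only root'' hypothesis --- is missing from your proposal, and the parity argument you substitute for it is inverted.
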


\begin{proof}
Denote $\mathfrak{t}'$ the bi-invariant orthogonal complement of
$\alpha'=\mathrm{pr}_{\mathfrak{h}}(\alpha)$ in $\mathfrak{t}\cap\mathfrak{h}$. With respect to $\mathfrak{t}'$,
we have the decomposition
$\mathfrak{m}=\sum_{\beta''\in\mathfrak{t}'}
\hat{\hat{\mathfrak{m}}}_{\pm\beta''}$ (see the detailed description after (\ref{T'-decomp-m})).

Let $u$ be any vector in $\mathfrak{g}_{\pm\alpha}$ with $|u|_\mathrm{bi}=1$, then we also have $u\in\hat{\hat{\mathfrak{m}}}_0$.

Direct calculation shows
\begin{equation}\label{009}
[u,\mathfrak{m}]_\mathfrak{m}\subset
\hat{\mathfrak{m}}_0+[\mathfrak{t}\cap\mathfrak{m},u]+\sum_{\beta''\in\mathfrak{t}',
\mathrm{pr}_{\mathfrak{t}'}(\beta'')\neq0}
\hat{\hat{\mathfrak{m}}}_{\pm\beta''}.
\end{equation}
By Lemma \ref{ortho-lemma-2}, Lemma \ref{ortho-lemma-2-5} and
Lemma \ref{ortho-lemma-2-6}, each summation factor in the right
side of (\ref{009}) is $g_u^F$-orthogonal to $u$.
So we have $\langle u,[u,\mathfrak{m}]_\mathfrak{m}\rangle_u^F=0$.
Because $u$ is contained in a subalgebra of type $\mathrm{A}_1$, $u$ generates an $\mathrm{S}^1$-subgroup.

This proves $u\in\mathcal{C}$ and ends the proof of the lemma.
\end{proof}

When Lemma \ref{key-lemma-3} is applied to find obstacle to Assumption (I), it is often accompanied with the following lemma.

\begin{lemma}\label{key-lemma-4}
Assume $(G/H,F)$ is an odd dimensional compact connected homogeneous Finsler space
such that $G=I_0(M,F)$ is a compact simple Lie group and $(M,F)$
satisfies Assumption (I). Then we have the following:
\begin{description}
\item{\rm (1)} If $\Delta\cap\mathfrak{m}=\emptyset$,
i.e. there exist no roots of $\mathfrak{g}$ contained in $\mathfrak{t}\cap\mathfrak{m}$, then for any root plane $\mathfrak{g}_{\pm\beta}$ of $\mathfrak{g}$, $\mathcal{C}\cap\mathfrak{g}_{\pm\beta}=\emptyset$.
\item{\rm (2)} If $\mathfrak{t}\cap\mathfrak{m}$ contains a root
$\alpha$ of $\mathfrak{g}$, then for any root $\beta$ of $\mathfrak{g}$ with
$|\alpha|_{\mathrm{bi}}\neq|\beta|_{\mathrm{bi}}$, we have
$\mathcal{C}\cap\mathfrak{g}_{\pm\alpha}=\emptyset$.
\end{description}
\end{lemma}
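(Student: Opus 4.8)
The plan is to treat (1) and (2) by one common scheme. Since $\dim M$ is odd and Assumption (I) holds, Theorem \ref{thm-rank-inequality} gives $\mathrm{rk}\mathfrak{g}=\mathrm{rk}\mathfrak{h}+1$, so Lemma \ref{key-lemma-1} applies and produces explicit vectors of $\mathcal{C}$ inside $\hat{\mathfrak{m}}_0$; one then pins down $\mathcal{C}\cap\hat{\mathfrak{m}}_0$ using that $\mathcal{C}$ is a single $\mathrm{Ad}(H)$-orbit and that $F$ is $\mathrm{Ad}(H)$-invariant, and finally rules out the remaining root planes by an $\mathrm{Ad}(G)$-conjugacy obstruction. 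The obstruction is the following, used repeatedly: for any root $\beta$ of $\mathfrak{g}$ the root plane $\mathfrak{g}_{\pm\beta}$ lies in the $\mathrm{A}_1$-subalgebra $\mathfrak{s}_\beta=\mathfrak{g}_{\pm\beta}+[\mathfrak{g}_\beta,\mathfrak{g}_{-\beta}]$, on which $\langle\cdot,\cdot\rangle_{\mathrm{bi}}$ is a positive multiple of the negative Killing form of $\mathfrak{su}(2)$, hence round; as the corresponding $\mathrm{SU}(2)$ acts transitively on each metric sphere of $\mathfrak{s}_\beta$ and $\mathfrak{s}_\beta\cap\mathfrak{t}=\mathbb{R}\beta$, every bi-invariant unit vector of $\mathfrak{g}_{\pm\beta}$ is $\mathrm{Ad}(G)$-conjugate to $\pm\beta/|\beta|_{\mathrm{bi}}$. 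Two vectors of $\mathfrak{t}$ are $\mathrm{Ad}(G)$-conjugate iff they lie in one Weyl orbit, and a Weyl element carries a root to a root; so if bi-invariant unit vectors of $\mathfrak{g}_{\pm\alpha}$ and $\mathfrak{g}_{\pm\beta}$ are $\mathrm{Ad}(G)$-conjugate, then $\alpha$ and $\beta$ are proportional, hence (the root system being reduced) equal up to sign, so $|\alpha|_{\mathrm{bi}}=|\beta|_{\mathrm{bi}}$; and a bi-invariant unit vector of $\mathfrak{g}_{\pm\beta}$ that is $\mathrm{Ad}(G)$-conjugate to a vector spanning $\mathfrak{t}\cap\mathfrak{m}$ forces $\mathfrak{t}\cap\mathfrak{m}=\mathbb{R}\beta'$ for a root $\beta'$, i.e. a root of $\mathfrak{g}$ to lie in $\mathfrak{t}\cap\mathfrak{m}$.

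For (1), $\Delta\cap\mathfrak{m}=\emptyset$ gives $\hat{\mathfrak{m}}_0=\mathfrak{t}\cap\mathfrak{m}$, which is $1$-dimensional. Lemma \ref{key-lemma-1} then forces $\mathcal{C}$ to contain both bi-invariant unit vectors $\pm u_1$ of this line, so $\mathcal{C}=\mathrm{Ad}(H)u_1$. If a root plane $\mathfrak{g}_{\pm\beta}$ met $\mathcal{C}$, a bi-invariant unit vector $w\in\mathfrak{g}_{\pm\beta}\cap\mathcal{C}$ would be $\mathrm{Ad}(H)$-conjugate, hence $\mathrm{Ad}(G)$-conjugate, to $u_1$; by the obstruction this puts a root of $\mathfrak{g}$ in $\mathfrak{t}\cap\mathfrak{m}$, contradicting $\Delta\cap\mathfrak{m}=\emptyset$. (If $\mathfrak{g}_{\pm\beta}\subset\mathfrak{h}$ there is nothing to prove, since $\mathcal{C}\subset\mathfrak{m}$.)

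For (2), reducedness together with $\alpha\in\mathfrak{t}\cap\mathfrak{m}$ makes $\pm\alpha$ the only pair of roots in $\mathfrak{t}\cap\mathfrak{m}$, so $\hat{\mathfrak{m}}_0=(\mathfrak{t}\cap\mathfrak{m})+\mathfrak{g}_{\pm\alpha}=\mathfrak{s}_\alpha$ is $3$-dimensional. By Lemma \ref{key-lemma-1} the bi-invariant unit vectors $u_1,u_2$ realizing the minimum and maximum of $F$ on the bi-invariant unit sphere of $\hat{\mathfrak{m}}_0$ lie in $\mathcal{C}$; since $\mathcal{C}$ is one $\mathrm{Ad}(H)$-orbit and $F$ is $\mathrm{Ad}(H)$-invariant, $F(u_1)=F(u_2)$, so $F$ is constant on that sphere, i.e. $F|_{\hat{\mathfrak{m}}_0}$ is a scalar multiple of $|\cdot|_{\mathrm{bi}}$. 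Then for every bi-invariant unit $w\in\hat{\mathfrak{m}}_0$ the Hessian of $F$ along $\hat{\mathfrak{m}}_0$ is proportional to $\langle\cdot,\cdot\rangle_{\mathrm{bi}}$ and $[w,\hat{\mathfrak{m}}_0]\perp_{\mathrm{bi}}w$, so $\langle w,[w,\hat{\mathfrak{m}}_0]\rangle_w^F=0$; by Lemma \ref{ortho-lemma-1} the remaining summands of $[w,\mathfrak{m}]_\mathfrak{m}$ are $g_w^F$-orthogonal to $w$, hence $\langle w,[w,\mathfrak{m}]_\mathfrak{m}\rangle_w^F=0$, and as $w$ lies in an $\mathrm{A}_1$ it generates an $\mathrm{S}^1$-subgroup, so Lemma \ref{lemma-6} gives $w\in\mathcal{C}$. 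In particular $\mathcal{C}$ meets $\mathfrak{g}_{\pm\alpha}$. Now if a root $\beta$ with $|\beta|_{\mathrm{bi}}\neq|\alpha|_{\mathrm{bi}}$ had $\mathfrak{g}_{\pm\beta}\cap\mathcal{C}\neq\emptyset$ (so necessarily $\mathfrak{g}_{\pm\beta}\subset\mathfrak{m}$, as otherwise $\mathcal{C}\cap\mathfrak{g}_{\pm\beta}=\emptyset$ trivially), bi-invariant unit vectors of $\mathfrak{g}_{\pm\alpha}\cap\mathcal{C}$ and of $\mathfrak{g}_{\pm\beta}\cap\mathcal{C}$ would be $\mathrm{Ad}(H)$- hence $\mathrm{Ad}(G)$-conjugate, forcing $|\alpha|_{\mathrm{bi}}=|\beta|_{\mathrm{bi}}$ by the obstruction, a contradiction. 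Hence $\mathcal{C}\cap\mathfrak{g}_{\pm\beta}=\emptyset$, establishing the conclusion of (2).

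The step I expect to demand the most care is the conjugacy obstruction: one must verify that the root ray is a complete $\mathrm{Ad}(G)$-invariant of a bi-invariant unit vector lying in a single root plane, and that distinct root rays stay distinct under the Weyl group precisely when the root lengths differ — this is where the hypothesis that $G$ is simple enters, so that root lengths are comparable throughout $\mathfrak{g}$. Everything else is formal once Lemma \ref{key-lemma-1}, Lemma \ref{ortho-lemma-1}, and the $\mathrm{Ad}(H)$-invariance of $F$ are available.
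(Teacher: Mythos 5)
Your argument is correct and takes essentially the same route as the paper: Lemma \ref{key-lemma-1} supplies a vector of $\mathcal{C}$ in $\hat{\mathfrak{m}}_0$, and the single-$\mathrm{Ad}(H)$-orbit property of $\mathcal{C}$ combined with the criterion that two vectors of $\mathfrak{t}$ are $\mathrm{Ad}(G)$-conjugate iff Weyl-conjugate yields the length/root contradiction, exactly as in the paper (whose own proof, like yours, establishes the conclusion of (2) in the form $\mathcal{C}\cap\mathfrak{g}_{\pm\beta}=\emptyset$, so your reading of the printed statement is the intended one). The only deviation is in (2), where the paper directly compares $u\in\hat{\mathfrak{m}}_0\cap\mathcal{C}$, which is $\mathrm{Ad}(G)$-conjugate to the non-root $|\alpha|_{\mathrm{bi}}^{-1}\alpha$, with $v\in\mathfrak{g}_{\pm\beta}\cap\mathcal{C}$, while you take the harmless extra step of showing $F|_{\hat{\mathfrak{m}}_0}$ is a multiple of $|\cdot|_{\mathrm{bi}}$ so that $\mathcal{C}$ meets $\mathfrak{g}_{\pm\alpha}$ itself before applying the same Weyl-orbit obstruction.
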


\begin{proof}
(1) Assume conversely that $\mathcal{C}$ contains some vector
$v\in\mathfrak{g}_{\pm\beta}$.
For simplicity, we choose the bi-invariant inner product
on $\mathfrak{g}$ such that $|\beta|_\mathrm{bi}=1$. By Lemma \ref{key-lemma-1}, there exists a vector $u\in\mathfrak{t}\cap\mathfrak{m}$ which is contained in $\mathcal{C}$.
Because $v$ is $\mathrm{Ad}(G)$-conjugation to $\beta$, we get
$\mathcal{C}=\mathrm{Ad}(H)u\subset
\mathrm{Ad}(G)\beta$,
i.e. the two vectors $u$ and $\beta$ in $\mathfrak{t}$
are in the same $\mathrm{Ad}(G)$-orbit. By Proposition 2.2, Chapter 7
in \cite{Hegalson}, $u$ and $\beta$ belong to the same Weyl group
orbit. This is impossible because the Weyl group orbit of $\beta$ consists of all roots of $\mathfrak{g}$ with the same length,
but $u$ is not a root of $\mathfrak{g}$.
This proves the statement (1).

(2) Assume conversely that $\mathcal{C}$ contains some vector
$v\in\mathfrak{g}_{\pm\beta}$ such that
$|\alpha|_\mathrm{bi}\neq|\beta|_{\mathrm{bi}}$. For simplicity,
we choose the bi-invariant inner product on $\mathfrak{g}$ such
that $|\beta|_\mathrm{bi}=1$ and $|\alpha|_\mathrm{bi}=c\neq1$.
By Lemma \ref{key-lemma-1}, $\hat{\mathfrak{m}}_0$
contains a vector $u\in\mathcal{C}$, which is $\mathrm{Ad}(G)$-conjugation to $c^{-1}\alpha$. Meanwhile $v$
is $\mathrm{Ad}(G)$-conjugate to $\alpha$. So $c^{-1}\alpha$ and $\beta$ belong to the same $\mathrm{Ad}(G)$-orbit because
$\mathcal{C}\subset
\mathrm{Ad}(G)\cdot(c^{-1}\alpha)\cap\mathrm{Ad}(G)\beta
$. By Proposition 2.2, Chapter 7 in \cite{Hegalson}, $c^{-1}\alpha$ and $\beta$ belong to the same Weyl group orbit.
This is impossible because $c^{-1}\alpha$ is not a root of
$\mathfrak{g}$.
\end{proof}

Now we start the proof of Theorem \ref{classification-thm-2}.
We follow the theme in \cite{XD-towards-odd}, i.e. we divide the discussion into
three cases.
\begin{description}
\item{\bf Case I.} Each root plane of $\mathfrak{h}$ is a root plane of $\mathfrak{g}$.
\item{\bf Case II.} There exists a root plane $\mathfrak{h}_{\pm\alpha'}$ of $\mathfrak{h}$ which is not a root plane of $\mathfrak{g}$, and there are two roots $\alpha$ and $\beta$ of $\mathfrak{g}$ from different simple ideals, such that
    $\mathrm{pr}_\mathfrak{h}(\alpha)=
    \mathrm{pr}_\mathfrak{h}(\beta)$.
\item{\bf Case III.} The same as Case II except that the different roots $\alpha$ and $\beta$ are from the same simple ideal of $\mathfrak{g}$.
\end{description}

In the rest of this section, we will discuss Case I and Case II, and leave Case III to the next section.

{\bf Proof of Theorem \ref{classification-thm-1} in Case I.}
Assume $(M,F)=(G/H,F)$ is an odd dimensional compact connected homogeneous Finsler space belonging to Case I and
satisfying Assumption (I).

The deck transformation induces an injective group endomorphism
from $H/H_0$ to $\pi_1(M)$. By Assumption (I), $\pi_1(M)$ must
be a cyclic group, and so does $H/H_0$. Denote $g\in H$ a representative for a generator of $H/H_0$. Because $\mathrm{Ad}(g)H=H$, we also have
$\mathrm{Ad}(g)\hat{\mathfrak{m}}_0=\hat{\mathfrak{m}}_0$.

By Lemma \ref{key-lemma-1}, there exists a vector
$u_1\in\hat{\mathfrak{m}}_0\cap\mathcal{C}$.
Because in Case I, $[\mathfrak{h},\hat{\mathfrak{m}}_0]=0$, it
is easy to get
$$\mathcal{C}=\{\mathrm{Ad}(g^k)u_1\mbox{ with all }k\in\mathbb{Z}\}\subset\hat{\mathfrak{m}}_0.$$
By Lemma \ref{key-lemma-1}, $\mathcal{C}$ contains at least two points.

We claim $\mathrm{Ad}(g)$ fixes some nonzero vector $u\in\mathfrak{m}$, which can be proved as following.

Let $\mathfrak{t}''$ be a Cartan subalgebra of
$\mathfrak{g}$ such that $T''=\exp\mathfrak{t}''$ contains $g$.
Then $\mathrm{Ad}(g)$ fixes each vector in $\mathfrak{t}''$.
Obviously $\mathfrak{t}''$ is not contained in $\mathfrak{h}$,
i.e. $\mathrm{pr}_\mathfrak{m}(\mathfrak{t}'')\neq0$.

Because $H_0$ is normal in $H$, we have $\mathrm{Ad}(g)\mathfrak{h}=\mathfrak{h}$, and $\mathrm{Ad}(g)$
preserves
the bi-invariant orthogonal reductive decomposition $\mathfrak{g}=\mathfrak{h}+\mathfrak{m}$.
So $\mathrm{Ad}(g)$ fixes each
vector in $\mathrm{pr}_\mathfrak{m}(\mathfrak{t}'')$.
Any nonzero vector $u\in\mathrm{pr}_\mathfrak{m}(\mathfrak{t}'')$
meets the requirement.

This ends the proof of the claim.

The previous claim implies
$M'=\mathrm{Fix}_o(g,M)$ has a positive dimension.
By Lemma \ref{key-lemma-1}, it is obvious that
$T_oM'\cap\mathcal{C}=\emptyset$. The submanifold
$(M',F|_{M'})$ is a homogeneous Finsler space. By Lemma \ref{lemma-1} and the homogeneity, there must exist
one closed geodesic $c(t)$ on $(M',F')$ such that $c(0)=o$, $\dot{c}(0)=u'\in\mathfrak{m}$ and $|u'|_\mathrm{bi}=1$.
Because $M'$ is totally geodesic in $M$, we have $u'\in\mathcal{C}\cap T_oM'$, which contradicts our previous
observation.

This proves Theorem \ref{classification-thm-2} in Case I, which
can be summarized as

\begin{proposition}\label{prop-Case-I}
If $(M,F)=(G/H,F)$ with $G=I_0(M,F)$ is an odd dimensional compact connected homogeneous Finsler space in Case I,  then it has at least two $\hat{G}$-orbits of prime closed geodeics.
\end{proposition}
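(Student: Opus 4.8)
The plan is to argue by contradiction: assuming $(M,F)=(G/H,F)$ is in Case I and satisfies Assumption (I), I will exhibit a closed geodesic through $o=eH$ whose unit initial velocity cannot lie in $\mathcal{C}$, the single $\mathrm{Ad}(H)$-orbit of closed-geodesic directions in $\mathfrak{m}$. The structural feature of Case I that drives everything is that $[\mathfrak{h},\hat{\mathfrak{m}}_0]=0$; combined with Lemma \ref{key-lemma-1} this will confine $\mathcal{C}$ to the low-dimensional subalgebra $\hat{\mathfrak{m}}_0=\mathfrak{c}_\mathfrak{m}(\mathfrak{h})$ and make its rigidity visible.

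First I would note that Assumption (I) forces $\pi_1(M)$, and hence its subgroup $H/H_0$ (which embeds in $\pi_1(M)$ via deck transformations), to be cyclic, since closed geodesics representing free homotopy classes of different prime orders cannot be carried onto one another by the connected group $\hat G$. Fix $g\in H$ representing a generator of $H/H_0$. Because $\mathrm{Ad}(g)\mathfrak{h}=\mathfrak{h}$ and $\mathrm{Ad}(g)$ preserves the bi-invariant reductive decomposition, it preserves $\hat{\mathfrak{m}}_0$. Choosing $u_1\in\hat{\mathfrak{m}}_0\cap\mathcal{C}$ by Lemma \ref{key-lemma-1} and using that $\mathrm{Ad}(H_0)$ fixes $\hat{\mathfrak{m}}_0$ pointwise in Case I, one gets $\mathcal{C}=\mathrm{Ad}(H)u_1=\{\mathrm{Ad}(g^k)u_1:k\in\mathbb{Z}\}\subset\hat{\mathfrak{m}}_0$. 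Lemma \ref{key-lemma-1} also supplies a second point $u_2\ne u_1$ in $\hat{\mathfrak{m}}_0\cap\mathcal{C}$, so $\mathrm{Ad}(g)u_1\ne u_1$.

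Next I would produce a nonzero $\mathrm{Ad}(g)$-fixed vector in $\mathfrak{m}$. Pick a maximal torus $T''=\exp\mathfrak{t}''$ of $G$ containing $g$; since in the odd-dimensional case $\mathrm{rk}\mathfrak{g}=\mathrm{rk}\mathfrak{h}+1$ (Theorem \ref{thm-rank-inequality}), $\mathfrak{t}''\not\subset\mathfrak{h}$, so $\mathrm{pr}_\mathfrak{m}(\mathfrak{t}'')\ne 0$, and this subspace is fixed by $\mathrm{Ad}(g)$ because $\mathrm{Ad}(g)$ fixes $\mathfrak{t}''$ pointwise and commutes with $\mathrm{pr}_\mathfrak{m}$. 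Hence $M'=\mathrm{Fix}_o(g,M)$ has positive dimension; by the fixed-point-set technique of Section 2 it is a totally geodesic homogeneous Finsler submanifold of the form $(G'/G'\cap H,F|_{M'})$ with $\dim G'>0$, and $T_oM'$ is precisely the $\mathrm{Ad}(g)$-fixed subspace of $\mathfrak{m}$. If some $\mathrm{Ad}(g^k)u_1$ lay in $T_oM'$ it would be $\mathrm{Ad}(g)$-fixed, forcing $\mathrm{Ad}(g)u_1=u_1$, a contradiction; thus $\mathcal{C}\cap T_oM'=\emptyset$. On the other hand, Lemma \ref{lemma-1} applied to $(M',F|_{M'})$, together with its homogeneity, yields a closed geodesic of $M'$ through $o$, which, $M'$ being totally geodesic, is a closed geodesic of $(M,F)$; its unit initial velocity therefore lies in $\mathcal{C}\cap T_oM'$ — the desired contradiction. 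This establishes the proposition.

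I expect the main obstacle to be the step $\mathcal{C}\subset\hat{\mathfrak{m}}_0$: one must verify that in Case I the identity $[\mathfrak{h},\hat{\mathfrak{m}}_0]=0$ holds and that no summand $\hat{\mathfrak{m}}_{\pm\alpha'}$ with $\alpha'\ne 0$ (in particular no root plane of $\mathfrak{m}$) can meet $\mathcal{C}$. This is exactly what the $g_u^F$-orthogonality lemmas of Section 5, packaged into Lemma \ref{key-lemma-1}, and — when $G$ is simple — the Weyl-orbit argument of Lemma \ref{key-lemma-4}(1) are designed to handle; once $\mathcal{C}$ is trapped in $\hat{\mathfrak{m}}_0$, the rest is the routine fixed-point-submanifold argument above combined with the elementary two-geodesic bound of Lemma \ref{lemma-1}.
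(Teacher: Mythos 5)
Your argument is correct and follows essentially the same route as the paper's proof of this proposition: cyclic $H/H_0$ with generator $g$, the confinement $\mathcal{C}=\{\mathrm{Ad}(g^k)u_1\}\subset\hat{\mathfrak{m}}_0$ via $[\mathfrak{h},\hat{\mathfrak{m}}_0]=0$ and Lemma \ref{key-lemma-1}, an $\mathrm{Ad}(g)$-fixed vector from a Cartan subalgebra through $g$, and the contradiction between $\mathcal{C}\cap T_o\mathrm{Fix}_o(g,M)=\emptyset$ and the closed geodesic in the totally geodesic homogeneous fixed-point set supplied by Lemma \ref{lemma-1}. You merely make explicit a few details the paper leaves terse (e.g.\ why $T_oM'$ is the $\mathrm{Ad}(g)$-fixed subspace and why no $\mathrm{Ad}(g^k)u_1$ can lie in it), so no substantive difference.
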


{\bf Proof of Theorem \ref{classification-thm-2} in Case II.}
Assume  $(M,F)=(G/H,F)$ is an odd dimensional compact connected reversible homogeneous Finsler space belonging to Case II and
satisfying Assumption (I).

We have a Lie algebra direct sum
decomposition $\mathfrak{g}=\mathfrak{g}_1\oplus\mathfrak{g}_2\oplus\mathfrak{g}_3$
in which $\mathfrak{g}_1$ and $\mathfrak{g}_2$ are simple, such that
there are roots of $\mathfrak{g}$, $\alpha\in\mathfrak{t}\cap\mathfrak{g}_1$ and
$\beta\in\mathfrak{t}\cap\mathfrak{g}_2$,
and $\alpha'=\mathrm{pr}_\mathfrak{h}(\alpha)
=\mathrm{pr}_\mathfrak{h}(\beta)$ is a root of $\mathfrak{h}$.
Obviously we have
\begin{eqnarray*}
\hat{\mathfrak{m}}_0=\mathfrak{t}\cap\mathfrak{m}=
\mathbb{R}(\alpha-\beta),\quad\mbox{and}\quad
\mathfrak{h}_{\pm\alpha'}\subset
\hat{\mathfrak{m}}_{\pm\alpha'}=\mathfrak{g}_{\pm\alpha}
+\mathfrak{g}_{\pm\beta}.
\end{eqnarray*}

By Lemma \ref{key-lemma-1}, we can find a vector $u\in\mathcal{C}\cap\hat{\mathfrak{m}}_0=\mathcal{C}\cap\mathfrak{t}$.

If there exists a root $\delta$ of $\mathfrak{g}_1$, such that
$\delta\neq \pm\alpha$ and $\delta$ is not bi-invariant orthogonal
to $\alpha$, then $\mathfrak{g}_{\pm\delta}\subset\mathfrak{m}$,
and by Lemma \ref{key-lemma-3}, there exists a vector $v\in\mathfrak{g}_{\pm\delta}\cap\mathcal{C}$. By Assumption (I),
$\mathcal{C}=\mathrm{Ad}(H)v
\subset\mathrm{Ad}(G)v\subset\mathfrak{g}_1$.
But $u\in\mathcal{C}$ is not contained in $\mathfrak{g}_1$.
This is a contradiction. To summarize, $\mathfrak{g}_1=\mathrm{A}_1$, otherwise it can not be simple. Similarly, we also have $\mathfrak{g}_2=\mathrm{A}_1$.

Next, we prove $\mathfrak{g}_3$ must be zero. Assume conversely it is not, by Lemma \ref{lemma-no-abel-factor}, $\mathfrak{g}_3$ is
semi-simple. Because $\mathfrak{t}\cap\mathfrak{m}\subset
\mathfrak{t}\cap(\mathfrak{g}_1+\mathfrak{g}_2)$, we have $\mathfrak{t}\cap\mathfrak{g}_3\subset\mathfrak{h}$. By Lemma \ref{key-lemma-2}, any root plane in $\mathfrak{g}_3$ is also contained in $\mathfrak{h}$. So the ideal $\mathfrak{g}_3$ of $\mathfrak{g}$ is contained in $\mathfrak{h}$. But $G=I_0(M,F)$ acts transitively on $M=G/H$. This is a contradiction.

So in this case $\mathfrak{h}$ is a diagonal $\mathrm{A}_1$
in $\mathfrak{g}=\mathrm{A}_1\oplus \mathrm{A}_1$, so $(G/H,F)$ is locally isometric to a Riemannian symmetric $\mathrm{S}^3$. By Proposition \ref{prop-classification-local}, $(M,F)$ is either the Riemannian
symmetric $\mathrm{S}^3$ or the Riemanninan symmetric $\mathbb{R}\mathrm{P}^3$.

This proves Theorem \ref{classification-thm-2} in Case II, which
can be summarized as following.

\begin{proposition}\label{prop-Case-II}
If $(M,F)=(G/H,F)$ with $G=I_0(M,F)$ is an odd dimensional  compact connected reversible homogeneous Finsler space in Case II
satisfying Assumption (I), then $(M,F)$ is either the Riemannian
symmetric $\mathrm{S}^3$ or the Riemannian symmetric $\mathbb{R}\mathrm{P}^3$.
\end{proposition}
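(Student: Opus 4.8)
The plan is to run the Case~II analysis sketched above to its conclusion: to squeeze the pair $(\mathfrak{g},\mathfrak{h})$ down to a diagonally embedded $\mathrm{A}_1$ inside $\mathrm{A}_1\oplus\mathrm{A}_1$ and then quote Proposition~\ref{prop-classification-local}. First I would unwind the Case~II hypothesis into a concrete picture: write $\mathfrak{g}=\mathfrak{g}_1\oplus\mathfrak{g}_2\oplus\mathfrak{g}_3$ with $\mathfrak{g}_1,\mathfrak{g}_2$ simple and carrying roots $\alpha\in\mathfrak{t}\cap\mathfrak{g}_1$, $\beta\in\mathfrak{t}\cap\mathfrak{g}_2$ of $\mathfrak{g}$ with $\mathfrak{g}_{\pm\alpha},\mathfrak{g}_{\pm\beta}\subset\mathfrak{m}$ and $\alpha'=\mathrm{pr}_\mathfrak{h}(\alpha)=\mathrm{pr}_\mathfrak{h}(\beta)$ a root of $\mathfrak{h}$. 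Since $\dim M$ is odd, Theorem~\ref{thm-rank-inequality} gives $\mathrm{rk}\,\mathfrak{g}=\mathrm{rk}\,\mathfrak{h}+1$, and because $\alpha-\beta$ is not a root of $\mathfrak{g}$ this forces $\hat{\mathfrak{m}}_0=\mathfrak{t}\cap\mathfrak{m}=\mathbb{R}(\alpha-\beta)$ and $\mathfrak{h}_{\pm\alpha'}\subset\hat{\mathfrak{g}}_{\pm\alpha'}=\mathfrak{g}_{\pm\alpha}+\mathfrak{g}_{\pm\beta}$.

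The heart of the argument is then a chain of contradictions fed by the fact that $\mathcal{C}$ is a single $\mathrm{Ad}(H)$-orbit. Lemma~\ref{key-lemma-1} produces a closed-geodesic direction $u\in\mathcal{C}\cap\hat{\mathfrak{m}}_0\subset\mathfrak{t}$, and $u$ lies in neither $\mathfrak{g}_1$ nor $\mathfrak{g}_2$. Next I would prove $\mathfrak{g}_1=\mathrm{A}_1$: if $\mathrm{rk}\,\mathfrak{g}_1\geq 2$, irreducibility of the root system of $\mathfrak{g}_1$ supplies a root $\delta\neq\pm\alpha$ that is not bi-invariant orthogonal to $\alpha$; using the shape of $\hat{\mathfrak{g}}_{\pm\alpha'}$ and the rank identity one checks $\mathfrak{g}_{\pm\delta}\subset\mathfrak{m}$ and that $\delta$ satisfies the hypotheses of Lemma~\ref{key-lemma-3} (this is exactly where the reversibility of $F$ is consumed, via Lemma~\ref{ortho-lemma-2-5}), so there is $v\in\mathfrak{g}_{\pm\delta}\cap\mathcal{C}$; then $\mathcal{C}=\mathrm{Ad}(H)v\subset\mathrm{Ad}(G)v\subset\mathfrak{g}_1$, contradicting $u\notin\mathfrak{g}_1$. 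By symmetry $\mathfrak{g}_2=\mathrm{A}_1$. Then I would kill $\mathfrak{g}_3$: it is semisimple by Lemma~\ref{lemma-no-abel-factor}; since $\mathfrak{t}\cap\mathfrak{m}=\mathbb{R}(\alpha-\beta)\subset\mathfrak{g}_1+\mathfrak{g}_2$ we get $\mathfrak{t}\cap\mathfrak{g}_3\subset\mathfrak{h}$, and each root $\gamma$ of $\mathfrak{g}_3$ is the only root of $\mathfrak{g}$ in $\mathbb{R}_{>0}\gamma+\mathfrak{t}\cap\mathfrak{m}$, so Lemma~\ref{key-lemma-2} puts $\mathfrak{g}_{\pm\gamma}$ into $\mathfrak{h}$; hence $\mathfrak{g}_3\subset\mathfrak{h}$, which is impossible since $G$ acts almost effectively on $M=G/H$ by Lemma~\ref{lemma-3}. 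Therefore $\mathfrak{g}=\mathrm{A}_1\oplus\mathrm{A}_1$ with $\mathrm{rk}\,\mathfrak{h}=1$ and a nonzero root plane $\mathfrak{h}_{\pm\alpha'}$, so $\mathfrak{h}$ is a diagonally embedded $\mathrm{A}_1$; the isotropy action on $\mathfrak{m}\cong\mathbb{R}^3$ is transitive on directions, so $F$ is round, and $(G/H,F)$ is locally isometric to the symmetric $\mathrm{S}^3$. Proposition~\ref{prop-classification-local} then finishes the proof: $(M,F)$ is the Riemannian symmetric $\mathrm{S}^3$ or $\mathbb{R}\mathrm{P}^3$.

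The step I expect to be the main obstacle is the rank-reduction to $\mathrm{A}_1\oplus\mathrm{A}_1$, i.e. certifying that the auxiliary root $\delta$ in a higher-rank simple factor genuinely lies in $\mathfrak{m}$ and verifies the uniqueness hypothesis of Lemma~\ref{key-lemma-3}. Because $\mathfrak{t}\cap\mathfrak{m}=\mathbb{R}(\alpha-\beta)$ sits diagonally across $\mathfrak{g}_1$ and $\mathfrak{g}_2$, one must track how $\mathfrak{h}$ meets $\mathfrak{g}_1$: the only root plane of $\mathfrak{h}$ meeting $\mathfrak{g}_1$ is the diagonal $\mathfrak{h}_{\pm\alpha'}$, which is what yields $\mathfrak{g}_{\pm\delta}\subset\mathfrak{m}$ for non-proportional $\delta$, and the odd-multiple string over $\delta$ stays inside $\mathfrak{g}_1$ apart from the line $\mathfrak{t}\cap\mathfrak{m}$, which is what makes the uniqueness condition hold. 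Everything else --- finding $v\in\mathfrak{g}_{\pm\delta}\cap\mathcal{C}$, disposing of $\mathfrak{g}_3$, and pinning down the metric at the end --- is a routine application of the orthogonality lemmas of Section~\ref{Section-algebraic-setup-ortho} and the bad-example bookkeeping of Section~\ref{Section-homogeneous-examples} that is already packaged into Lemmas~\ref{key-lemma-2} and~\ref{key-lemma-3}.
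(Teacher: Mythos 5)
Your proposal is correct and follows essentially the same route as the paper's proof of this proposition: the same decomposition $\mathfrak{g}=\mathfrak{g}_1\oplus\mathfrak{g}_2\oplus\mathfrak{g}_3$, the vector $u\in\mathcal{C}\cap\hat{\mathfrak{m}}_0$ from Lemma \ref{key-lemma-1}, the auxiliary root $\delta$ fed into Lemma \ref{key-lemma-3} (where reversibility enters) to force $\mathfrak{g}_1=\mathfrak{g}_2=\mathrm{A}_1$, the elimination of $\mathfrak{g}_3$ via Lemmas \ref{lemma-no-abel-factor} and \ref{key-lemma-2}, and the final appeal to Proposition \ref{prop-classification-local}. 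The only additions are expository (checking $\mathfrak{g}_{\pm\delta}\subset\mathfrak{m}$ and the roundness of $F$ from the transitive isotropy action), which the paper leaves implicit.
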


\section{Proof of Theorem \ref{classification-thm-2} in Case III}

We continue the proof of Theorem \ref{classification-thm-2}
in Case III.
Let $(M,F)=(G/H,F)$ with $G=I_0(M,F)$ be
an odd dimensional compact connected homogeneous Finsler space satisfying Assumption (I). We keep all relevant notations and assumptions.
In particular, there exists a root plane $\mathfrak{g}_{\pm\alpha'}$ of $\mathfrak{h}$, which is not a root plane of $\mathfrak{g}$, and there exists a pair of different roots
$\alpha$ and $\beta$ from the same simple ideal in $\mathfrak{g}$,
such that $\mathrm{pr}_\mathfrak{h}(\alpha)=\mathrm{pr}_\mathfrak{h}(\beta)
=\alpha'$.

Recall that $\mathcal{C}\subset\mathfrak{m}$ is the subset of all vectors $u$'s such that $|u|_\mathrm{bi}=1$ and the geodesic of
$(M,F)$ passing $o$ in the direction of $u$ is closed. By Assumption (I), $\mathcal{C}$ is an $\mathrm{Ad}(H)$-orbit.

Using similar technique as in the last section, it is not hard to see that $G$ must be simple, i.e. we have the following lemma.

\begin{lemma}\label{lemma-odd-simple-g}
For any odd dimensional compact connected homogeneous Finsler space $(M,F)=(G/H,F)$
in Case III, with $G=I_0(M,F)$ and satisfying Assumption (I), the group $G$ is simple.
\end{lemma}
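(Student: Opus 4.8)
The plan is to argue by contradiction, essentially reproducing the step in the proof of Proposition \ref{prop-Case-II} that eliminates the extra ideal $\mathfrak{g}_3$, but now exploiting the feature of Case III that the two roots $\alpha,\beta$ with $\mathrm{pr}_\mathfrak{h}(\alpha)=\mathrm{pr}_\mathfrak{h}(\beta)$ lie in one and the same simple ideal. By Lemma \ref{lemma-no-abel-factor} the Lie algebra $\mathfrak{g}$ is semisimple; write $\mathfrak{g}=\mathfrak{g}_1\oplus\mathfrak{g}'$, where $\mathfrak{g}_1$ is the simple ideal containing $\alpha$ and $\beta$ and $\mathfrak{g}'$ is the sum of the remaining simple ideals. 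Assume $\mathfrak{g}'\neq 0$; I will contradict the fact (from Lemma \ref{lemma-3}) that $\mathfrak{h}$ contains no nonzero ideal of $\mathfrak{g}$.

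First I would observe that $\mathfrak{t}\cap\mathfrak{m}\subset\mathfrak{g}_1$. Indeed, since $\mathrm{pr}_\mathfrak{h}(\alpha)=\mathrm{pr}_\mathfrak{h}(\beta)$ with $\alpha\neq\beta$, the nonzero vector $\alpha-\beta$ lies in $\mathfrak{t}\cap\mathfrak{m}$, which is one-dimensional for a fundamental Cartan subalgebra; hence $\mathfrak{t}\cap\mathfrak{m}=\mathbb{R}(\alpha-\beta)\subset\mathfrak{g}_1$. Consequently every vector of $\mathfrak{t}\cap\mathfrak{g}'$ is bi-invariant orthogonal to $\mathfrak{t}\cap\mathfrak{m}$, so $\mathfrak{t}\cap\mathfrak{g}'\subset\mathfrak{t}\cap\mathfrak{h}$, and in particular every root $\gamma$ of $\mathfrak{g}'$, regarded as a vector in $\mathfrak{t}$, lies in $\mathfrak{t}\cap\mathfrak{h}$.

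Next I would apply Lemma \ref{key-lemma-2} to each root $\gamma$ of $\mathfrak{g}'$. Its hypotheses are met: $\gamma\in\mathfrak{t}\cap\mathfrak{h}$ by the previous paragraph, and any root $\delta$ of $\mathfrak{g}$ lying in $\mathbb{R}_{>0}\gamma+\mathfrak{t}\cap\mathfrak{m}$ has $\mathfrak{g}'$-component a positive multiple of $\gamma$, hence cannot be a root of $\mathfrak{g}_1$; being a root of $\mathfrak{g}'$ it has trivial $\mathfrak{g}_1$-component, which forces the $\mathfrak{t}\cap\mathfrak{m}$-part to vanish and $\delta=c\gamma$ with $c>0$, so $\delta=\gamma$ because the root system of a compact Lie algebra is reduced. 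Thus Lemma \ref{key-lemma-2} gives $\mathfrak{g}_{\pm\gamma}=\mathfrak{h}_{\pm\gamma}\subset\mathfrak{h}$ for every root $\gamma$ of $\mathfrak{g}'$. Combining this with $\mathfrak{t}\cap\mathfrak{g}'\subset\mathfrak{h}$ and the root space decomposition of $\mathfrak{g}'$ with respect to $\mathfrak{t}\cap\mathfrak{g}'$ yields $\mathfrak{g}'\subset\mathfrak{h}$, so $\mathfrak{g}'$ is a nonzero ideal of $\mathfrak{g}$ contained in $\mathfrak{h}$, which is impossible. Hence $\mathfrak{g}'=0$ and $G$ is simple.

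The whole argument is routine once Lemma \ref{key-lemma-2} is available; the only point needing a little care is verifying that no root of $\mathfrak{g}$ other than $\gamma$ meets $\mathbb{R}_{>0}\gamma+\mathfrak{t}\cap\mathfrak{m}$, and this is exactly where the containment $\mathfrak{t}\cap\mathfrak{m}\subset\mathfrak{g}_1$, peculiar to Case III, does the work. No reversibility of $F$ is needed for this particular lemma.
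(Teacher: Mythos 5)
Your proposal is correct and follows essentially the same route as the paper: decompose $\mathfrak{g}$ into the simple ideal containing $\alpha,\beta$ plus the rest, observe $\mathfrak{t}\cap\mathfrak{m}=\mathbb{R}(\alpha-\beta)$ lies in the first ideal so the Cartan part of the remaining ideal sits in $\mathfrak{h}$, then use Lemma \ref{key-lemma-2} to push all its root planes into $\mathfrak{h}$ and contradict effectiveness of the $G$-action. Your explicit verification of the hypothesis of Lemma \ref{key-lemma-2} (that $\gamma$ is the only root in $\mathbb{R}_{>0}\gamma+\mathfrak{t}\cap\mathfrak{m}$) is a detail the paper leaves implicit, but the argument is the same.
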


\begin{proof}
Assume conversely that $G$ is not simple. By Lemma \ref{lemma-no-abel-factor}, $G$ is semi-simple. So we
have a nontrivial direct sum decomposition
$\mathfrak{g}=\mathfrak{g}_1\oplus\mathfrak{g}_2$,
in which $\mathfrak{g}_1$ is a simple ideal, from which
we get the roots $\alpha$ and $\beta$ indicated in Case III,
and $\mathfrak{g}_2$ is a semi-simple ideal.
Obviously $\mathfrak{t}\cap\mathfrak{m}=\mathbb{R}(\alpha-\beta)
\in\mathfrak{g}_1$, so $\mathfrak{t}\cap\mathfrak{g}_2\in\mathfrak{h}$. By Lemma \ref{key-lemma-2},
any root plane of $\mathfrak{g}_2$ is contained in $\mathfrak{h}$.
So we have $\mathfrak{g}_2\subset\mathfrak{h}$, which contradicts
to the fact that $G=I_0(M,F)$ acts transitively on $M=G/H$.
This ends the proof of this lemma.
\end{proof}

{\bf Proof of Theorem \ref{classification-thm-2} in Case III.} We will check
case by case each possible type of $\mathfrak{g}=\mathrm{Lie}(G)$ from $\mathrm{A}_n$ to $\mathrm{G}_2$, and
each possible unordered pair of roots $\alpha$ and $\beta$ of $\mathfrak{g}$.

We follow the convention in \cite{XD-normal-homogeneous}\cite{XD-towards-odd}\cite{XW-KVCL-Riemannian-normal}
for presenting roots
of the compact simple Lie algebra $\mathfrak{g}$.
We choose a suitable bi-invariant inner product on $\mathfrak{g}$, and isometrically identify
$\mathfrak{t}$ with an Euclidean space $\mathbf{V}$ satisfying:
\begin{description}
\item{\rm (1)} When $\mathfrak{g}=\mathrm{A}_n$, we denote
$\{e_1,\ldots,e_{n+1}\}$ the standard orthonormal basis of $\mathbb{R}^{n+1}$, and $\mathbf{V}$
the orthogonal complement of $e_1+\cdots+e_{n+1}$;
\item{\rm (2)} When $\mathrm{rk}\mathfrak{g}=n$ and $\mathfrak{g}\neq \mathrm{A}_n$, we denote $\{e_1,\ldots,e_n\}$
the standard orthonormal basis of $\mathbf{V}=\mathbb{R}^n$;
\item{\rm (3)} Under this identification, the root system $\Delta$ of $\mathfrak{g}$ is presented as in Table \ref{table-1}.
\end{description}

\begin{table}
  \centering
\begin{tabular}{|c|c|}
  \hline
  % after \\: \hline or \cline{col1-col2} \cline{col3-col4} ...
  $\mathfrak{g}$ & Root system $Delta$ of $\mathfrak{g}$ \\ \hline
  $\mathrm{A}_n$ & $\pm(e_i-e_j)$, $\forall 1\leq i<j\leq n+1$ \\ \hline
  $\mathrm{B}_n$ & $\pm e_i\pm e_j$, $\forall 1\leq i<j\leq n$; $\pm e_i$, $\forall 1\leq i\leq n$ \\ \hline
  $\mathrm{C}_n$ & $\pm e_i\pm e_j$, $\forall 1\leq i<j\leq n$; $\pm 2e_i$, $\forall 1\leq i\leq n$ \\ \hline
  $\mathrm{D}_n$ & $\pm e_i\pm e_j$, $\forall 1\leq i<j\leq n$ \\ \hline
  $\mathrm{E}_6$ & $\pm e_i\pm e_j$, $\forall 1\leq i<j<6$; \\
        & $\frac12(\pm e_1\pm\cdots\pm e_5\pm\sqrt{3}e_6)$ with odd plus signs \\ \hline
  $\mathrm{E}_7$ & $\pm e_i\pm e_j$, $\forall 1\leq i<j<7$; $\pm\sqrt{2}e_7$;\\
        & $\pm\frac12 e_1\pm\cdots\pm\frac12 e_6\pm\frac{\sqrt{2}}2 e_7$ with even $+\frac12$'s \\ \hline
  $\mathrm{E}_8$ & $\pm e_i\pm e_j$, $\forall 1\leq i<j\leq 8$; \\
        & $\frac12(\pm e_1\pm\cdots\pm e_8)$ with even plus signs \\ \hline
  $\mathrm{F}_4$ & $\pm e_i\pm e_j$, $\forall 1\leq i<j\leq 4$; $\pm e_i$, $\forall 1\leq i\leq 4$;\\
        & $\frac12(\pm e_1\pm e_2\pm e_3\pm e_4)$\\ \hline
  $\mathrm{G}_2$ & $\pm e_1,\pm\frac12e_1\pm\frac{\sqrt{3}}2e_2,
\pm\sqrt{3}e_2,
\pm\frac32e_1\pm\frac{\sqrt{3}}2e_2$\\
  \hline
\end{tabular}
  \caption{Root system of compact simple Lie algebras}\label{table-1}
\end{table}

Using the Weyl group action on the pair $\alpha$ and $\beta$, we can reduce the case number significantly. Moreover, when $\mathfrak{g}=\mathrm{D}_4$ or $\mathrm{E}_6$, we can use outer automorphism to change the pair $\alpha=e_1+e_2$ and $\beta=-e_3-e_4$ to $\alpha=e_1+e_2$ and $\beta=e_2-e_1$.
But still, many subcases remain.
Observing that for many subcases we can apply similar argument, we sort all the subcases into
five groups, from Case III-A to Case III-E.

{\bf Case III-A.}
Assume for the root $\alpha'$ of $\mathfrak{h}$ described in Case III, we can find two roots
$\alpha$ and $\beta$ of $\mathfrak{g}$ such that
$\mathrm{pr}_\mathfrak{h}(\alpha)=\mathrm{pr}_\mathfrak{h}(\beta)=\alpha'$
and the angle
$\theta_{\alpha,\beta}$ between $\alpha$ and $\beta$ is
$\pi/3$ and $2\pi/3$.

The angles are defined with respect to the chosen bi-invariant inner product on $\mathfrak{g}$.
Notice that all possible angles between
the two roots $\alpha$ and $\beta$ in Case III are $\pi/6$,
$\pi/4$, $\pi/3$, $\pi/2$, $2\pi/3$, $3\pi/4$ and $5\pi/6$.

The following lemma provides the contradiction.

\begin{lemma}\label{lemma-no-pi-over-3-or-2-pi-over-3}
Assume $(M,F)=(G/H,F)$ is an odd dimensional compact connected reversible homogeneous Finsler space in Case III such that $G=I_0(M,F)$ and $(M,F)$ satisfies Assumption (I). Then for the roots $\alpha$ and
$\beta$ in Case III, their angle $\theta_{\alpha,\beta}$ can not be $\pi/3$ or $2\pi/3$.
\end{lemma}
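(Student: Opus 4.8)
The plan is to derive a contradiction with Assumption (I) by producing, inside a suitable totally geodesic fixed-point submanifold, a family of prime closed geodesics whose lengths diverge — or alternatively two prime closed geodesics that cannot lie in the same $\hat{G}$-orbit. The starting observation is that if $\theta_{\alpha,\beta}$ is $\pi/3$ or $2\pi/3$, then $\alpha$ and $\beta$ generate a root subsystem of type $\mathrm{A}_2$; in particular $\gamma=\alpha-\beta$ (for the $\pi/3$ case, after possibly replacing $\beta$ by $-\beta$) or a short combination is \emph{also} a root of $\mathfrak{g}$. This extra root is the crucial structural input, because it tells us more about what lies in $\mathfrak{m}$ versus $\mathfrak{h}$: since $\alpha-\beta$ spans $\mathfrak{t}\cap\mathfrak{m}$ (this is the defining feature of Case III together with $\dim\mathfrak{t}\cap\mathfrak{m}=1$), the root $\gamma$ lies along $\mathfrak{t}\cap\mathfrak{m}$, so by Lemma \ref{key-lemma-2} the corresponding root plane is contained in $\mathfrak{h}$; and $\mathrm{pr}_\mathfrak{h}(\alpha)=\mathrm{pr}_\mathfrak{h}(\beta)=\alpha'\neq 0$ forces $\mathfrak{g}_{\pm\alpha},\mathfrak{g}_{\pm\beta}\subset\mathfrak{m}$.

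The second step is to localize. Let $\mathfrak{t}'$ be the bi-invariant orthogonal complement in $\mathfrak{t}\cap\mathfrak{h}$ of the span of $\alpha,\beta$ (equivalently of $\alpha'$ together with the relevant short root), let $T'=\exp\mathfrak{t}'$, and pass to $M'=\mathrm{Fix}_o(T',M)$, which by Lemma \ref{preparation-lemma-totally-geodesic} is totally geodesic and, by the fixed-point-set technique recalled at the end of Section 2, is presented as a homogeneous Finsler space $(G'/G'\cap H,F|_{M'})$. The subalgebra $\mathfrak{g}'=\mathrm{Lie}(G')$ is the centralizer of $\mathfrak{t}'$, which contains the $\mathrm{A}_2$ generated by $\alpha,\beta$, together with $\mathfrak{t}\cap\mathfrak{m}$; and $\mathfrak{h}'=\mathfrak{g}'\cap\mathfrak{h}$ contains the torus piece plus the short root plane inside $\mathfrak{h}$ but \emph{not} the full $\mathfrak{g}_{\pm\alpha}\oplus\mathfrak{g}_{\pm\beta}$. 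The upshot is that $(M',F|_{M'})$ is finitely covered by and locally isometric to a low-rank homogeneous space — concretely a product involving $\mathrm{SU}(3)$ (or $\mathrm{SO}(3)\times\mathrm{SO}(3)$, or a Lie group of rank $>1$), matching one of Examples 2--5 of Section \ref{Section-homogeneous-examples}, or a space to which Proposition \ref{prop-rank-bigger-than-one-more} applies. In either case there are arbitrarily long prime closed geodesics on $M'$, hence on $M$, contradicting Assumption (I) via Proposition \ref{prop-finite-cover}.

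I expect the main obstacle to be the bookkeeping in the second step: correctly identifying $\mathfrak{g}'$ and $\mathfrak{h}'$ and verifying that the resulting quotient really falls into one of the forbidden examples, separately for $\theta=\pi/3$ and $\theta=2\pi/3$ and for the different ambient types $\mathfrak{g}$ that admit such a pair with $\dim\mathfrak{t}\cap\mathfrak{m}=1$. In particular one must make sure that $G$ being simple (Lemma \ref{lemma-odd-simple-g}) is compatible with the configuration at all, and that after using the Weyl group and — for $\mathrm{D}_4,\mathrm{E}_6$ — outer automorphisms to normalize the pair, only finitely many genuinely distinct subcases survive; each is then dispatched by exhibiting the explicit rank-one or torus factor. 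A secondary subtlety is ensuring the reversibility hypothesis is used only where needed (it enters through Lemma \ref{ortho-lemma-2-5} if one instead argues via $\mathcal{C}$), but the cleanest route is the fixed-point/example argument above, which does not require it.
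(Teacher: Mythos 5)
There is a genuine gap, and it sits exactly at the step you flag as "bookkeeping": the claim that $M'=\mathrm{Fix}_o(T',M)$ is finitely covered by one of Examples 2--5 or by a space to which Proposition \ref{prop-rank-bigger-than-one-more} applies. It is not. With $\mathfrak{t}'$ the orthogonal complement of $\mathbb{R}\alpha'$ in $\mathfrak{t}\cap\mathfrak{h}$, the centralizer $\mathfrak{g}'=\mathfrak{c}_\mathfrak{g}(\mathfrak{t}')$ is (for $\mathfrak{g}\neq\mathrm{G}_2$) of the form $\mathfrak{t}'\oplus\mathrm{A}_2$, and $\mathfrak{h}'=\mathfrak{t}'\oplus(\mathbb{R}\alpha'+\mathfrak{h}_{\pm\alpha'})$ with an $\mathrm{A}_1$ sitting irregularly inside the $\mathrm{A}_2$; so $M'$ is locally of the type $\mathrm{SU}(3)/\mathrm{SO}(3)$, which has $\mathrm{rk}\,\mathfrak{g}'=\mathrm{rk}\,\mathfrak{h}'+1$ (so Proposition \ref{prop-rank-bigger-than-one-more} does not apply) and is not covered by any of Examples 2--5. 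Indeed the $2\pi/3$ configuration is honestly realized by the pair $(\mathfrak{su}(3),\mathfrak{so}(3))$ (take $\alpha=e_1-e_3$, $\beta=e_3-e_2$), so no contradiction of the "arbitrarily long prime closed geodesics" type via Proposition \ref{prop-finite-cover} can exist; ruling this configuration out genuinely requires Assumption (I) together with the finer machinery. The paper's proof is of a different nature in each subcase: for $\theta_{\alpha,\beta}=\pi/3$ (and $\mathfrak{g}\neq\mathrm{G}_2$) the contradiction is purely algebraic --- a direct $\mathfrak{su}(3)$ matrix computation shows that $\mathbb{R}\alpha'+\mathfrak{h}_{\pm\alpha'}$ cannot be a subalgebra of type $\mathrm{A}_1$ positioned diagonally across $\mathfrak{g}_{\pm\alpha}+\mathfrak{g}_{\pm\beta}$, so the configuration does not exist at all; for $\theta_{\alpha,\beta}=2\pi/3$ one uses that $\alpha+\beta=2\alpha'$ is a root of $\mathfrak{g}$ but not of $\mathfrak{h}$, hence $\mathfrak{g}_{\pm(\alpha+\beta)}\subset\mathfrak{m}$, and then Lemma \ref{key-lemma-3} (which is where reversibility enters, through Lemma \ref{ortho-lemma-2-5}) puts a vector of this root plane into $\mathcal{C}$, contradicting Lemma \ref{key-lemma-4}(1) since $\mathbb{R}(\alpha-\beta)=\mathfrak{t}\cap\mathfrak{m}$ contains no root; the $\mathrm{G}_2$ case is handled by Lemma \ref{key-lemma-4}(2). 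Your remark that reversibility can be avoided is therefore a warning sign rather than a simplification, and note also that when $\mathrm{rk}\,\mathfrak{h}=1$ (e.g. $\mathrm{SU}(3)/\mathrm{SO}(3)$ itself, or the $\mathrm{G}_2$ case) one has $\mathfrak{t}'=0$, so your localization is vacuous.

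There are also errors in your structural first step. Lemma \ref{key-lemma-2} concerns a root lying in $\mathfrak{t}\cap\mathfrak{h}$; it says nothing about a root along $\mathfrak{t}\cap\mathfrak{m}$. In the $\pi/3$ case the extra root $\gamma=\alpha-\beta$ does lie in $\mathfrak{t}\cap\mathfrak{m}$, but then $\mathrm{pr}_\mathfrak{h}(\gamma)=0$ forces $\mathfrak{g}_{\pm\gamma}\subset\hat{\mathfrak{g}}_0\cap\mathfrak{m}$ (recall $\hat{\mathfrak{g}}_0\cap\mathfrak{h}=\mathfrak{t}\cap\mathfrak{h}$), i.e. the root plane lies in $\mathfrak{m}$ and $\dim\hat{\mathfrak{m}}_0=3$ --- the opposite of what you assert. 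In the $2\pi/3$ case the extra root is $\alpha+\beta=2\alpha'\in\mathfrak{t}\cap\mathfrak{h}$, not along $\mathfrak{t}\cap\mathfrak{m}$, and its root plane lies in $\mathfrak{m}$ because $2\alpha'$ cannot be a root of $\mathfrak{h}$ when $\alpha'$ is. These misidentifications feed into the incorrect determination of $\mathfrak{h}'$ and hence of the fixed-point set, so the proposal as written does not close either subcase.
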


\begin{proof}
Firstly, we assume that $\mathfrak{g}\neq \mathrm{G}_2$ and the angle between
$\alpha$ and $\beta$ is $\pi/3$. Then
$$\mathfrak{g}'=\mathbb{R}\alpha+\mathbb{R}\beta+
\mathfrak{g}_{\pm\alpha}+\mathfrak{g}_{\pm\beta}+
\mathfrak{g}_{\pm(\alpha-\beta)}$$
is a subalgebra of type $\mathrm{A}_2$ in $\mathfrak{g}$,
and $\mathfrak{g}'\cap\mathfrak{h}=\mathbb{R}\alpha'+
\mathfrak{h}_{\pm\alpha'}$ is a subalgebra of type $\mathrm{A}_1$.
As in the proof of Lemma 18 in \cite{XD-normal-homogeneous}, direct calculation for matrices in $su(3)$ shows this pair $(\mathfrak{g}',\mathfrak{h}')$ can not exist. This is the contradiction.

Secondly, we assume that $\mathfrak{g}\neq \mathrm{G}_2$ and the angle between
$\alpha$ and $\beta$ is $2\pi/3$. In this case, $\mathfrak{t}\cap\mathfrak{m}=\mathbb{R}(\alpha-\beta)$
does not contain roots of $\mathfrak{g}$.
On the other hand, $2\alpha'=\alpha+\beta$
is a root of $\mathfrak{g}$, but not a root of $\mathfrak{h}$ because $\alpha'$ is. So we have
$\mathfrak{g}_{\pm(\alpha+\beta)}=\hat{\mathfrak{m}}_{\pm2\alpha'}
\subset\mathfrak{m}$. By Lemma \ref{key-lemma-3},
$\mathcal{C}\cap\mathfrak{g}_{\pm(\alpha+\beta)}\neq \emptyset$.
This is a contradiction to (1) of Lemma \ref{key-lemma-4}.

Finally, we assume that $\mathfrak{g}=\mathrm{G}_2$,
and the angle between $\alpha$ and $\beta$ is $\pi/3$ and $2\pi/3$
respectively. We can apply (2) of Lemma \ref{key-lemma-4} and similar argument in the previous paragraph.

This proves the lemma for all possible cases.
\end{proof}

To summarize, any odd dimensional compact connected reversible homogeneous Finsler space $(M,F)$ in Case III-A  can not satisfy Assumption (I). In the discussion below, we only need to consider other angles. Notice that
Case III-A also covers the subcases $\theta_{\alpha,\beta}=\pi/6$ and $\pi/2$ when $\mathfrak{g}=\mathrm{G}_2$.

{\bf Case III-B.} Assume $F$ is reversible. Then Each row in Table \ref{table-2} provides
a subcase for which we can find a contradiction as following.

We can find the root $\delta$ of $\mathfrak{g}$, as listed in Table \ref{table-2}, such that
$\mathfrak{g}_{\pm\delta}\cap\mathcal{C}\neq\emptyset$ by Lemma \ref{key-lemma-3}.
Notice that
$\mathfrak{t}\cap\mathfrak{m}=\mathbb{R}(\alpha-\beta)$ does not contains any root of $\mathfrak{g}$.
By (1) of Lemma \ref{key-lemma-4}, $\mathfrak{g}_{\pm\delta}\cap\mathcal{C}=\emptyset$. This is a contradiction.

\begin{table}
  \centering
  \begin{tabular}{|c|c|c|c|c|}
     \hline
     % after \\: \hline or \cline{col1-col2} \cline{col3-col4} ...
   No.   & $\mathfrak{g}$ & $\alpha$ & $\beta$   & $\delta$  \\ \hline
   1   & $\mathrm{A}_n$, $n>3$ & $e_1-e_3$ & $e_4-e_2$  & $e_1-e_5$ \\ \hline
   2   & $\mathrm{B}_n$, $n>3$ & $e_1+e_2$ & $-e_3-e_4$ & $e_1$     \\ \hline
   3   & $\mathrm{B}_n$, $n>3$ & $e_1+e_2$ & $-e_3$ & $e_3+e_4$\\ \hline
   4   & $\mathrm{B}_n$, $n>1$ & $e_1+e_2$ & $-e_1$     & $e_2-e_1$ \\ \hline
   5   & $\mathrm{C}_n$, $n>2$ & $2e_1$    & $-e_2-e_3$ & $2e_3$    \\ \hline
   6   & $\mathrm{C}_n$, $n>3$ & $e_1+e_2$ & $-e_3-e_4$ & $2e_4$    \\ \hline
   7   & $\mathrm{C}_n$, $n>2$ & $2e_1$    & $-e_1-e_2$ & $2e_3$    \\ \hline
   8   & $\mathrm{D}_n$, $n>4$ & $e_1+e_2$ & $-e_3-e_4$ & $e_4+e_5$ \\ \hline
   9   & $\mathrm{E}_6$        & $e_1+e_2$ & $e_2-e_1$  & $\frac12(e_1+\cdots+e_5-\sqrt{3}e_6)$ \\ \hline
   10   & $\mathrm{E}_7$        & $e_1+e_2$ & $e_2-e_1$  & $\frac12(e_1+\cdots+e_6+\sqrt{2}e_7)$ \\ \hline
   11  & $\mathrm{E}_8$        & $e_1+e_2$ & $e_2-e_1$  & $\frac12(e_1+\cdots+e_8)$ \\ \hline
   12  & $\mathrm{E}_8$        & $e_1+e_2$ & $-e_3-e_4$ & $\frac12(-e_1-e_2+e_3+\cdots+e_8)$ \\  \hline
   13  & $\mathrm{F}_4$        & $e_1+e_2$ & $-e_3$     & $\frac12(e_1-e_2-e_3+e_4)$ \\ \hline
   14  & $\mathrm{F}_4$        & $e_1+e_2$ & $-e_2$     & $\frac12(e_1+e_2+e_3+e_4)$ \\ \hline
   15  & $\mathrm{G}_2$        & $-\frac32e_1+\frac{\sqrt{3}}2e_2$ & $e_1$ & $\sqrt{3}e_2$ \\ \hline
   \end{tabular}
  \caption{Subcases in Case III-B}\label{table-2}
\end{table}

To summarize, any compact connected reversible homogeneous Finsler space $(M,F)$ in Case III-B  can not satisfy Assumption (I).

{\bf Case III-C.} Each row of Table \ref{table-3} provides a subcase for which we can find the contradiction as following.

\begin{table}
  \centering
  \begin{tabular}{|c|c|c|c|c|}
    \hline
    % after \\: \hline or \cline{col1-col2} \cline{col3-col4} ...
    No.  & $\mathfrak{g}$ & $\alpha$ & $\beta$ & Roots of $\mathfrak{g}'$ \\ \hline
    1   & $\mathrm{B}_n$, $n>1$   & $e_1$    & $e_2$   & $\pm e_1$, $\pm e_2$, $\pm e_1\pm e_2$ \\ \hline
    2   & $\mathrm{C}_n$, $n>2$   & $e_1+e_2$& $e_2-e_1$ & $\pm e_1\pm e_2$, $\pm 2e_1$, $\pm 2e_2$ \\ \hline
    3   & $\mathrm{F}_4$          & $e_1$    & $e_2$     & $\pm e_1$, $\pm e_2$, $\pm e_1\pm e_2$ \\ \hline
  \end{tabular}
  \caption{Subcases in Case III-C}\label{table-3}
\end{table}

In the Lie algebra $\mathfrak{g}$, we can find a regular subalgebra $\mathfrak{g}'=\mathrm{C}_2$, which roots are listed in
Table \ref{table-3}, such that $\mathfrak{g}'\cap\mathfrak{h}=\mathrm{A}_1$, $\alpha'=\mathrm{pr}_\mathfrak{h}(\alpha)$ is half of a long root of $\mathfrak{g}'$, and
$\mathfrak{h}_{\pm\alpha'}\subset\mathfrak{g}'\cap\mathfrak{h}$ is contained in the sum of the
two root planes for short roots of $\mathfrak{g}'$. We can identify $\mathfrak{g}'$ with
the matrix Lie algebra $\mathrm{sp}(2)$, such that $\mathfrak{g}'\cap\mathfrak{h}$ is linearly spanned by
\begin{equation}\label{008}
H=\left(
      \begin{array}{cc}
        \mathbf{i} & 0 \\
        0 & 0 \\
      \end{array}
    \right),\quad
X=\left(
    \begin{array}{cc}
      0 & a \\
      -\bar{a} & 0 \\
    \end{array}
  \right),\quad\mbox{and}\quad
Y=[H,X]=\left(
          \begin{array}{cc}
            0 & \mathbf{i}a \\
            \bar{a}\mathbf{i} & 0 \\
          \end{array}
        \right),
\end{equation}
where $a\in\mathbb{H}$ is nonzero.
But direct calculation shows,
$$[X,Y]=\left(
          \begin{array}{cc}
            2|a|^2\mathbf{i} & 0 \\
            0 & -2\bar{a}\mathbf{i}a \\
          \end{array}
        \right)\notin\mathfrak{g}'\cap\mathfrak{h}.
$$
This is a contradiction.

To summarize, Case III-C can not happen.

{\bf Case III-D.} Each row of Table \ref{table-4} provides a subcase for which we can prove $(M,F)$
is locally isometric to a Riemannnian symmetric sphere.

\begin{table}
  \centering
  \begin{tabular}{|c|c|c|c|c|c|}
     \hline
     % after \\: \hline or \cline{col1-col2} \cline{col3-col4} ...
     No.  & $\mathfrak{g}$ & $\alpha$ & $\beta$ & $\mathfrak{h}$ & Universal cover of $G/H$ \\ \hline
 %    1 & $\mathrm{A}_3$ & $e_1-e_4$ & $e_3-e_2$ & $\mathrm{C}_2$ & $\mathrm{S}^5=\mathrm{SU}(4)/\mathrm{Sp}(2)$ \\ \hline
     1 & $\mathrm{B}_3$ & $e_1+e_2$ & $-e_3$ & $\mathrm{G}_2$ & $\mathrm{S}^7=\mathrm{Spin}(7)/\mathrm{G}_2$ \\ \hline
     2 & $\mathrm{D}_n$, $n>2$ & $e_1+e_2$ & $e_2-e_1$ & $\mathrm{B}_{n-1}$ & $\mathrm{S}^{2n-1}=\mathrm{SO}(2n)/\mathrm{SO}(2n-1)$ \\ \hline
   \end{tabular}
  \caption{Subcases in Case III-D}\label{table-4}
\end{table}

We take No.  2 in Table \ref{table-4} as the example. The argument for the other is similar.

By Lemma \ref{key-lemma-2}, $\pm e_i\pm e_j$ with $1\leq i<j\leq n$ are roots of $\mathfrak{h}$,
and $\mathfrak{h}_{\pm(e_i\pm e_j)}=\mathfrak{g}_{\pm (e_i\pm e_j)}$.
Take any nonzero $u\in\mathfrak{g}_{\pm (e_2-e_i)}$ for $i>2$, $\mathrm{ad}(u)=[u,\cdot]$ defines
a linear isomorphism
$$\mathrm{ad}(u):\hat{\mathfrak{g}}_{\pm e_2}=\mathfrak{g}_{\pm(e_1+e_2)}+\mathfrak{g}_{\pm(e_2-e_1)}
\rightarrow\mathfrak{g}_{\pm(e_1+e_i)}+\mathfrak{g}_{\pm(e_i-e_1)}=
\hat{\mathfrak{g}}_{\pm e_i},
$$
and because $u\in\mathfrak{h}$, $\mathrm{ad}(u)$ maps $\hat{\mathfrak{g}}_{\pm e_2}\cap\mathfrak{h}$ and $\hat{\mathfrak{g}}_{\pm e_2}\cap\mathfrak{m}$ to $\hat{\mathfrak{g}}_{\pm e_i}\cap\mathfrak{h}$ and $\hat{\mathfrak{g}}_{\pm e_i}\cap\mathfrak{m}$ respectively. This implies that $\pm e_i$'s for all $i>1$
are roots of $\mathfrak{h}$.

To summarize, we have found all the roots of $\mathfrak{h}$, i.e.
$\pm e_i$ for all $i>1$ and $\pm e_i\pm e_j$ for all $1<i<j\leq n$.
So $\mathfrak{h}=\mathrm{B}_{n-1}$. The argument in Subcase 1, Subection 6.3 in \cite{XD-normal-homogeneous}
shows this $\mathfrak{h}$ is unique up to an $\mathrm{Ad}(G)$-action. So
$(G/H,F)$ is locally isometric to
the Riemannian symmetric sphere $\mathrm{SO}(2n)/\mathrm{SO}(2n-1)$.

In \cite{XD-normal-homogeneous} and \cite{XD-towards-odd}, there are detailed discussion for
the uniqueness of $\mathfrak{h}=\mathrm{G}_2$ for No. 1 in Table \ref{table-4}.

To summarize, any compact connected homogeneous Finsler space in Case III-D is locally isometric to a Riemannian
symmetric sphere. By Proposition \ref{prop-classification-local}, it satisfies Assumption (I) if and only if $M$ is a Riemannian symmetric $\mathrm{S}^{2n-1}$ or a Riemannian symmetric $\mathbb{R}\mathrm{P}^{2n-1}$.

{\bf Case III-E.} Each row of Table \ref{table-5} provides
a subcase for which we can change $\mathfrak{h}$ by a suitable
 $\mathrm{Ad}(G)$-action, to make it regular in $\mathfrak{g}$.
Then Proposition \ref{prop-Case-I} provides the contradiction.

\begin{table}
  \centering
  \begin{tabular}{|c|c|c|c|c|c|}
     \hline
     % after \\: \hline or \cline{col1-col2} \cline{col3-col4} ...
     No.  & $\mathfrak{g}$ & $\alpha$ & $\beta$ & Roots of $\mathfrak{h}$ \\ \hline
     1 & $\mathrm{B}_n$, $n>1$ & $e_1+e_2$ & $e_2-e_1$ &
     $\pm e_i$, $\forall i>1$; $\pm e_i\pm e_j, \forall 1<i<j\leq n$\\ \hline
     2 & $\mathrm{B}_n$, $n>1$ & $e_1+e_2$ & $e_2$ &
     $\pm e_i$, $\forall i>1$; $\pm e_i\pm e_j, \forall 1<i<j\leq n$\\ \hline
     3 & $\mathrm{C}_n$, $n>2$ & $2e_1$ & $e_1+e_2$ &
     $\pm (e_1+e_2)$; $ \pm e_i\pm e_j, \forall 3\leq i\leq n$; $\pm 2e_2, \forall i\geq 3$\\
     \hline
     4 & $\mathrm{C}_n$, $n>2$ & $2e_1$ & $2e_2$ &
     $\pm (e_1+e_2)$; $ \pm e_i\pm e_j, \forall 3\leq i\leq n$; $\pm 2e_2, \forall i\geq 3$\\
     \hline
     5 & $\mathrm{F}_4$ & $e_1+e_2$ & $e_2$ &
     $\pm e_i, \forall i>1$; $\pm e_i\pm e_j, \forall 1<i<j\leq 4$\\ \hline
     6 & $\mathrm{F}_4$ & $e_1+e_2$ & $e_2-e_1$  &
     $\pm e_i, \forall i>1$; $\pm e_i\pm e_j, \forall 1<i<j\leq 4$\\ \hline
   \end{tabular}
  \caption{Subcases in Case III-E}\label{table-5}
\end{table}

We take No. 1 in Table \ref{table-5} as the example. The argument for the other subcases are similar.

By Lemma \ref{key-lemma-2}, $\pm e_i\pm e_j$ when $1<i<j\leq n$ is a root of $\mathfrak{h}$.
In Subcase 1, Subsection 6.4 of \cite{XD-normal-homogeneous}, or Subcase 1, Subsection 4.3 of
\cite{XD-towards-odd}, it has been shown by direct calculation that,
we can change $\mathfrak{h}$ by a suitable $\mathrm{Ad}(G')$-action, where $G'$ is the
subgroup generated by the subalgebra
$\hat{\mathfrak{m}}_0=\mathbb{R}e_1+\mathfrak{g}_{\pm e_1}$,
such that we have $\mathfrak{g}_{\pm e_2}\subset\mathfrak{h}$.
Because $[\hat{\mathfrak{m}}_0,\mathfrak{g}_{\pm(e_i-e_j)}]=0$
when $1<i<j\leq n$,
we still have $\mathfrak{g}_{\pm(e_i-e_j)}\subset\mathfrak{h}$
for all $1<i<j\leq n$.

Take any nonzero $u\in\mathfrak{g}_{\pm (e_2-e_i)}$ for $i>2$, $\mathrm{ad}(u)=[u,\cdot]$ defines
a linear isomorphism
$$\mathrm{ad}(u):\hat{\mathfrak{g}}_{\pm e_2}=\mathfrak{g}_{\pm(e_1+e_2)}+\mathfrak{g}_{\pm e_2}+
\mathfrak{g}_{\pm(e_2-e_1)}
\rightarrow\mathfrak{g}_{\pm(e_1+e_i)}+\mathfrak{g}_{\pm e_i}+\mathfrak{g}_{\pm(e_i-e_1)}=
\hat{\mathfrak{g}}_{\pm e_i},
$$
and because $u\in\mathfrak{h}$, $\mathrm{ad}(u)$ maps $\mathfrak{g}_{\pm e_2}=\hat{\mathfrak{g}}_{\pm e_2}\cap\mathfrak{h}$ onto $\mathfrak{g}_{\pm e_i}\subset\hat{\mathfrak{g}}_{\pm e_i}\cap\mathfrak{h}$.
So after the $\mathrm{Ad}(G')$-action, $\pm e_i$'s for all $i>1$ are roots of $\mathfrak{h}$,
and $\mathfrak{g}_{\pm e_i}=\mathfrak{h}_{\pm e_i}$.

Until now, we have shown that all root planes of $\mathfrak{h}$ are root planes of $\mathfrak{g}$,
i.e. $\mathfrak{h}$ becomes regular in $\mathfrak{g}$ after a suitable $\mathrm{Ad}(G')$-action. Then
we can get the contradiction from Proposition \ref{prop-Case-I}.

To summarize, any odd dimensional compact connected homogeneous Finsler space $(M,F)$ in Case III-E  can not satisfy Assumption (I).

All above case by case discussions can be summarized as the following proposition.

\begin{proposition}\label{prop-Case-III}
If $(M,F)=(G/H,F)$ with $G=I_0(M,F)$ is an odd dimensional
compact connected
reversible homogeneous Finsler space in Case III
satisfying Assumption (I), then $(M,F)$ is a Riemannian symmetric
$\mathrm{S}^n$ or a Riemannian symmetric $\mathbb{R}\mathrm{P}^n$.
\end{proposition}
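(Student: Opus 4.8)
The statement is the summary of the subcase analysis carried out in Cases III-A through III-E, so the plan is to explain why those five groups exhaust Case III and why each yields the asserted dichotomy. By Lemma \ref{lemma-odd-simple-g} the group $G$ is simple, hence $\mathfrak{g}$ is one of $\mathrm{A}_n,\mathrm{B}_n,\mathrm{C}_n,\mathrm{D}_n,\mathrm{E}_6,\mathrm{E}_7,\mathrm{E}_8,\mathrm{F}_4,\mathrm{G}_2$. Fix the Case III data: distinct roots $\alpha,\beta$ of $\mathfrak{g}$ with $\mathrm{pr}_\mathfrak{h}(\alpha)=\mathrm{pr}_\mathfrak{h}(\beta)=\alpha'$, where $\alpha'$ is a root of $\mathfrak{h}$ but not of $\mathfrak{g}$. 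The first step is to normalise this pair: acting by the Weyl group of $\mathfrak{g}$, and, when $\mathfrak{g}=\mathrm{D}_4$ or $\mathrm{E}_6$, by an outer automorphism moving $\{e_1+e_2,-e_3-e_4\}$ to $\{e_1+e_2,e_2-e_1\}$, leaves only finitely many representatives, in the notation of Table \ref{table-1}. For each we record the angle $\theta_{\alpha,\beta}\in\{\pi/6,\pi/4,\pi/3,\pi/2,2\pi/3,3\pi/4,5\pi/6\}$ and whether $\mathfrak{t}\cap\mathfrak{m}=\mathbb{R}(\alpha-\beta)$ contains a root of $\mathfrak{g}$; this data dictates which of the five treatments applies.

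Then I would run through the five groups. For Case III-A ($\theta_{\alpha,\beta}=\pi/3$ or $2\pi/3$) the obstruction is Lemma \ref{lemma-no-pi-over-3-or-2-pi-over-3}: for $\pi/3$ one recognises a type-$\mathrm{A}_2$ subalgebra $\mathfrak{g}'$ with $\mathfrak{g}'\cap\mathfrak{h}$ of type $\mathrm{A}_1$ and rules this out by an $\mathrm{su}(3)$ calculation; for $2\pi/3$ one has $\mathfrak{g}_{\pm(\alpha+\beta)}\subset\mathfrak{m}$, and Lemma \ref{key-lemma-3} gives $\mathcal{C}\cap\mathfrak{g}_{\pm(\alpha+\beta)}\neq\emptyset$, against Lemma \ref{key-lemma-4}(1), with the $\mathrm{G}_2$ variant using Lemma \ref{key-lemma-4}(2). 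For Case III-B I would exhibit, as in Table \ref{table-2}, a root $\delta$ with $\mathfrak{g}_{\pm\delta}\subset\mathfrak{m}$ satisfying the hypotheses of Lemma \ref{key-lemma-3}, so $\mathcal{C}\cap\mathfrak{g}_{\pm\delta}\neq\emptyset$ while $\mathfrak{t}\cap\mathfrak{m}$ contains no root, contradicting Lemma \ref{key-lemma-4}(1). For Case III-C I would produce a regular subalgebra $\mathfrak{g}'\cong\mathrm{sp}(2)$ with $\mathfrak{g}'\cap\mathfrak{h}$ spanned by matrices $H,X,Y=[H,X]$ as in (\ref{008}) and compute that $[X,Y]\notin\mathfrak{g}'\cap\mathfrak{h}$, which is absurd since $\mathfrak{g}'\cap\mathfrak{h}$ is a subalgebra.

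For the two groups that do not end in a contradiction: in Case III-D I would determine all root planes of $\mathfrak{h}$, using Lemma \ref{key-lemma-2} to force certain root planes of $\mathfrak{g}$ into $\mathfrak{h}$ and then transporting $\hat{\mathfrak{g}}_{\pm e_2}\cap\mathfrak{h}$ onto $\hat{\mathfrak{g}}_{\pm e_i}\cap\mathfrak{h}$ by $\mathrm{ad}(u)$ with $u$ in a root plane already inside $\mathfrak{h}$; this pins $\mathfrak{h}$ down to $\mathrm{G}_2\subset\mathrm{B}_3$ or $\mathrm{B}_{n-1}\subset\mathrm{D}_n$, both unique up to $\mathrm{Ad}(G)$, so $(M,F)$ is locally isometric to a Riemannian symmetric sphere and Proposition \ref{prop-classification-local} finishes. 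In Case III-E I would use the $\mathrm{Ad}(G')$-action of the subgroup $G'$ generated by $\hat{\mathfrak{m}}_0=\mathbb{R}e_1+\mathfrak{g}_{\pm e_1}$ to move $\mathfrak{h}$ so that every root plane of $\mathfrak{h}$ becomes a root plane of $\mathfrak{g}$, i.e.\ $\mathfrak{h}$ regular; then $(M,F)$ is in Case I and Proposition \ref{prop-Case-I} supplies a second orbit, contradicting Assumption (I). Assembling the five groups, any space in Case III either violates Assumption (I) or is a Riemannian symmetric $\mathrm{S}^n$ or $\mathbb{R}\mathrm{P}^n$.

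The main obstacle, the part that needs genuine care rather than mere invocation of the earlier lemmas, is verifying that the five groups are \emph{exhaustive}: the complete enumeration, for each simple type $\mathrm{A}_n$ through $\mathrm{G}_2$, of the Weyl-orbit representatives of pairs $\{\alpha,\beta\}$ compatible with the Case III hypothesis, and the check that every surviving representative appears as a row of one of Tables \ref{table-2}--\ref{table-5} or is covered by Case III-A. Keeping this bookkeeping complete and non-redundant, especially for $\mathrm{F}_4$ and the $\mathrm{E}$-series where several short/long configurations occur, is where the real work lies; the reversibility hypothesis enters only through the orthogonality Lemma \ref{ortho-lemma-2-5}, which feeds Lemma \ref{key-lemma-3} used throughout Cases III-A and III-B.
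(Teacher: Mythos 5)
Your proposal follows essentially the same route as the paper: the paper's proof of Proposition \ref{prop-Case-III} is precisely the verification (recorded in Table \ref{table-6}) that, after reduction by the Weyl group and the outer automorphisms of $\mathrm{D}_4$ and $\mathrm{E}_6$, every admissible pair $\{\alpha,\beta\}$ for simple $\mathfrak{g}$ falls into one of Cases III-A through III-E, which you invoke with the same key lemmas and the same conclusions. Your identification of the exhaustive enumeration as the real content, and of reversibility entering via Lemma \ref{ortho-lemma-2-5} and Lemma \ref{key-lemma-3}, matches the paper.
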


\begin{proof}
Table \ref{table-6} lists all the possible subcases
of compact simple $\mathfrak{g}$, the roots $\alpha$ and $\beta$
in Case III (up to Weyl group action and outer automorphisms of $\mathrm{D}_4$ and $\mathrm{E}_6$) with $|\alpha|_{\mathrm{bi}}\geq|\beta|_{\mathrm{bi}}$ and  $\theta_{\alpha,\beta}\notin\{\pi/3,2\pi/3\}$, and where they are discussed. When $\theta_{\alpha,\beta}=\pi/3$ or $2\pi/3$, the subcase belongs to Case III-A.
\end{proof}
\begin{table}
  \centering
  \begin{tabular}{|c|c|c|c|c|c|}
     \hline
     % after \\: \hline or \cline{col1-col2} \cline{col3-col4} ...
     No. & $\mathfrak{g}$ & $\alpha$ & $\beta$ & $\theta_{\alpha,\beta}$ & Subcase in \\ \hline
%     1 & $\mathrm{A}_3$ & $e_1-e_4$ & $e_3-e_2$ & $\pi/2$ &   III-D \\ \hline
     1 & $\mathrm{A}_n$, $n>3$ & $e_1-e_4$ & $e_3-e_2$ & $\pi/2$ &   III-B \\ \hline
     2 & $\mathrm{B}_n$, $n>1$ & $e_1+e_2$ & $e_2$ & $\pi/4$ &   III-E \\ \hline
     3 & $\mathrm{B}_n$, $n>1$ & $e_1+e_2$ & $e_2-e_1$ & $\pi/2$ &   III-E \\ \hline
     4 & $\mathrm{B}_n$, $n>3$ & $e_1+e_2$ & $-e_3-e_4$ & $\pi/2$ &   III-B \\ \hline
     5 & $\mathrm{B}_3$ & $e_1+e_2$ & $-e_3$ & $\pi/2$ &   III-D \\ \hline
     6 & $\mathrm{B}_n$, $n>3$ & $e_1+e_2$ & $-e_3$ & $\pi/2$ &   III-B\\ \hline
     7 & $\mathrm{B}_n$, $n>1$ & $e_1$ & $e_2$ & $\pi/2$ &   III-C \\ \hline
     8 & $\mathrm{B}_n$, $n>1$ & $e_1+e_2$ & $-e_1$ & $3\pi/4$ &   III-B \\ \hline
     9 & $\mathrm{C}_n$, $n>2$ & $2e_1$ & $e_1+e_2$ & $\pi/4$ & III-E \\ \hline
     10 & $\mathrm{C}_n$, $n>2$ & $2e_1$ & $2e_2$ & $\pi/2$ & III-E \\ \hline
     11 & $\mathrm{C}_n$, $n>2$ & $2e_1$ & $-e_2-e_3$ & $\pi/2$ & III-B \\ \hline
     12 & $\mathrm{C}_n$, $n>2$ & $e_1+e_2$ & $e_2-e_1$ & $\pi/2$ & III-C \\ \hline
     13 & $\mathrm{C}_n$, $n>3$ & $e_1+e_2$ & $-e_3-e_4$ & $\pi/2$ & III-B \\ \hline
     14 & $\mathrm{C}_n$, $n>2$ & $2e_1$ & $-e_1-e_2$ & $3\pi/4$ & III-B \\ \hline
     15 & $\mathrm{D}_n$, $n>2$ & $e_1+e_2$ & $e_2-e_1$ & $\pi/2$ & III-D \\ \hline
     16 & $\mathrm{D}_n$, $n>4$ & $e_1+e_2$ & $-e_3-e_4$ & $\pi/2$ & III-B \\ \hline
     17 & $\mathrm{E}_6$ & $e_1+e_2$ & $e_2-e_1$ & $\pi/2$ & III-B \\ \hline
     18 & $\mathrm{E}_7$ & $e_1+e_2$ & $e_2-e_1$ & $\pi/2$ & III-B \\ \hline
     19 & $\mathrm{E}_8$ & $e_1+e_2$ & $e_2-e_1$ & $\pi/2$ & III-B \\ \hline
     20 & $\mathrm{E}_8$ & $e_1+e_2$ & $-e_3-e_4$ & $\pi/2$ & III-B \\ \hline
     21 & $\mathrm{F}_4$ & $e_1+e_2$ & $e_2$ & $\pi/4$ & III-E \\ \hline
     22 & $\mathrm{F}_4$ & $e_1+e_2$ & $e_2-e_1$ & $\pi/2$ & III-E \\ \hline
     23 & $\mathrm{F}_4$ & $e_1+e_2$ & $-e_3$ & $\pi/2$ & III-B \\ \hline
     24 & $\mathrm{F}_4$ & $e_1$ & $e_2$ & $\pi/2$ & III-C \\ \hline
     25 & $\mathrm{F}_4$ & $e_1+e_2$ & $-e_2$ & $3\pi/4$ & III-B \\ \hline
     26 & $\mathrm{G}_2$ & $\frac32e_1+\frac{\sqrt{3}}2e_2$
     & $e_1$ & $\pi/6$ & III-A \\ \hline
     27 & $\mathrm{G}_2$ & $\frac32e_1+\frac{\sqrt{3}}2e_2$
     & $-\frac12e_1+\frac{\sqrt{3}}2e_2$ & $\pi/2$ & III-A \\ \hline
     28 & $\mathrm{G}_2$ & $-\frac32e_1+\frac{\sqrt{3}}2e_2$ & $e_1$ &
      $5\pi/6$ & III-B \\
     \hline
   \end{tabular}
  \caption{Subcases in Case III with $\theta_{\alpha,\beta}\neq\pi/3$ or $2\pi/3$}\label{table-6}
\end{table}

Theorem \ref{classification-thm-2} is just a summation of
Proposition \ref{prop-Case-I},
Proposition \ref{prop-Case-II} and Proposition \ref{prop-Case-III}.

At the end, we prove the following proposition as an application.

\begin{proposition}\label{prop-final}
Any homogeneous Finsler sphere $(M,F)$ has only one orbit of prime
closed geodesics only when it is a Riemannian symmetric sphere.
\end{proposition}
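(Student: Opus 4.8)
The plan is to combine the even-dimensional classification theorem with the classification of homogeneous spheres. When $\dim M$ is even, Theorem~\ref{classification-thm-1} already identifies $(M,F)$ as a compact rank-one Riemannian symmetric space, and the only even-dimensional sphere among these is the Riemannian symmetric $\mathrm{S}^{2n}$, so we are done. When $\dim M=n$ is odd, write $M=\mathrm{S}^n=G/H$ with $G=I_0(M,F)$ compact and connected; since $\mathrm{S}^n$ is simply connected, $H$ is connected, and by the Montgomery--Samelson and Borel classification of compact connected transitive actions on spheres, $(G,H)$ is one of $(\mathrm{SO}(2m),\mathrm{SO}(2m-1))$, $(\mathrm{SU}(m),\mathrm{SU}(m-1))$, $(\mathrm{U}(m),\mathrm{U}(m-1))$, $(\mathrm{Sp}(m),\mathrm{Sp}(m-1))$, $(\mathrm{Sp}(m)\mathrm{Sp}(1),\mathrm{Sp}(m-1)\mathrm{Sp}(1))$, $(\mathrm{Sp}(m)\mathrm{U}(1),\mathrm{Sp}(m-1)\mathrm{U}(1))$, $(\mathrm{Spin}(7),\mathrm{G}_2)$ on $\mathrm{S}^7$, or $(\mathrm{Spin}(9),\mathrm{Spin}(7))$ on $\mathrm{S}^{15}$. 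In the first case the metric is round, so $(M,F)$ is a Riemannian symmetric sphere, as desired. Note that none of the arguments below will use reversibility of $F$.

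For $(\mathrm{Spin}(7),\mathrm{G}_2)$ on $\mathrm{S}^7$ and for $\mathrm{S}^3=\mathrm{Sp}(1)\mathrm{Sp}(1)/\Delta\mathrm{Sp}(1)$ the isotropy representation on $\mathfrak{m}=T_oM$ is irreducible (the $7$-dimensional $\mathrm{G}_2$-module, respectively $\mathrm{Im}\,\mathbb{H}$ under $\mathrm{SO}(3)$), and $H$ is transitive on its unit sphere (because $\mathrm{G}_2/\mathrm{SU}(3)=\mathrm{S}^6$, respectively $\mathrm{SO}(3)$ is transitive on $\mathrm{S}^2$); hence every invariant Minkowski norm on $\mathfrak{m}$ is Euclidean, $F$ is round, and $(M,F)$ is again a Riemannian symmetric sphere. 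For the four families $(\mathrm{SU}(m),\mathrm{SU}(m-1))$, $(\mathrm{U}(m),\mathrm{U}(m-1))$, $(\mathrm{Sp}(m),\mathrm{Sp}(m-1))$ and $(\mathrm{Sp}(m)\mathrm{U}(1),\mathrm{Sp}(m-1)\mathrm{U}(1))$ the plan is to contradict Assumption~(I): each such pair satisfies $\mathrm{rk}\,G=\mathrm{rk}\,H+1$, so Lemma~\ref{key-lemma-1} produces two distinct vectors $u_1\neq u_2$ in $\hat{\mathfrak{m}}_0\cap\mathcal{C}$, while a direct inspection of each case shows $\hat{\mathfrak{m}}_0$ is a nonzero subspace on which $\mathrm{Ad}(H)$ acts trivially (it sits inside the centralizer of $\mathfrak{h}$ in $\mathfrak{g}$, and is either a line acted on by the connected group $H$, or the image of an $\mathfrak{sp}(1)$ centralized by $H$). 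Since $\mathcal{C}$ is a single $\mathrm{Ad}(H)$-orbit, $u_1,u_2\in\mathcal{C}$ then forces $u_2\in\mathrm{Ad}(H)u_1=\{u_1\}$, i.e. $u_1=u_2$, a contradiction.

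The main obstacle is the two remaining families, $\mathrm{S}^{4m-1}=\mathrm{Sp}(m)\mathrm{Sp}(1)/\mathrm{Sp}(m-1)\mathrm{Sp}(1)$ with $m\geq2$ and $\mathrm{S}^{15}=\mathrm{Spin}(9)/\mathrm{Spin}(7)$, because there $\hat{\mathfrak{m}}_0$ is no longer centralized by $\mathfrak{h}$ (the extra $\mathrm{Sp}(1)$-factor, respectively $\mathrm{Spin}(7)$, moves it), so the previous argument breaks down. Here the isotropy representation splits into two inequivalent irreducible summands $\mathfrak{m}=\mathfrak{m}'\oplus\mathfrak{m}''$: for the quaternionic family $\mathfrak{m}'=\mathrm{Im}\,\mathbb{H}$ (the Hopf-fibre direction) and $\mathfrak{m}''=\mathbb{H}^{m-1}$, and for $\mathrm{Spin}(9)/\mathrm{Spin}(7)$ one has $\mathfrak{m}'$ the $7$-dimensional vector summand and $\mathfrak{m}''$ the $8$-dimensional spin summand of $\mathfrak{m}=T_o\mathrm{S}^{15}$. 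In both cases $\hat{\mathfrak{m}}_0\subseteq\mathfrak{m}'$ (it is the zero weight line of $\mathfrak{m}'$ for a maximal torus of $H$), so Lemma~\ref{key-lemma-1} and Assumption~(I) force $\mathcal{C}=\mathrm{Ad}(H)u_1\subseteq\mathfrak{m}'$; that is, every closed geodesic through $o$ is tangent to $\mathfrak{m}'$. To contradict this I would exhibit a closed geodesic tangent to $\mathfrak{m}''$: take $K\subseteq H$ to be a suitable diagonal $\mathrm{Sp}(1)$-subgroup of $\mathrm{Sp}(m-1)\mathrm{Sp}(1)$ fixing a unit vector of the $\mathbb{H}^{m-1}$-summand, respectively $K=\mathrm{G}_2\subseteq\mathrm{Spin}(7)$; a computation of the centralizer of $K$ in $\mathfrak{g}$ shows it is one-dimensional, so by Lemma~\ref{preparation-lemma-totally-geodesic} the component $\mathrm{Fix}_o(K,M)$ is a one-dimensional connected compact totally geodesic submanifold, hence a closed geodesic through $o$, whose unit tangent at $o$ is the $K$-fixed line of $\mathfrak{m}$ --- and this line lies in $\mathfrak{m}''$ (the real line in the coordinate $\mathbb{H}$ for the quaternionic family; the $\mathrm{G}_2$-fixed spinor line inside the spin summand for $\mathrm{Spin}(9)/\mathrm{Spin}(7)$). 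This produces a vector of $\mathcal{C}$ inside $\mathfrak{m}''$, contradicting $\mathcal{C}\subseteq\mathfrak{m}'$. Assembling all cases, the only survivors are the round spheres, i.e. the Riemannian symmetric spheres, which proves the proposition.
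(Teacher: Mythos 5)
Your proposal is correct in substance, but for the decisive cases it takes a genuinely different route from the paper's. The paper also starts from the Borel/Montgomery--Samelson list, but then disposes of $\mathrm{U}(n)$ and $\mathrm{Sp}(n)\mathrm{U}(1)$ in one line by Lemma~\ref{lemma-no-abel-factor} (Assumption (I) forces $G=I_0(M,F)$ to be semisimple), of $\mathrm{Sp}(n)/\mathrm{Sp}(n-1)$ by Proposition~\ref{prop-Case-I}, and of the two hard families $\mathrm{Sp}(n)\mathrm{Sp}(1)/\mathrm{Sp}(n-1)\mathrm{Sp}(1)$ and $\mathrm{Spin}(9)/\mathrm{Spin}(7)$ by observing that these coset spaces only admit reversible invariant metrics and then quoting the odd-dimensional classification machinery (Proposition~\ref{prop-Case-II} and the Case III-B discussion). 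You avoid reversibility and Sections 7--8 altogether: for the families $\mathrm{SU}(m)$, $\mathrm{U}(m)$, $\mathrm{Sp}(m)$, $\mathrm{Sp}(m)\mathrm{U}(1)$ you use that $H$ is connected and acts trivially on $\hat{\mathfrak{m}}_0$, so the two vectors supplied by Lemma~\ref{key-lemma-1} already contradict the fact that $\mathcal{C}$ is a single $\mathrm{Ad}(H)$-orbit; and for $\mathrm{Sp}(m)\mathrm{Sp}(1)$ and $\mathrm{Spin}(9)$ you produce a closed geodesic tangent to the ``wrong'' isotropy summand $\mathfrak{m}''$ as a one-dimensional component of a fixed point set (a diagonal $\mathrm{Sp}(1)$, resp.\ $\mathrm{G}_2\subset\mathrm{Spin}(7)$), contradicting $\mathcal{C}=\mathrm{Ad}(H)u_1\subset\mathfrak{m}'$. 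These constructions do work: the $K$-fixed subspace of $\mathfrak{m}$ is the real line of a quaternionic coordinate, resp.\ the $\mathrm{G}_2$-fixed spinor line, so the fixed component through $o$ is a circle, totally geodesic by Lemma~\ref{preparation-lemma-totally-geodesic}, hence a closed geodesic with direction in $\mathfrak{m}''$. Your route is more elementary and self-contained (it also explicitly covers $\mathrm{SU}(m)$ and $\mathrm{Spin}(7)$ on $S^7$, which the paper silently absorbs into the hypothesis $G=I_0(M,F)$), while the paper's route is shorter because it recycles results already proved.

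Two small repairs are needed. First, for the quaternionic family with $m\geq 3$ the centralizer of your diagonal $K\cong\mathrm{Sp}(1)$ in $\mathfrak{g}$ is $\mathbb{R}\oplus\mathfrak{sp}(m-2)$, not one-dimensional; what is one-dimensional is the $K$-fixed subspace $\mathfrak{m}^K=T_o\mathrm{Fix}_o(K,M)$, and that is the statement you actually need (for $\mathrm{Spin}(9)/\mathrm{Spin}(7)$ your claim about the centralizer is literally correct). Second, for $G=\mathrm{U}(m)$ and $G=\mathrm{Sp}(m)\mathrm{U}(1)$ the proof of Lemma~\ref{key-lemma-1} uses that $\mathfrak{t}\cap\mathfrak{m}$ generates a circle subgroup; when $G$ has a positive-dimensional center this depends on the choice of bi-invariant inner product (the orthogonal complement of $\mathfrak{t}\cap\mathfrak{h}$ in $\mathfrak{t}$ can be an irrational line), so either fix a standard inner product in these two cases, or simply quote Lemma~\ref{lemma-no-abel-factor} as the paper does, since these two groups are not semisimple.
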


\begin{proof} By the classification of homogeneous spheres \cite{Bo1940}\cite{MS-1943},
a homogeneous Finsler sphere $(M,F)=(G/H,F)$ with $\dim M>1$, $G=I_0(M,F)$ must be one of the following:
\begin{description}
\item{\rm (1)} $G=\mathrm{SO}(n)$ when $M=\mathrm{S}^{n-1}=\mathrm{SO}(n)/\mathrm{SO}(n-1)$ and $n>2$;
\item{\rm (2)} $G=\mathrm{U}(n)$ when $M=\mathrm{S}^{2n-1}=\mathrm{U}(n)/\mathrm{U}(n-1)$ and $n>1$;
\item{\rm (3)} $G=\mathrm{Sp}(n)$ when $M=\mathrm{S}^{4n-1}=\mathrm{Sp}(n)/\mathrm{Sp}(n-1)$ and $n>0$;
\item{\rm (4)} $G=\mathrm{Sp}(n)\mathrm{U}(1)$ when $M=\mathrm{S}^{4n-1}=\mathrm{Sp}(n)\mathrm{U}(1)/\mathrm{Sp}(n-1)\mathrm{U}(1)$ and $n>1$;
\item{\rm (5)} $G=\mathrm{Sp}(n)\mathrm{Sp}(1)$ when $M=\mathrm{S}^{4n-1}=\mathrm{Sp}(n)\mathrm{Sp}(1)/\mathrm{Sp}(n-1)\mathrm{Sp}(1)$ and $n>1$;
\item{\rm (6)} $G=\mathrm{Spin}(9)$ when $M=\mathrm{S}^{15}=\mathrm{Spin}(9)/\mathrm{Spin}(7)$.
\end{description}

Now we assume $(M,F)$ satisfies Assumption (I).
By Lemma \ref{lemma-no-abel-factor}, (2) and (4) are impossible.
The homogeneous spheres in (5) and (6) only admit reversible
$G$-invariant Finsler metrics. The discussion in Case III-B above proves (6) is impossible. By Proposition \ref{prop-Case-II}, (5) is impossible.
The homogeneous spheres in (3) belong to Case I,
i.e. $H$ is regular in $G$ for this case. It is impossible by Proposition \ref{prop-Case-I}.

The only case left is (1), in which $\mathrm{S}^{n-1}=\mathrm{SO}(n)/\mathrm{SO}(n-1)$ with
$n>2$ only admits Riemannian symmetric $\mathrm{SO}(n)$-invariant metrics.

This ends the proof of the proposition.
\end{proof}


\begin{thebibliography}{99}
\bibitem{Ad1996} J.F. Adams, Lectures on Exceptional Lie Groups,
The University of Chicago Press, Chicago and London, 1996.
\bibitem{An-1974} D.V. Anosov, Geodesics in Finsler geometry, Proc. I.C.M. (Vancouver, B.C. 1974), {\bf 2}, 293-297, Montreal, 1975 [Russian]; Amer. Math. Soc. Transl. {\bf109}
    (1977), 81-85, MR0426058, Zbl0368.53045.
\bibitem{Bo1940} A. Borel, Some remarks about Lie groups transitive on spheres and tori, Bull. Amer. Mat. Soc.,
    {\bf 55} (1940), 580-587.
\bibitem{BCS-2000} D. Bao, S.S. Chern and Z. Shen, An Introduction
to Riemann-Finsler Geometry, Springer-Verlag, New York, 2000.
\bibitem{BFIMZ} R. Bryant, P. Foulon, S. Ivanov, V. Matveev and W. Ziller, Geodesic behavior for
Finsler metrics of constant positive curvature on $S^2$, arXiv:1710.03736.
\bibitem{BL-2010} V. Bangert and Y. Long, The existence of two
closed geodesics on every Finsler manifolds, Math. Ann., {\bf346} (2) (2010), 335-366.
\bibitem{BRS-2004} D. Bao, C. Robles and Z. Shen, Zermelo navigation on Riemannian manifolds, J. Diff. Geom., {\bf 66}
    (2004), 377-435.
\bibitem{De2008} S. Deng, Fixed points of isometries of a Finsler space, Publ. Math. Debrecen, {\bf72} (2008), 469-474.
\bibitem{De2012} S. Deng, Homogeneous Finsler Spaces, Springer,
New York, 2012.
\bibitem{DH2002} S. Deng and Z. Hou, The group of isometries of
a Finsler space, Pacific J. Math., {\bf207} (2002), 149-157.
\bibitem{DL-2007} H. Duan and Y. Long, Multiple closed geodesics
on bumpy Finsler spheres, J. Diff. Equ. {\bf 233} (2007), 221-240.
\bibitem{DLW-2016} H. Duan, Y. Long and W. Wang, Two closed geodesics on compac simply connected bumpy Finsler manifolds,
    J. Diff. Geom., {\bf 104} (2) (2016), 275-289.
\bibitem{DX-2014} S. Deng and M. Xu, Clifford-Wolf translations
of Finsler spaces, Forum Math., {\bf26} (2014), 1413-1428.
\bibitem{Fe1965} A. Fet, Aperiodic problem in the calculus of variations, Dokl. Akad. nauk. SSSR (N.S.) {\bf 160} (1965),
    287-289 [Russian]; Soviet Math. {\bf 6} (1965), 85-88,
    MR0220316, Zbl0135.32604.
\bibitem{GM-1969} D. Gromoll and W. Meyer, Periodic geodesics on
compact Riemannian manifolds, J. Diff. Geom. {\bf 3} (1969),
493-510.
\bibitem{Hegalson} S. Helgason, Differential Geometry, Lie Groups, and Symmetric Spaces, Academic Press, 1978.
\bibitem{Katok-1973} A.B. Katok, Ergodic properties of
degenerate integrable Hamiltonian systems, Izv. Akad. Nauk SSSR
{\bf 37} (1973) (in Russian); Math. USSR Izv. {\bf 7} (1973)
535-571.
\bibitem{Kl-1978} W. Klingenberg, Lectures on closed geodesics,
Springer, Berlin, 1978, MR0478069, Zbl0397.58018.
\bibitem{MS-1943} D. Montgomery and H. Samelson, Transformation
groups of spheres, Ann. Math. {\bf44} (1943), 454-470.
\bibitem{MZ-1987} J. McCleary and W. Ziller, On the free loop space of homogeneous spaces, Amer. J. Math., {\bf109} (4)
    (1987), 765-781.
\bibitem{Ra-1989} H.B. Rademacher, On the average indices of closed geodesics, J. Diff. Geom., {\bf29} (1989), 65-89.
\bibitem{Ra-2007} H.B. Rademacher, Existence of closed geodesics
on positively curved Finsler manifolds, Ergod. Th. \& Dynam. Sys.
{\bf 27} (3) (2007), 957-969.
\bibitem{Ra-2010} H.B. Rademacher, The second closed geodesic on
Finsler spheres of dimension $n>2$, Trans. Amer. Math. Soc.,
{\bf 362} (3) (2010), 1413-1421.
\bibitem{Ra-2017} H.B. Rademacher, Bumpy metrics on spheres and minimal index growth, J. Fixed Point Theory Appl. {\bf 19}
    (2017), 289-298.
\bibitem{PS-1976} M.V. Poirrier and D. Sullivan,
The homology theory of the closed geodesic problem,
J. Diff. Geom. {\bf11} (1976), 633-644.
%\bibitem{Wa-2008} W. Wang, Closed geodesics on positively curved
%Finsler spheres, Adv. Math. {\bf 218} (2008), 1566-1603.
\bibitem{Wa1972} N.R. Wallach, Compact homogeneous Riemannian
manifolds with strictly positive curvature, Ann. Math., 2nd Series, {\bf96} (2) (1972), 277-295.
\bibitem{Wa2012} W. Wang, On a conjecture of Anosov, Adv. Math. {\bf 230} (2012), 1597-1617.
\bibitem{Xu-2018-1} M. Xu, The number of geometrically distinct
reversible closed geodesics on a Finsler sphere with $K\equiv1$,
arXiv:1801.08868.
\bibitem{Xu-2018-2} M. Xu, Finsler spheres with constant flag
curvature and finite orbits of prime closed geodesics,
arXiv:1804.02565.
\bibitem{XD-normal-homogeneous}
M. Xu and S. Deng, Normal homogeneous Finsler spaces,
Transform. Groups, {\bf 22} (4) (2017), 1143-1183.
\bibitem{XD-towards-odd}
M. Xu and S. Deng, Towards the classification of odd-dimensional
homogeneous reversible Finsler spaces with positive flag curvature, Ann. Mat. Pura Appl., {\bf 196} (2017), 1459-1488.
\bibitem{XDHH} M. Xu, S. Deng, L. Huang and Z. Hu,
Even dimensional homogeneous Finsler spaces with positive
flag curvature, Indiana Univ. Math. J., {\bf66} (3) (2017), 949-972.

\bibitem{XW-KVCL-Riemannian-normal} M. Xu and J. A. Wolf, Killing vector fields of constant length
on Riemannian normal homogeneous spaces, Transform. Groups, {\bf 21} (3), 871-902.
\bibitem{XZ-2017} M. Xu and W. Ziller, Reversible homogeneous Finsler metrics with positive flag curvature, Forum Math.,
    {\bf 29} (5) (2017), 1213-1226.
\bibitem{YD-2014} Z. Yan and S. Deng, Finsler spaces whose geodesics are orbits, Differential Geom. Appl., {\bf 36} (2014), 1-23.
\bibitem{Zi-1982} W. Ziller, Geometry of the Katok examples,
Ergod. Th. \& Dynam. Sys. {\bf 3} (1982), 135-137.\end{thebibliography}
\end{document}